\documentclass[12pt,preprint]{amsart}
\usepackage{amsthm,amsmath,amssymb}
\usepackage{amssymb,amsfonts,amsthm,amsmath,amscd}
\usepackage{graphics}
\usepackage{graphicx}
\usepackage{color}
\usepackage{supertabular}
\usepackage{tikz}
\usepackage{multirow}
\usepackage{enumerate}

%%%%%%%%%%%%%%%%%%%%%%%%%%%%%%%%%%%%
\newtheorem{theorem}{Theorem}
\newtheorem{lemma}{Lemma}
\newtheorem{proposition}{Proposition}
\newtheorem{corollary}{Corollary}

\theoremstyle{definition}

\newtheorem{remark}{Remark}
\newtheorem{definition}{Definition}

%%%%%%%%%%%%%%%%%%%%%%%%%%%%%%%%%%%%%%
\def\G{\mathcal{G}}
\def\P{\mathcal{P}}
\def\N{\mathcal{N}}
\def\S{\mathcal{S}}
\def\Z{\mathbb{Z}}

\newcommand{\cH}{{\mathcal{H}}}

\begin{document}

\title{On tame, pet, domestic, and miserable impartial games}

\author{Vladimir Gurvich}
\address{MSIS and RUTCOR, RBS, Rutgers University,
100 Rockafeller Road, Piscataway, NJ 08854;
National Research University Higher School of Economics (HSE), Moscow}
\email{vladimir.gurvich@rutgers.edu, vladimir.gurvich@gmail.com}

\author{Nhan Bao Ho}
\address{Department of Mathematics and Statistics, La Trobe University, Melbourne, Australia 3086}
\email{nhan.ho@latrobe.edu.au, nhanbaoho@gmail.com}

%%%%%%%%%%%%%%%%%%%%%%%%%%%%%%%%%%%%%%%%%%%%%%%%%%%%%%%
%\date{(date1), and in revised form (date2).}
\subjclass[2000]{Primary: 91A46}
\keywords{Moore's $k$-{\sc Nim}, extended complementary {\sc Nim}, impartial games, $\P$-positions, Sprague-Grundy function}

%%%%%%%%%%%%%%%%%%%%%%%%%%%%%%%%%%%%%%%%%%%%%%%%%%%%%%%
\thanks{The first author gratefully acknowledge partial support
of the Russian Academic Excellence Project `5-100'.}

\begin{abstract}
Playing impartial games under the normal and mis\`{e}re conventions may differ a lot.
However, there are also many ``exceptions" for which
the normal and mis\`{e}re plays are very similar.
As early as in 1901 Bouton noticed that this is the case with the game of {\sc Nim}.
% The first such example, the game of {\sc Nim}, was considered by Bouton as early as in 1901.
In 1976 Conway introduced a large class of such games that he called {\em tame} games.
Here we introduce a proper subclass, {\em pet} games, and
a proper superclass, {\em domestic} games.
For each of these three classes we provide an efficiently verifiable characterization
based on the following property.
These games are closely related to another important subclass of the tame games
introduced in 2007 by the first author and called {\em miserable} games.
We show that tame, pet, and domestic games turn into miserable games
by ``slight modifications" of their definitions.
We also show that the sum of miserable games is miserable and
find several other classes that respect summation.
The developed techniques allow us to prove that
very many well-known impartial games fall into classes mentioned above.
Such examples include all subtraction games, which are pet;
game {\sc Euclid}, which is miserable (and, hence, tame),
as well as many versions of the {\sc Wythoff} game and {\sc Nim}, which
may be miserable, pet, or domestic.
%{\bf Key words:} sums of impartial games, mis\`{e}re play, tame, pet, domestic, miserable.
%{\bf Key words:}impartial games, normal and mis\`{e}re play, sums of impartial games, Sprague-Grundy function, tame, pet, domestic, miserable.
\end{abstract}

\maketitle

%%%%%%%%%%%%%%%%%%%%%%%%%%%%%%%%%%%%%%%%%%%%%%%%%%%%%%%%%%%%%%%%%%%%%%%%%%%%%%%%%%%%%%%%%
%%%%%%%%%%%%%%%%%%%%%%%%%%%%%%%%%%%%%%%%%%%%%%%%%%%%%%%%%%%%%%%%%%%%%%%%%%%%%%%%%%%%%%%%%
%%%%%%%%%%%%%%%%%%%%%%%%%%%%%%%%%%%%%%%%%%%%%%%%%%%%%%%%%%%%%%%%%%%%%%%%%%%%%%%%%%%%%%%%%

\section{Sprague-Grundy theory of impartial games}
\label{S.Game}
%A two-player {\it combinatorial game} is a game without hidden information and element of luck. The two players move alternately, %following some given rules. There is no passing or cooperating. There are two conventions of winning. Under normal convention, the %player who makes the last move wins. Otherwise, we have mis\`{e}re convention. Comprehensive theory on combinatorial games can be %found in \cite{ANW07, BCG01-04, Con76, CGT}.

Combinatorial games were analyzed in the comprehensive books \cite{BCG01-04} and
earlier in \cite{Con76}; an introductory theory can be found in \cite{ANW07, CGT}.
Readers familiar with the subject can skip this section.
We restrict ourselves to a special case. A game is called
\begin{itemize}
\item{} {\em impartial} if both players have the same possible moves in each position;
\item{} {\em acyclic} if each position can be visited at most once;
\item{} {\em finite} if the set of positions is finite;
\item{} {\em locally finite} if the subgame defined by any fixed initial position is finite.
\end{itemize}

In this paper we consider only the locally finite acyclic impartial games of two players,
calling them simply {\it games}, for brevity.
We say that a game is played under the {\em normal} (resp.,~{\em mis\`ere}) convention
if the player who makes the last move wins (resp.,~loses). We will consider both.

\medskip

%\subsection{Modeling games by directed graphs}
Games are modeled by finite acyclic digraphs
whose vertices are interpreted as {\em positions} and arcs as {\em moves}.
In case there exists a move  $(x, y)$ from a position  $x$ to $y$, we write
$x \rightarrow y$ and say that  $x$ is {\it movable} to  $y$, or
$y$ is {\it reachable} from  $x$, or  $y$ is an {\it option} of $x$.

Similarly, given two sets of positions $U, W \subseteq V$,  we say that
$U$  is {\it movable} to  $W$  or that
$W$  is {\it reachable} from  $U$,  if
from every position $x \in U$  there is a move to some position $y \in W$.

A position without available moves is called {\em terminal}. The set of terminal positions is denoted by  $V_T$. A position is called an $\N$-position (resp.,~a $\P$-position) if the next (resp.,~previous) player wins when both players play optimally starting from that position.

Given a set  $S$ of non-negative integers,
the {\it minimum excludant}  of  $S$, denoted by $\operatorname{mex}(S)$,
defined as the least non-negative integer that is not in $S$.
In particular,  $\operatorname{mex}(S) = 0$  whenever  $0 \not \in S$,
for example, if  $S = \emptyset$.

The {\it Sprague-Grundy (SG) function} of a game $G$,
denoted by $\G$,  is defined recursively as follows
%: $\G(x) = 0$ for any terminal position $x$; otherwise
\begin{align} \label{R.SG}
\G(x) = mex\{\G(y) \mid \text{ $y$  is an option of  $x$}\}.
\end{align}
The value  $\G(x)$  is called the {\it SG value},
or alternatively the {\it nim-value}, of position $x$.
By the above definition, $\G(x) = 0$ whenever  $x$  is a terminal position.
It is both obvious and well-known \cite{Spr35, Spr37, Gru39} that
the SG values are characterized as follow.

%%%%%%%%%%%%%%%%%%%%%%%%%%%%%%%%%%%%%%%%%%%%%%%%%%%%%%%%%%%%%%%%%%%%%%%%%%%%%%%%%%%%%%%%%
\begin{lemma}\label{SG}
We have  $\G(x) = n$  if and only if the next two conditions hold:
\begin{enumerate} \itemsep0em
\item [\rm{(i)}] $\G(y) \neq \G(x)$  whenever there is a move from  $x$ to $y$, in particular, $\G(y) \neq n$  if  $\G(x)= n$;
\item [\rm{(ii)}] for each integer $k$  such that $0 \leq k < n$ there exists a move $x \rightarrow y$  such that  $\G(y) = k$.
\end{enumerate}
In particular, the $\P$-positions are exactly the zeros of the SG function.
\qed
\end{lemma}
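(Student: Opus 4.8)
The plan is to read both conditions directly off the definition of $\operatorname{mex}$, and then to settle the $\P$-position statement separately by a well-founded induction along the digraph.

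First I would abbreviate $S = \{\G(y) \mid y \text{ is an option of } x\}$, so that \eqref{R.SG} reads $\G(x) = \operatorname{mex}(S)$. The minimum excludant has exactly two defining features: $\operatorname{mex}(S) \notin S$, and every non-negative integer strictly below $\operatorname{mex}(S)$ lies in $S$. For the forward implication, assume $\G(x) = n$. The first feature gives $n \notin S$, i.e.\ no option $y$ has $\G(y) = n$; since $\G(x) = n$ this is precisely (i). The second feature gives $\{0, 1, \dots, n-1\} \subseteq S$, i.e.\ each $k < n$ is realized as $\G(y)$ for some option $y$, which is (ii). For the converse, I would assume (i) and (ii): condition (ii) forces $\{0, \dots, n-1\} \subseteq S$, while the ``in particular'' clause of (i) forces $n \notin S$, and together these say exactly that $n$ is the least non-negative integer missing from $S$, so $\operatorname{mex}(S) = n = \G(x)$. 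Thus the stated equivalence is nothing more than unfolding $\operatorname{mex}$.

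For the last assertion I would prove that $\G(x) = 0$ holds if and only if $x$ is a $\P$-position, arguing by induction along the digraph. This is legitimate precisely because the game is acyclic and locally finite, so the reachability relation is well-founded and the labels $\N$ and $\P$ are well-defined through the usual normal-play recursion (a position is a $\P$-position iff every option is an $\N$-position, and an $\N$-position iff some option is a $\P$-position, with terminal positions being $\P$-positions). The base case is a terminal $x$: here $\G(x) = 0$ by the empty $\operatorname{mex}$, and $x$ is a $\P$-position since the mover cannot move. For the step, if $\G(x) = 0$ then by (i) no option has SG value $0$, so by the inductive hypothesis every option is an $\N$-position and hence $x$ is a $\P$-position; if $\G(x) = n > 0$ then by (ii) applied with $k = 0$ some option $y$ has $\G(y) = 0$, which is a $\P$-position by induction, so the mover can move to it and $x$ is an $\N$-position.

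I do not expect a genuine obstacle here: the characterization $(\mathrm{i})+(\mathrm{ii})$ is a mechanical restatement of the definition of $\operatorname{mex}$. The only point that demands a little care is the $\P$-position step, where one must keep the normal-play recursion and its well-foundedness explicit, since it is exactly the acyclic and locally finite hypotheses fixed at the start of the section that license both the recursive definition \eqref{R.SG} and the induction identifying the zeros of $\G$ with the $\P$-positions.
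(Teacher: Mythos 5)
Your proposal is correct. The paper itself gives no proof of this lemma --- it is stated with a \qed and attributed to Sprague and Grundy as ``obvious and well-known'' --- so there is nothing to compare against; your argument (unfolding the two defining properties of $\operatorname{mex}$ for the equivalence, then a well-founded induction along the acyclic, locally finite digraph to identify the zeros of $\G$ with the $\P$-positions) is exactly the standard argument the paper implicitly relies on, and both halves are carried out soundly.
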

%%%%%%%%%%%%%%%%%%%%%%%%%%%%%%%%%%%%%%%%%%%%%%%%%%%%%%%%%%%%%%%%%%%%%%%%%%%%%%%%%%%%%%%%%

Given two games $G$ and $H$, their disjunctive sum $G+H$ is defined as
a game in which every move consists of choosing one game and making a move in it.
The SG function of the sum is characterized by the following well-known statement.
Let $\oplus$  be the bitwise addition in the binary number system without carrying,
or in other words, the bitwise $\bmod 2$ addition.

\begin{theorem} \cite{Gru39, Spr35, Spr37}
\label{S.SG}
The SG value of the position $(x, y)$  in the sum $G + H$  is $\G(x) \oplus \G(y)$.
\qed
\end{theorem}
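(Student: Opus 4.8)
The plan is to verify, by well-founded induction on the position $(x,y)$ along the (acyclic) move relation of $G+H$, that the candidate value $n := \G(x) \oplus \G(y)$ satisfies the two defining conditions of Lemma~\ref{SG}; the characterization then forces $\G(x,y) = n$. Writing $a = \G(x)$ and $b = \G(y)$, I recall that every option of $(x,y)$ in the sum has the form $(x',y)$ with $x \to x'$ a move in $G$, or $(x,y')$ with $y \to y'$ a move in $H$. By the inductive hypothesis these options carry SG values $\G(x') \oplus b$ and $a \oplus \G(y')$, respectively.

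For part (i) of Lemma~\ref{SG} I must show that no option of $(x,y)$ has SG value $n = a \oplus b$. If $x \to x'$, then part (i) of Lemma~\ref{SG} applied inside $G$ gives $\G(x') \neq a$; since XOR with $b$ is injective, $\G(x') \oplus b \neq a \oplus b = n$. The case $y \to y'$ is symmetric, using $\G(y') \neq b$.

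The substantive part is (ii): given any $k$ with $0 \le k < n$, I must exhibit an option of $(x,y)$ whose SG value is exactly $k$. Here the idea is a bit-manipulation argument. Let $p$ be the position of the most significant bit in which $k$ and $n$ differ; since $k < n$, that bit is $1$ in $n$ and $0$ in $k$. As this bit of $n = a \oplus b$ equals $1$, it is set in exactly one of $a,b$, say (without loss of generality) in $a$. I then set $a' := k \oplus b$ and claim $a' < a$: indeed $a' \oplus a = (k \oplus b)\oplus(a \oplus b) = k \oplus n$, which vanishes in all bits above $p$ and equals $1$ in bit $p$, so $a'$ agrees with $a$ above bit $p$ while at bit $p$ it carries a $0$ (as $a$ carries a $1$ there), whence $a' < a$. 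Since $a' < a = \G(x)$, part (ii) of Lemma~\ref{SG} inside $G$ produces a move $x \to x'$ with $\G(x') = a'$, and the resulting option $(x',y)$ has SG value $\G(x') \oplus b = (k \oplus b) \oplus b = k$, as desired. The symmetric subcase, in which the distinguished bit is set in $b$, is handled by a move in $H$.

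The main obstacle is precisely this last step: one must guarantee that the chosen component can have its SG value genuinely \emph{lowered} to the target $a'$ (or $b'$), since part (ii) of Lemma~\ref{SG} only supplies options of \emph{smaller} value. Selecting the most significant differing bit is exactly what secures the inequality $a' < a$; a more naive rule for deciding which summand to move in would not, in general, yield a decrease, and the argument would collapse.
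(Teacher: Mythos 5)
Your proof is correct and complete. Note that the paper itself offers no proof of Theorem~\ref{S.SG}: it is stated as a classical result with a citation to Sprague and Grundy, so there is no argument of the paper to diverge from, and what you have written is precisely the standard classical proof --- well-founded induction along the move relation of $G+H$, verifying both conditions of the mex characterization (Lemma~\ref{SG}) for the candidate value $\G(x)\oplus\G(y)$. You also correctly isolated and resolved the only delicate point, namely that Lemma~\ref{SG}(ii) supplies only options of \emph{strictly smaller} SG value within a summand, which is exactly what your choice of the most significant bit in which $k$ differs from $n=\G(x)\oplus\G(y)$ guarantees.
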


This result can be obviously extended to the sums of  $k$  games for any integer  $k \geq 2$, since
$\oplus$  is an associative and commutative operation.

\medskip

The mis\`{e}re SG value $\G^-(x)$  of a position $x$ in a game  $G$
is defined by the same recursion (\ref{R.SG}), but the initialization is different:
$\G^-(x) = 1$  (rather than $\G^-(x) = 0$) for all terminal positions $x \in V_T$.

\begin{remark}
\label{normal-misere}
For an individual game, the mis\`{e}re version can be easily reduced to the normal one
by the following simple transformation of the graph  $G = (V,E)$.
Add to  $V$  one new position  $x_T$  and an arc
$(x,x_T)$ from each former terminal position  $x \in V_T$ to $x_T$.
Thus, $x_T$ becomes a unique terminal in the obtained graph  $G^-$.
It is easy to verify that for every position  $x \in V$
its mis\`{e}re SG value  $\G^-(x)$,
in the original digraph  $G$, equals the normal SG value $\G(x)$ the extended digraph $G^-$.

However, then the mis\`{e}re version of a sum % of  $k$ games
and the sum of the mis\`{e}re versions of the summands
are not the same.
In the first case we add only one new
terminal position for the whole sum, while
in the second case we have to add one for each game summand.

As early as in 1956 Grundy and Smith  \cite{GS56} noticed that
playing a game under the mis\`{e}re convention may be difficult in general.
In this paper we focus on the exceptions, that is, on the games
for which functions  $\G$ and $\G^-$ are closely related.
 \end{remark}

%%%%%%%%%%%%%%%%%%%%%%%%%%%%%%%%%%%%%%%%%%%%%%%%%%%%%%%%%%%%%%%%%%%%%%%%%%%%%
%%%%%%%%%%%%%%%%%%%%%%%%%%%%%%%%%%%%%%%%%%%%%%%%%%%%%%%%%%%%%%%%%%%%%%%%%%%%%
%%%%%%%%%%%%%%%%%%%%%%%%%%%%%%%%%%%%%%%%%%%%%%%%%%%%%%%%%%%%%%%%%%%%%%%%%%%%%
%%%%%%%%%%%%%%%%%%%%%%%%%%%%%%%%%%%%%%%%%%%%%%%%%%%%%%%%%%%%%%%%%%%%%%%%%%%%%%

\section{Main concepts and results}
\label{S.Intro}

A position $x$  will be called an {\em $i$-position}
(resp.,~an $(i,j)$-position) if  $\G(x) = i$
(resp.,~if  $\G(x) = i$  and  $\G^-(x) = j$).
We will denote by $V_i$
(resp.,~$V_{i,j}$) the set of $i$-positions (resp.,~$(i,j)$-positions).
%Be definition, every terminal position is a $(0,1)$-position.
A position $x \in V_{0,1} \cup V_{1,0}$  will be called a {\em swap position}.

\begin{definition} \label{D.DTP}
An impartial game will be called
\begin{enumerate} [(i)] \itemsep0em
\item {\em domestic} if it has neither $(0,k)$-positions nor $(k,0)$-positions with $k \geq 2$;
\item {\em tame} if it has only $(0,1)$-positions, $(1,0)$-positions, and $(k,k)$-positions with $k \geq 0$;
\item {\em pet} if it has only $(0,1)$-positions, $(1,0)$-positions, and $(k,k)$-positions with $k \geq 2$.
\end{enumerate}
\end{definition}

Tame games were introduced in \cite[Chapter 12]{Con76}
(see page 178), pet games were introduced recently in the preprints \cite{Gur11, Gur120},
while domestic games are introduced in this paper.
According to the above definitions, domestic, tame, and pet games form nested classes:
a pet game is tame and a tame game is domestic.
Furthermore, both containments are strict.
Figures \ref{nD}, \ref{DnT}, \ref{TnP}, and \ref{Pet} distinguish these three classes.

\begin{figure}[ht]
\begin{center}
\begin{tikzpicture}[style={draw=blue,thick,circle,inner sep=0pt}] % [>=latex]
 \node at (0in,0in) [shape=circle,minimum size=.6cm,draw=black,thick] (A) {0,1};
 \node at (.75in,0in) [shape=circle,minimum size=5pt,draw=black,thick] (B) {1,0};
 \node at (1.5in,0in) [shape=circle,minimum size=5pt,draw=black,thick] (C) {2,2};
 \node at (2.25in,0in) [shape=circle,minimum size=5pt,draw=black,thick] (D) {0,0};
 \node at (3in,0in) [shape=circle,minimum size=5pt,draw=black,thick] (E) {1,1};
 \node at (3.75in,0in) [shape=circle,minimum size=5pt,draw=black,thick] (F) {2,0};
 \node at (4.5in,0in) [shape=circle,minimum size=5pt,draw=black,thick] (G) {3,2};
\draw [->] (B) to (A);
\draw [->] (C) to (B);
\draw [->] (C) to [out=155,in=25] (A);
\draw [->] (D) to (C);
\draw [->] (E) to (D);
\draw [->] (F) to (E);
\draw [->] (F) to [out=150,in=30] (A);
\draw [->] (G) to (F);
\draw [->] (G) to [out=160,in=20] (E);
\draw [->] (G) to [out=155,in=25] (D);
\end{tikzpicture}
\end{center}
\caption{\label{nD}
This game is not domestic since it contains a (2,0)-position.}
\end{figure}
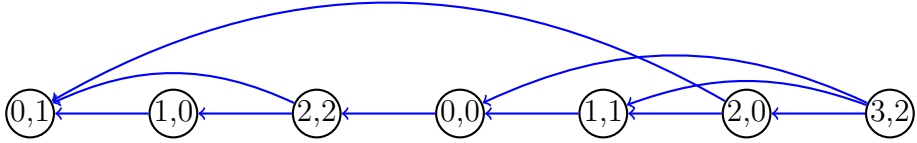

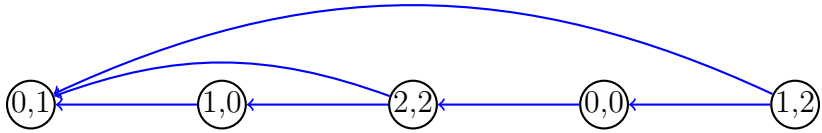
\begin{figure}[ht]
\begin{center}
\begin{tikzpicture}[style={draw=blue,thick,circle,inner sep=0pt}]   %[>=latex]
 \node at (0in,0in) [shape=circle,minimum size=.6cm,draw=black,thick] (A) {0,1};
 \node at (1in,0in) [shape=circle,minimum size=10pt,draw=black,thick] (B) {1,0};
 \node at (2in,0in) [shape=circle,minimum size=10pt,draw=black,thick] (C) {2,2};
 \node at (3in,0in) [shape=circle,minimum size=10pt,draw=black,thick] (D) {0,0};
 \node at (4in,0in) [shape=circle,minimum size=10pt,draw=black,thick] (E) {1,2};
\draw [->] (D) to (C);
\draw [->] (C) to (B);
\draw [->] (B) to (A);
\draw [->] (C) to [out=160,in=20] (A);
\draw [->] (E) to (D);
\draw [->] (E) to [out=155,in=25] (A);
\end{tikzpicture}
\end{center}
\caption{\label{DnT}
This game is domestic but not tame since it contains a (1,2)-position.}
\end{figure}

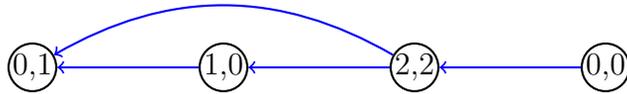
\begin{figure}[ht]
\begin{center}
\begin{tikzpicture}[style={draw=blue,thick,circle,inner sep=0pt}] %[>=latex]
 \node at (0in,0in) [shape=circle,minimum size=.6cm,draw=black,thick] (A) {0,1};
 \node at (1in,0in) [shape=circle,minimum size=10pt,draw=black,thick] (B) {1,0};
 \node at (2in,0in) [shape=circle,minimum size=10pt,draw=black,thick] (C) {2,2};
 \node at (3in,0in) [shape=circle,minimum size=10pt,draw=black,thick] (D) {0,0};
\draw [->] (D) to (C);
\draw [->] (C) to (B);
\draw [->] (B) to (A);
\draw [->] (C) to [out=150,in=30] (A);
\end{tikzpicture}
\end{center}
\caption{\label{TnP}   % M.nS
This game is tame but not pet since it contains a (0,0)-position;
also it is miserable but not strongly miserable.}
\end{figure}

\begin{figure}[ht]
\begin{center}
\begin{tikzpicture}[style={draw=blue,thick,circle,inner sep=0pt}] %[>=latex]
 \node at (0in,0in) [shape=circle,minimum size=.6cm,draw=black,thick] (A) {0,1};
 \node at (1in,0in) [shape=circle,minimum size=10pt,draw=black,thick] (B) {1,0};
 \node at (2in,0in) [shape=circle,minimum size=10pt,draw=black,thick] (C) {2,2};
\draw [->] (C) to (B);
\draw [->] (B) to (A);
\draw [->] (C) to [out=150,in=30] (A);
\end{tikzpicture}
\end{center}
\caption{\label{Pet}
This game is pet.}
\end{figure}
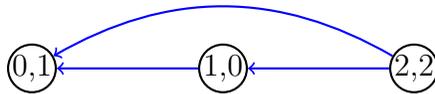

%%%%%%%%%%%%%%%%%%%%%%%%%%%%%%%%%%%%%%%%%%%%%%%%%%%%%%%
The following two ``technical" properties
appear to be closely related to the above three classes of games.
% Later, in a slightly modified form, they will be instrumental in obtaining efficient membership tests.

\begin{definition} \label{D.FR} \label{d1}
A game  $G$  is said to be
\begin{enumerate} [(i)] \itemsep0em
\item {\em forced} if each move from a $(0,1)$-position results in a $(1,0)$-position and vice versa;
\item {\em returnable} if the following, weaker, implications hold:
let $x$ be a $(0,1)$-position (resp.,~a $(1,0)$-position) movable to a non-terminal position $y$, then
$y$ is movable to a $(0,1)$-position (resp.,~to a $(1,0)$-position).
\end{enumerate}
\end{definition}

%%%%%%%%%%%%%%%%%%%%%%%%%%%%%%%%%%%%%%%%%%%%%%%%%%%%%%%
Obviously, the forced games are returnable.
Figures \ref{nR} and  \ref{RnF}  give examples of a non-returnable game and
a returnable game that is not forced, respectively.
Note that the games in Figures \ref{nD}, \ref{DnT}, \ref{TnP}, and \ref{Pet} are all forced.

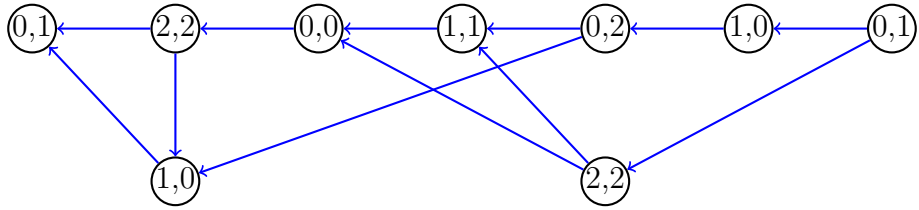
\begin{figure}[ht]
\begin{center}
\begin{tikzpicture}[style={draw=blue,thick,circle,inner sep=0pt}] %[>=latex]
 \node at (0in,0in) [shape=circle,minimum size=.6cm,draw=black,thick] (A) {0,1};
 \node at (.75in,-.8in) [shape=circle,minimum size=5pt,draw=black,thick] (B) {1,0};
 \node at (.75in,0in) [shape=circle,minimum size=5pt,draw=black,thick] (C) {2,2};
 \node at (1.5in,0in) [shape=circle,minimum size=5pt,draw=black,thick] (D) {0,0};
 \node at (2.25in,0in) [shape=circle,minimum size=5pt,draw=black,thick] (E) {1,1};
 \node at (3in,0in) [shape=circle,minimum size=5pt,draw=black,thick] (F) {0,2};
 \node at (3.75in,0in) [shape=circle,minimum size=5pt,draw=black,thick] (G) {1,0};
 \node at (3in,-.8in) [shape=circle,minimum size=5pt,draw=black,thick] (H) {2,2};
 \node at (4.5in,0in) [shape=circle,minimum size=5pt,draw=black,thick] (I) {0,1};
\draw [->] (D) to (C);
\draw [->] (C) to (B);
\draw [->] (B) to (A);
\draw [->] (C) to (A);
\draw [->] (E) to (D);
\draw [->] (F) to (E);
\draw [->] (F) to (B);
\draw [->] (G) to (F);
\draw [->] (H) to (E);
\draw [->] (H) to (D);
\draw [->] (I) to (H);
\draw [->] (I) to (G);
\end{tikzpicture}
\end{center}
\caption{\label{nR}
This game is not returnable.}
\end{figure}

\begin{figure}[ht]
\begin{center}
\begin{tikzpicture}[style={draw=blue,thick,circle,inner sep=0pt}] %[>=latex]
 \node at (0in,0in) [shape=circle,minimum size=.6cm,draw=black,thick] (A) {0,1};
 \node at (.75in,0in) [shape=circle,minimum size=5pt,draw=black,thick] (B) {1,0};
 \node at (1.5in,0in) [shape=circle,minimum size=5pt,draw=black,thick] (C) {2,2};
 \node at (2.25in,0in) [shape=circle,minimum size=5pt,draw=black,thick] (D) {0,0};
 \node at (3in,0in) [shape=circle,minimum size=5pt,draw=black,thick] (E) {1,1};
 \node at (3.75in,0in) [shape=circle,minimum size=5pt,draw=black,thick] (F) {2,0};
 \node at (4.5in,0in) [shape=circle,minimum size=5pt,draw=black,thick] (G) {3,2};
 \node at (4.125in,.6in) [shape=circle,minimum size=5pt,draw=black,thick] (H) {0,1};
\draw [->] (B) to (A);
\draw [->] (C) to (B);
\draw [->] (C) to [out=155,in=25] (A);
\draw [->] (D) to (C);
\draw [->] (E) to (D);
\draw [->] (F) to (E);
\draw [->] (F) to [out=150,in=30] (A);
\draw [->] (G) to (F);
\draw [->] (G) to [out=160,in=20] (E);
\draw [->] (G) to [out=150,in=30] (D);
\draw [->] (G) to (H);
\draw [->] (H) to (F);
\end{tikzpicture}
\end{center}
\caption{\label{RnF}
This game is returnable but not forced.}
\end{figure}
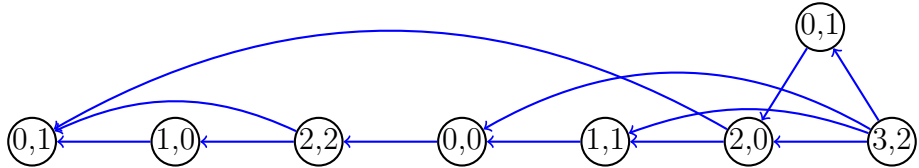

%%%%%%%%%%%%%%%%%%%%%%%%%%%%%%%%%%%%%%%%%%%%%%%%%%%%%%%
\begin{definition}
\label{D.WTMS}
For each position  $x$  let us consider the following properties:
\begin{itemize} \itemsep0em
\item [$(\mathfrak{a})$]   $x$ is a swap position, $x \in V_{0,1} \cup V_{1,0}$;
\item [$(\mathfrak{a_0})$] $x \in V_{0,1} \cup V_{1,0} \cup V_{0,0} \cup V_{1,1}$;
\item [$(\mathfrak{b})$]   $x$ is not movable to $V_{0,1} \cup V_{1,0}$;
\item [$(\mathfrak{c})$]   $x$ is movable to $V_{0,1}$ and to $V_{1,0}$ simultaneously;
\item [$(\mathfrak{c_0})$] $x$ is movable to $V_{0,1}$ and to $V_{0,0}$ simultaneously;
\item [$(\mathfrak{c_1})$] $x$ is movable to $V_{1,0}$ and to $V_{0,0}$ simultaneously;
\item [$(\mathfrak{e})$]   $x$ is movable to $V_{0,0}$ and to $V_{1,1}$ simultaneously.
\end{itemize}

A game is called
\begin{enumerate} [(i)] \itemsep0em
\item  {\em strongly miserable} if either $(\mathfrak{a})$ or $(\mathfrak{c})$ hold for every position;
\item  {\em miserable} if $(\mathfrak{a})$, or $(\mathfrak{b})$, or $(\mathfrak{c})$ hold for every position;
\item  {\em $t$-miserable} if ($\mathfrak{a_0}$), %%% or  $(\mathfrak{b_0})$,
    or $(\mathfrak{c})$, or $(\mathfrak{e})$ hold for every position;
\item  {\em weakly miserable} if  $(\mathfrak{a})$, or  $(\mathfrak{b})$, or  $(\mathfrak{c})$, or  $(\mathfrak{c_0})$, or $(\mathfrak{c_1})$ hold for every position.
\end{enumerate}
\end{definition}

%%%%%%%%%%%%%%%%%%%%%%%%%%%%%%%%%%%%%%%%%%%%%%%%%%%%%%%

The classes of miserable and strongly miserable games were introduced in
\cite{Gur07} and \cite{Gur11, Gur120}, respectively.
It is not difficult to verify that four classes of Definition \ref{D.WTMS} are nested.
Furthermore, three examples in Figures \ref{DnT}, \ref{TnP}, and \ref{TnM}  show
that the containments are strict.

% Figure \ref{DnT}  presents a weakly miserable game that is not $t$-miserable, since none of the properties $\{(\mathfrak{a}), (\mathfrak{b}), (\mathfrak{c})\}$ holds for the initial position; the game in Figure \ref{TnM} is $t$-miserable but not miserable; finally, the same Figure \ref{TnP} presents also a miserable game that is not strongly miserable, since neither  $(\mathfrak{a})$  nor $(\mathfrak{c})$  holds for the initial position.
%%%%%%%%%%%%%%%%%%%%%%%%%%%%%%%%%%%%%%%%%%%%%%%%%%%%%%%
%It was shown in \cite{Gur07, Gur11} that a game is tame (resp.,~pet) if it is miserable (resp.,~strongly miserable).
%Several other equivalent characterizations of the pet games were also suggested in \cite{Gur120}.
% Here we strengthen these results.

\medskip

The paper is organized as follows.
In Section  \ref{S.Nested} we show that a game is domestic, tame, or pet if and only if
it is weakly miserable, $t$-miserable, or strongly miserable, respectively.

Let us note, however, that these effective characterizations in terms of ``miserability"
still do not provide efficient membership tests,
because verifying properties of Definition \ref{D.WTMS}
requires knowledge of sets $V_{0,1}$, $V_{1,0}$, $V_{0,0}$, and $V_{1,1}$
that are defined recursively.
In Section \ref{S.Equi} we reformulate ``slightly" these properties
to obtain an efficient way to verify the membership in the classes
of domestic, tame, and pet games.

\medskip

We say that a class of games is preserved under summation
if the sum of games from this class also belongs to it.
In Section \ref{S.Sum}  we prove that the classes of
tame games, miserable games, forced and miserable games, returnable and miserable games
are preserved under summation, while pet and domestic games are not.
For tame games the result was stated in \cite{Con76} and proved in \cite{CGT};
we provide a simpler proof.

\smallskip

In Section \ref{S.App} we apply the results of Section \ref{S.Equi}
for several well-known classes of games, including
{\sc Subtraction games}, {\sc Euclid}, {\sc Nim}, {\sc Wythoff},
as well as for several modifications and generalizations of these games.

%%%%%%%%%%%%%%%%%%%%%%%%%%%%%%%%%%%%%%%%%%%%%%%%%%%%%%%%%%%%%%%%%%%%%%%%%%%%%%%%%%%%%%%%%
%%%%%%%%%%%%%%%%%%%%%%%%%%%%%%%%%%%%%%%%%%%%%%%%%%%%%%%%%%%%%%%%%%%%%%%%%%%%%%%%%%%%%%%%%
%%%%%%%%%%%%%%%%%%%%%%%%%%%%%%%%%%%%%%%%%%%%%%%%%%%%%%%%%%%%%%%%%%%%%%%%%%%%%%%%%%%%%%%%%
%%%%%%%%%%%%%%%%%%%%%%%%%%%%%%%%%%%%%%%%%%%%%%%%%%%%%%%%%%%%%%%%%%%%%%%%%%%%%%%%%%%%%%%%%

\section{Containment and equalities}
\label{S.Nested}
%%%%%%%%%%%%%%%%%%%%%%%%%%%%%%%%%%%%%%%%%%%%%%%%%%%%%%%%%%%%%%%%%%%%%%%%%%%%

\subsection{Summary}

%%%%%%%%%%%%%%%%%%%%%%%%%%%%%%%%%%%%%%%%%%%%%%%%%%%%%%%%%%%%%%%%%%%%%%%%%%%%%%%%%%%%%%%%%

The following classes of games are shown to be identical:
\begin{itemize} \itemsep0em
\item domestic games and weakly miserable games;
\item tame games and $t$-miserable games;
\item pet games and strongly miserable games.
\end{itemize}
Furthermore, the following strict containments hold:

\begin{itemize} \itemsep0em
\item the pet (strongly miserable) games are miserable and the latter are tame.
\end{itemize}

We illustrate relations between the six considered classes by the diagram in Figure \ref{Inc}.

\begin{figure}[ht]
\begin{center}
\setlength{\unitlength}{1cm}
\begin{picture}(8, 5)(-3,.6)
\put(.5,2.8){\color{red}\oval(8,  1)}
\put(.5,2.6){\color{green}\oval(9,  1.7)}
\put(.5,2.4){\color{blue}\oval(10,  2.4)}
\put(.5,2.2){\color{gray}\oval(11,  3.1)}

\put(-2.8,2.6){\color{red}{strongly miserable games = pet games}}
\put(-1,  1.9){\color{green}{miserable games}}
\put(-2,  1.3){\color{blue}{tame games = $t$-miserable games}}
\put(-3,  .7){domestic games = weakly miserable games}

\put(.5,3.8){\color{gray}\oval(7, 1.5)}
\put(-1,4){\color{gray}{forced games}}

\put(.5,3.8){\color{cyan}\oval(8.5, 3.1)}
\put(-1.3,4.8){\color{cyan}{returnable games}}

\end{picture}
\end{center}
\caption{The diagram of containments.} % inclusive relationship.}
\label{Inc}
\end{figure}
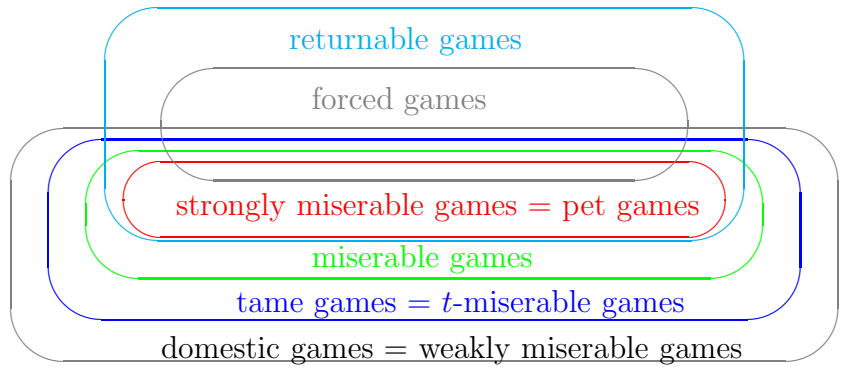

%%%%%%%%%%%%%%%%%%%%%%%%%%%%%%%%%%%%%%%%%%%%%%%%%%%%%%%%%%%%%%%%%%%%%%%%%%%%%%%%%%%%%%%%%
The following concept will be instrumental.
Given a position $x$ of a game $G$, we denote by $d(x)$ the greatest number of successive
moves from  $x$  to the terminal position.
Let us denote by  $G_{[x]}$ the subgame of $G$  defined by the initial position  $x$.
Obviously, $G_{[x]}$ contains  $x$  and all positions that can be reached from  $x$
(by one or several moves; recall that this set is finite) and all arcs between these positions.
%%%%%%%%%%%%%%%%%%%%%%%%%%%%%%%%%%%%%%%%%%%%%%%%%%%%%%%%%%%%%%%%%%%%%%%%%%%%%%%%%%%%%%%%%

\subsection{Domestic games and weakly miserable games coincide}

\begin{lemma}\label{V01V10}
In a domestic game, from each $(1,0)$-position there is a move
to a $(0,1)$-position and from each non-terminal $(0,1)$-position there is a move to a $(1,0)$-position.
\end{lemma}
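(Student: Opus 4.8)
The plan is to read off both conclusions directly from the mex-characterization of Lemma \ref{SG}, applied once to the normal function $\G$ and once to the mis\`ere function $\G^-$, and then to invoke the defining constraint of domesticity to pin down the missing coordinate. The key point to keep in mind is that $\G^-$ is defined by the same recursion (\ref{R.SG}) as $\G$, and hence obeys the same characterization (conditions (i) and (ii) of Lemma \ref{SG}) at every \emph{non-terminal} position; the two functions differ only in their value on $V_T$, where $\G = 0$ while $\G^- = 1$. I would state this compatibility explicitly at the outset, since it is what lets me mix the two SG functions in a single mex computation.

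First I would treat a $(1,0)$-position $x$, so $\G(x) = 1$ and $\G^-(x) = 0$. Note that $x$ is automatically non-terminal, since every terminal position has $\G^- = 1 \neq 0$, so the mex characterization of $\G^-$ may be applied at $x$. From $\G(x) = 1$ and part (ii) of Lemma \ref{SG} there is an option $y$ with $\G(y) = 0$. From $\G^-(x) = 0$ and part (i) (applied to $\G^-$) no option of $x$ has mis\`ere value $0$, so in particular $\G^-(y) \geq 1$. Now domesticity forbids $(0,k)$-positions with $k \geq 2$; since $\G(y) = 0$ this forces $\G^-(y) \leq 1$, hence $\G^-(y) = 1$ and $y \in V_{0,1}$, as required.

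The second assertion is perfectly symmetric, interchanging the roles of $\G$ and $\G^-$. Let $x$ be a non-terminal $(0,1)$-position, so $\G(x) = 0$ and $\G^-(x) = 1$. From $\G^-(x) = 1$ and part (ii) there is an option $y$ with $\G^-(y) = 0$, while $\G(x) = 0$ together with part (i) (and the assumed non-terminality of $x$) gives that no option has normal value $0$, so $\G(y) \geq 1$. Domesticity now forbids $(k,0)$-positions with $k \geq 2$; since $\G^-(y) = 0$ this forces $\G(y) \leq 1$, hence $\G(y) = 1$ and $y \in V_{1,0}$.

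I do not expect a genuine obstacle here: the argument is a two-line mex computation in each direction. The only points requiring a little care are (a) checking that the relevant position $x$ is non-terminal before invoking the mex characterization of $\G^-$ at $x$ --- this is immediate in the first case and assumed in the second --- and (b) using the two halves of the domesticity hypothesis in the correct direction, namely the implication $\G(y)=0 \Rightarrow \G^-(y) \leq 1$ in the first part and $\G^-(y)=0 \Rightarrow \G(y) \leq 1$ in the second.
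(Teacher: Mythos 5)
Your proof is correct and is essentially the paper's own argument: both apply part (ii) of the mex characterization (Lemma \ref{SG}) to produce a move to a position with normal (resp.,~mis\`ere) value $0$, use part (i) for the other SG function to rule out the value $0$ in the remaining coordinate, and then invoke domesticity to force that coordinate to equal $1$. Your write-up is in fact slightly more careful than the paper's, since you make explicit the non-terminality checks and the fact that $\G^-$ obeys the same mex characterization at non-terminal positions, both of which the paper leaves implicit.
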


\begin{proof}
From each $(1,0)$-position (resp.,~non-terminal $(0,1)$-position) there is a move to a $(0,k)$-position
(resp.,~$(k,0)$-position); obviously,  $k \neq 0$  (resp.,~$k \neq 1$).
Furthermore, $k \leq 1$ since the game is domestic.
%Since the game is domestic, we have $k \leq 1$. By the SG theory, $k \neq 0$ and so the lemma holds.
\end{proof}

%%%%%%%%%%%%%%%%%%%%%%%%%%%%%%%%%%%%%%%%%%%%%%%%%%%%%%%%%%%%%%%%%%%%%%%%%%%%%%%%%%%%%%%%%

% We will show that the class of domestic games and that of weakly miserable games in fact coincide.

\begin{theorem}\label{D=WM}
A game is weakly miserable if and only if it is domestic.
\end{theorem}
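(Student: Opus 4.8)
The statement is an "if and only if," so I would prove the two implications separately, and in each direction argue contrapositively by exhibiting a position where the relevant local condition fails. The key is to translate between the global classification condition (domestic = no $(0,k)$- or $(k,0)$-positions with $k\geq 2$) and the local disjunction of properties $(\mathfrak{a})$, $(\mathfrak{b})$, $(\mathfrak{c})$, $(\mathfrak{c_0})$, $(\mathfrak{c_1})$ that defines weak miserability. The main tool will be Lemma~\ref{SG} together with its misère analogue: if $\G(x)=i$ then $x$ has options realizing every SG value $0,1,\dots,i-1$ (and none equal to $i$), and symmetrically for $\G^-(x)=j$ with the misère SG values.

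\medskip

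First I would prove that \emph{domestic implies weakly miserable}. Fix a position $x$ and consider its type $(i,j)=(\G(x),\G^-(x))$. If $x\in V_{0,1}\cup V_{1,0}$ then $(\mathfrak{a})$ holds and we are done, so assume $x$ is not a swap position. The goal is to show one of $(\mathfrak{b})$, $(\mathfrak{c})$, $(\mathfrak{c_0})$, $(\mathfrak{c_1})$ must hold. The natural case split is on whether $x$ is movable to $V_{0,1}\cup V_{1,0}$ at all: if not, $(\mathfrak{b})$ holds directly. If $x$ \emph{is} movable to some swap position, I would use domesticity to pin down which swap positions are reachable and then force a \emph{second} reachable target of the required kind. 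Concretely, suppose $x$ has a move to a $(0,1)$-position $y$. Then $\G(y)=0$, so by Lemma~\ref{SG}(i) applied to $x$ we get $\G(x)\geq 1$; and since $\G^-(y)=1$, the misère recursion forces $x$ to have some option $z$ with $\G^-(z)=0$. Domesticity says $z\in V_{0,0}\cup V_{1,0}$, i.e. $z$ is a $(0,0)$- or $(1,0)$-position, which yields $(\mathfrak{c_0})$ or $(\mathfrak{c})$ respectively. The symmetric argument starting from a reachable $(1,0)$-position produces $(\mathfrak{c_1})$ or $(\mathfrak{c})$. Thus every non-swap movable-to-swap position satisfies $(\mathfrak{c})$, $(\mathfrak{c_0})$, or $(\mathfrak{c_1})$, completing this direction.

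\medskip

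For the converse, \emph{weakly miserable implies domestic}, I argue by contradiction: suppose the game has a $(0,k)$- or $(k,0)$-position with $k\geq 2$, and take such a bad position $x$ of \emph{minimal} depth $d(x)$. I want to show no clause of weak miserability can hold at the relevant witnessing position, contradicting the hypothesis. Consider, say, $x\in V_{0,k}$ with $k\geq 2$: then $\G(x)=0$ but $\G^-(x)=k\geq 2$, so by the misère version of Lemma~\ref{SG}(ii) position $x$ has options realizing misère values $0,1,\dots,k-1$; in particular $x$ is movable to positions of misère value $0$ and of misère value $1$. Because $\G(x)=0$, Lemma~\ref{SG}(i) forces every option of $x$ to have normal SG value $\geq 1$, so none of these options lies in $V_{0,0}$ or $V_{0,1}$. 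Tracking the resulting types of the options and feeding them into the five clauses of Definition~\ref{D.WTMS}(iv) should show that the only escape is for one of $x$'s options to itself be a bad position, contradicting minimality of $d(x)$. The symmetric case $x\in V_{k,0}$ is handled the same way with the roles of $\G$ and $\G^-$ swapped.

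\medskip

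The main obstacle I anticipate is the bookkeeping in the converse direction: the five alternative clauses $(\mathfrak{a})$, $(\mathfrak{b})$, $(\mathfrak{c})$, $(\mathfrak{c_0})$, $(\mathfrak{c_1})$ must \emph{all} be ruled out at the minimal bad position, and doing so cleanly requires carefully combining the normal constraint (all options have $\G\geq 1$, so $x$ reaches $V_{1,0}$ for the $(\mathfrak{c})$/$(\mathfrak{c_1})$ clauses only via misère value $0$) with the misère constraint that $x$ reaches misère values $0$ and $1$. The minimality of $d(x)$ is what closes the loop, since any option with a "forbidden" type would itself be a shallower bad position. I would organize this as a short finite case analysis on the type of the misère-$0$ and misère-$1$ options of $x$, using Lemma~\ref{V01V10} where convenient to locate swap options, and keep the two symmetric cases parallel to avoid repetition.
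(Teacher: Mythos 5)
Your plan follows the same route as the paper's proof in both directions. For ``weakly miserable $\Rightarrow$ domestic'' you take a minimal-depth bad position $x$ (say $x \in V_{0,k}$, $k \geq 2$), use the mis\`ere version of Lemma~\ref{SG} to produce an option of mis\`ere value $0$, observe that $\G(x)=0$ rules out options in $V_{0,0} \cup V_{0,1}$ (killing $(\mathfrak{c})$, $(\mathfrak{c_0})$, $(\mathfrak{c_1})$), and let minimality force the mis\`ere-$0$ option to be a $(1,0)$-position so that $(\mathfrak{b})$ also fails; this is exactly the paper's argument. Your converse direction is organized slightly differently (you split on ``movable to a swap position or not'' instead of on the type of $x$, which lets you avoid the paper's separate treatment of $(0,0)$- and $(1,1)$-positions), but it uses the same ingredients.

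However, one inference in that converse direction is invalid as stated: ``since $\G^-(y)=1$, the mis\`ere recursion forces $x$ to have some option $z$ with $\G^-(z)=0$.'' An option of mis\`ere value $1$ only gives $\G^-(x) \neq 1$; if no option had mis\`ere value $0$, the mex would simply make $\G^-(x)=0$, and nothing about $y$ contradicts that (for instance, a non-terminal position whose unique option is terminal has an option of mis\`ere value $1$ and none of mis\`ere value $0$). The conclusion is still true in your setting, but it needs the hypotheses you have in hand: since $y$ is an option with $\G(y)=0$, we get $\G(x)\geq 1$; if $\G^-(x)=0$, then $x$ would be a $(k,0)$-position with $k\geq 2$ (the case $k=1$ is excluded because $x$ is assumed not to be a swap position), contradicting domesticity; hence $\G^-(x)\geq 1$, and only now does the mex definition yield an option $z$ with $\G^-(z)=0$, after which your appeal to domesticity ($z \in V_{0,0}\cup V_{1,0}$, giving $(\mathfrak{c_0})$ or $(\mathfrak{c})$) is correct. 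The symmetric case of a move to a $(1,0)$-position needs the same one-line patch. With that repair the proposal is complete.
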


\begin{proof}
\item [$(\Rightarrow)$]
Assume that $G$ is weakly miserable but not domestic.
Let  $x$  be a  $(0,k)$-position with  $k \geq 2$  for which   $d(x)$  takes the smallest possible value.
% Without loss of generality, assume that  $x$  is a  $(0,k)$-position.
Then, there is a move from  $x$  to a  $(k',0)$-position  $x'$.
Since $d(x') < d(x)$, from our assumption we conclude that  $G_{[x']}$  is domestic and, hence, $k' \leq 1$.
Furthermore,  $k' = \G(x') \neq 0$ since  $\G(x) = 0$ and  $x$  is movable to  $x'$; hence,  $k' = 1$.
Thus, $x'$ is a $(1,0)$-position  and  $(\mathfrak{b})$  fails for  $x$.
Note that  $(\mathfrak{a})$ does not hold for $x$  either.

Similarly,  $x$  is movable to no position $y$  with $\G(y) = 0$, because  $\G(x) = 0$.
Therefore,  $\mathfrak(c)$, $\mathfrak(c_0)$, and $\mathfrak(c_1)$  fail for  $x$, resulting in a contradiction.
Thus, $G$ is domestic.

The case when  $x$ is a $(k,0)$-position, rather than $(0,k)$-position, is similar.

\item [$(\Leftarrow)$]
Assume that $G$ is domestic.
If  $x$ is a swap position, then $\mathfrak{(a)}$ holds for $x$.
If $x$ is a $(0,0)$-position or a $(1,1)$-position, then $\mathfrak{(b)}$  holds for $x$.
If $x$ is an $(a,b)$-position such that $\operatorname{max}(a,b) \geq 2$, then $\operatorname{min}(a,b) \geq 1$, because $G$ is domestic.

Without loss of generality, assume that $a \leq b$.
Since $a \geq 1$ and $b \geq 1$, there is a move from  $x$ %is movable
to a $(0,i)$-position $y$ and to a $(j,0)$-position $z$.
Then,  $i \leq 1$ and $j \leq 1$, because  $G$ is domestic.
If $i = 1$ and $j = 1$, then $(\mathfrak{c})$ holds for $x$.
Otherwise,  $x$  is  movable to a $(0,0)$-position$^{(\star)}$.

If $(\mathfrak{b})$  fails for $x$, then $x$ is movable to either a $(0,1)$-position or a $(1,0)$-position$^{(\star\star)}$.
By $^{(\star)}$ and $^{(\star\star)}$, either $(\mathfrak{c_0})$ or $(\mathfrak{c_1})$ holds for  $x$.
Hence, the game is weakly miserable.
\end{proof}

%%%%%%%%%%%%%%%%%%%%%%%%%%%%%%%%%%%%%%%%%%%%%%%%%%%%%%%%%%%%%%%%%%%%%%%%%%%%%%%%%%%%%%%%%
%%%%%%%%%%%%%%%%%%%%%%%%%%%%%%%%%%%%%%%%%%%%%%%%%%%%%%%%%%%%%%%%%%%%%%%%%%%%%%%%%%%%%%%%%

\subsection{Tame games and $t$-miserable games coincide}

% These two classes in fact coincide too.

\begin{theorem} \label{tame-iff}
A game $G$ is tame if and only if it is $t$-miserable.
\end{theorem}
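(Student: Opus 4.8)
The plan is to prove both implications by exploiting that every property in Definition~\ref{D.WTMS} refers only to the four sets $V_{0,1},V_{1,0},V_{0,0},V_{1,1}$, whereas a position is tame precisely when its type lies in $\{(0,1),(1,0)\}\cup\{(k,k):k\ge 0\}$. For a position $x$ all of whose options are tame, it is convenient to sort those options by type: write $A,B,C_0,C_1$ for the sets of options of type $(0,1),(1,0),(0,0),(1,1)$, and let $K=\{k\ge 2: x \text{ has an option of type } (k,k)\}$. The decisive bookkeeping fact is that the options realize, under $\G$, the value $0$ iff $A\cup C_0\neq\emptyset$, the value $1$ iff $B\cup C_1\neq\emptyset$, and a value $k\ge 2$ iff $k\in K$; dually, under $\G^-$ they realize $0$ iff $B\cup C_0\neq\emptyset$, $1$ iff $A\cup C_1\neq\emptyset$, and $k\ge 2$ iff $k\in K$. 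Throughout I use that $\G^-$ obeys the same mex-rule as $\G$ on non-terminal positions (the misère analogue of Lemma~\ref{SG}), the only difference being the value $1$ at terminals.

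For the forward implication I assume $G$ tame and verify $t$-miserability position by position. If $x\in V_{0,1}\cup V_{1,0}\cup V_{0,0}\cup V_{1,1}$ then $(\mathfrak{a_0})$ holds outright, so the only case to treat is a $(k,k)$-position with $k\ge 2$. There $\G(x)\ge 2$ and $\G^-(x)\ge 2$, so by Lemma~\ref{SG}(ii) and its misère analogue $x$ has options of $\G$-values $0$ and $1$ and options of $\G^-$-values $0$ and $1$; in the notation above this says $A\cup C_0$, $B\cup C_1$, $B\cup C_0$, $A\cup C_1$ are all nonempty. If $(\mathfrak{e})$ fails, then $C_0=\emptyset$ or $C_1=\emptyset$; in the former case $A\cup C_0\neq\emptyset$ and $B\cup C_0\neq\emptyset$ force $A\neq\emptyset$ and $B\neq\emptyset$, and in the latter case $B\cup C_1\neq\emptyset$ and $A\cup C_1\neq\emptyset$ again force $A\neq\emptyset$ and $B\neq\emptyset$. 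Either way $(\mathfrak{c})$ holds, so $G$ is $t$-miserable.

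For the converse I argue by induction on $d(x)$, exactly as in the proof of Theorem~\ref{D=WM}: assuming $G$ is $t$-miserable but not tame, I take a non-tame position $x$ minimizing $d(x)$, so that every option of $x$ is tame. Since the four sets named in $(\mathfrak{a_0})$ consist only of tame types, $x$ cannot satisfy $(\mathfrak{a_0})$; hence $x$ satisfies $(\mathfrak{c})$ or $(\mathfrak{e})$, and in either case $x$ is movable to positions of $\G$-values $0$ and $1$ and of $\G^-$-values $0$ and $1$, so $\G(x)\ge 2$ and $\G^-(x)\ge 2$. The crux is that, the options being tame, a value $\ge 2$ occurs among the options' $\G$-values exactly when it occurs among their $\G^-$-values, both happening precisely for $k\in K$, since only $(k,k)$-options contribute values above $1$. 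Thus the set of $\G$-values and the set of $\G^-$-values of the options both equal $\{0,1\}\cup K$, and taking mex yields $\G(x)=\operatorname{mex}(\{0,1\}\cup K)=\G^-(x)$. Hence $x$ is a $(k,k)$-position, contradicting non-tameness.

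The only genuinely delicate point is this last coincidence of the ``high'' value sets of $\G$ and $\G^-$ among the options; everything else is a finite case check. I expect the main obstacle to be organizing the induction so that ``all options are tame'' is guaranteed, which the minimal-$d(x)$ choice secures just as in Theorem~\ref{D=WM}, together with checking the base case $d(x)=0$: there $x$ is terminal of type $(0,1)$, hence tame, so it is never the chosen counterexample.
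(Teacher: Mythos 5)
Your proof is correct and takes essentially the same approach as the paper's: the forward direction applies Lemma~\ref{SG}(ii) and its mis\`ere analogue at a $(k,k)$-position with $k \ge 2$ and uses tameness to constrain the types of the resulting options, your ``if $(\mathfrak{e})$ fails then $(\mathfrak{c})$ holds'' being a compressed form of the paper's explicit four-case check; the converse is the same induction on $d(x)$ (phrased as a minimal counterexample), concluding that the sets $M$ and $M^-$ of normal and mis\`ere SG values of the options coincide and hence have equal mex. No gaps.
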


\begin{proof}
\item [$(\Rightarrow)$]
Let us assume that $G$ is tame and prove that
for every position $x$ at least one of three properties
$(\mathfrak{a_0}$), $(\mathfrak{c})$, $(\mathfrak{e})$ holds.

Furthermore,  $(\mathfrak{a_0})$ holds for $x$  if
$x$  is either a swap, or a $(0,0)$-positionor a $(1,1)$-position.
Assume that $x$ is a $(k,k)$-position for some $k \geq 2$.
By Lemma  \ref{SG} and its mis\`ere version, there are moves from $x$:

\smallskip
\noindent
to a $(0,i')$-position $x'$, to a $(1,i'')$-position $x''$, to a $(i''',0)$-position $x'''$, and to a $(i'''',1)$-position $x''''$.

\smallskip
\noindent
Furthermore, $\operatorname{max}(i',i'',i''',i'''') \leq 1$, since the game is tame.

\smallskip

If $i' = 1$ and $i'' = 0$, $(\mathfrak{c})$ holds.
If $i' = 0$ and $i'' = 1$, $(\mathfrak{e})$ holds.
If $i' = 1$ and $i'' = 1$, we consider $x'''$.
If $i''' = 1$, $(\mathfrak{c})$ holds; otherwise, $(\mathfrak{e})$ holds.
If $i' = 0$ and $i'' = 0$, consider $x''''$.
If $i'''' = 1$, $(\mathfrak{e})$ holds; otherwise, $(\mathfrak{c})$ holds.

\item [$(\Leftarrow)$]
Let us assume that ($\mathfrak{a_0}$), or $(\mathfrak{c})$, or $(\mathfrak{e})$ holds for every position
and prove by induction on $d(x)$ that each $x$ is either a $(k,k)$-position for some  $k \geq 0$  or a swap position. Note that the claim holds when  $d(x) \leq 1$.
Indeed, $d(x) = 0$  if and only if position  $x$ is terminal;
in this case  $x$  is a $(0,1)$-position.
Furthermore, $d(x) = 1$  if and only if every move from  $x$
results in a terminal position; in this case  $x$ is a $(1,0)$-position.

%for the terminal positions since they are $(0,1)$-positions, and also
%for every position $x$  with $d(x) = 1$ since those are $(1,0)$-positions.

Let us proceed by induction.
Assume that the claim holds for every position $x$  with $d(x) \leq n$, for some $n \geq 1$, and
prove it for  $x$  with  $d(x) = n+1$.

Assume that $(\mathfrak{a_0})$ fails for an $(a,b)$-position  $x$.
Then, obviously, $a \geq 2$  or  $b \geq 2$.
Without loss of generality, assume that $a \geq 2$ and consider two sets
\[M = \{\G(y) \mid \text{ $y$ is a option of $x$}\} \text{ and } M^- = \{\G^-(y) \mid \text{ $y$ is a option of $x$}\}.\]

If $(\mathfrak{c})$  or  $(\mathfrak{e})$ holds for $x$, both $M$ and $M^-$ contain both $0$ and $1$.
Furthermore, if $y$ is a option of $x$ and $y \not \in V_{0,1} \cup  V_{1,0} \cup V_{0,0} \cup V_{1,1}$, then
$y$  is a  $(k,k)$-position for some $k \geq 2$ by the inductive hypothesis.
Therefore, $M = M^-$, implying that
$\G(x) = \operatorname{mex}(M) = \operatorname{mex}(M^-) = \G^-(x)$ and, hence,  $x$ is a $(k,k)$-position for some  $k \geq 0$.
\end{proof}

%%%%%%%%%%%%%%%%%%%%%%%%%%%%%%%%%%%%%%%%%%%%%%%%%%%%%%%%%%%%%%%%%%%%%%%%%%%%%%%%%%%%%%%%%
%%%%%%%%%%%%%%%%%%%%%%%%%%%%%%%%%%%%%%%%%%%%%%%%%%%%%%%%%%%%%%%%%%%%%%%%%%%%%%%%%%%%%%%%%
%%%%%%%%%%%%%%%%%%%%%%%%%%%%%%%%%%%%%%%%%%%%%%%%%%%%%%%%%%%%%%%%%%%%%%%%%%%%%%%%%%%%%%%%%

\subsection{Miserable games are tame}

\begin{theorem} \label{MisT}
A miserable game is tame.
\end{theorem}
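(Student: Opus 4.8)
The plan is to prove directly, by induction on $d(x)$, that in a miserable game every position is either a swap position or a $(k,k)$-position; by Definition~\ref{D.DTP} this is exactly the assertion that the game is tame. Equivalently, one could show that miserability implies $t$-miserability and then invoke Theorem~\ref{tame-iff}, but the structural induction is self-contained and carries out the same work, so I would argue it directly. Throughout I would combine the inductive hypothesis with the $\operatorname{mex}$ characterization of $\G$ and $\G^-$ given by Lemma~\ref{SG} and its mis\`ere analogue.

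For the base case $d(x)=0$ the position $x$ is terminal, so $\G(x)=0$ and $\G^-(x)=1$ by the respective initializations; thus $x\in V_{0,1}$ is a swap position. It is essential to treat terminal positions separately here, precisely because $\G^-$ is \emph{not} computed by the $\operatorname{mex}$ recursion at terminals. For the inductive step I fix $x$ with $d(x)=n+1\geq 1$ and assume the statement for all positions of smaller $d$-value. Every option $y$ of $x$ satisfies $d(y)\leq n$, so by hypothesis each such $y$ is a swap position or a $(k,k)$-position. Since $G$ is miserable, one of $(\mathfrak{a})$, $(\mathfrak{b})$, $(\mathfrak{c})$ holds at $x$, and I would split into these three cases. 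If $(\mathfrak{a})$ holds then $x$ is itself a swap position and nothing remains to prove. In the other two cases I set $M=\{\G(y)\mid y\text{ an option of }x\}$ and $M^-=\{\G^-(y)\mid y\text{ an option of }x\}$ and aim to establish $M=M^-$, for then $\G(x)=\operatorname{mex}(M)=\operatorname{mex}(M^-)=\G^-(x)$, making $x$ a $(k,k)$-position.

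The decisive observation is that any option $y$ with $\G(y)\geq 2$ or $\G^-(y)\geq 2$ cannot be a swap position, hence is a $(k,k)$-position by the inductive hypothesis, so it contributes the same value $k\geq 2$ to both $M$ and $M^-$; thus $M$ and $M^-$ agree on all values exceeding $1$ automatically. It then remains only to match the values $0$ and $1$. If $(\mathfrak{c})$ holds, $x$ is movable both to a $(0,1)$-position and to a $(1,0)$-position, and these two options together force $0$ and $1$ into both $M$ and $M^-$, whence $M=M^-$. If only $(\mathfrak{b})$ holds, then no option of $x$ lies in $V_{0,1}\cup V_{1,0}$, so by the inductive hypothesis every option is a $(k,k)$-position, giving $\G(y)=\G^-(y)$ for all options and hence $M=M^-$ directly. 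In either case $x$ is a $(k,k)$-position, which completes the induction.

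I expect the only genuine subtlety to be the bookkeeping in case $(\mathfrak{c})$: checking that the presence of both swap options and $(k,k)$-options cannot introduce any asymmetry between $M$ and $M^-$ at the values $0$ and $1$. The case $(\mathfrak{b})$ is where the inductive hypothesis does the real lifting, since it upgrades ``no swap options'' to ``all options are $(k,k)$''. Beyond that, the main point to keep in mind is the exclusion of terminal positions from the recursive step, everything else being a routine application of the $\operatorname{mex}$ rules.
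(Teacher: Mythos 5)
Your proof is correct and takes essentially the same route as the paper's: induction on $d(x)$, a case split on which of $(\mathfrak{a})$, $(\mathfrak{b})$, $(\mathfrak{c})$ holds, and use of the inductive hypothesis to force every non-swap option to be a $(k,k)$-position so that the normal and mis\`ere $\operatorname{mex}$ computations coincide. The only cosmetic difference is that you phrase the conclusion as $M=M^-$ while the paper writes out the resulting $\operatorname{mex}$ values explicitly in cases $(\mathfrak{b})$ and $(\mathfrak{c})$.
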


This statement was announced in \cite{Gur07} and shown in \cite{Gur11, Gur120}.
Here we provide simpler arguments.

\begin{proof}
Assume that $G$ is miserable and prove by induction on $d(x)$ that
every position $x$ is either a swap position or a $(k,k)$-position for some $k \geq 0$.

%If $d(x) = 0$, $x$ is the terminal position which is a $(0,1)$-position.
%If $d(x) = 1$, each move from $x$ leads to the terminal position and so $x$ is a $(1,0)$-position.

The case  $d(x) \leq 1$ was already considered in the proof of Theorem \ref{tame-iff} above (if $d(x) = 0$  then $x$  is a $(0,1)$-position; if $d(x) = 1$ then $x$ is a $(1,0)$-position).

Let us assume that the claim holds for every position $x$  with  $d(x) \leq n$, for some $n \geq 1$, and
prove that it holds for every position $x$  with $d(x) = n+1$.

Since $G$ is miserable,  $(\mathfrak{a})$, or $(\mathfrak{b})$, or $(\mathfrak{c})$  holds for $x$.
\begin{enumerate} [(i)] \itemsep0em
\item If $(\mathfrak{a})$ holds, $x$ is a swap position and we are done.
\item If $(\mathfrak{b})$  holds, by the inductive hypothesis, each option  $y$  of  $x$  is a  $(k_y,k_y)$-position for some $k_y \geq 0$. Therefore, $x$ is a $(k,k)$-position in which
    $$k = \operatorname{mex}\{k_y \mid \text{ $y$ is a option of $x$}\}.$$
\item If $(\mathfrak{c})$ holds, by the inductive hypothesis, each option  $y$  of  $x$ is either a swap position or a  $(k_y,k_y)$-position for some $k_y \geq 0$. Therefore, $x$ is a $(k,k)$-position in which
    $$k = \operatorname{mex}\{0,1,k_y \mid \text{ $y$ is a option of $x$ and $y$ is a $(k_y,k_y)$-position}\}.$$
    Note that in this case, $k \geq 2$.
\end{enumerate}
\end{proof}

\smallskip

% \begin{example}
Figure \ref{TnM} provides a tame game that is not miserable
showing that the containment of Theorem \ref{MisT} is strict.
% \end{example}

\begin{figure}[ht]
\begin{center}
\begin{tikzpicture}[style={draw=blue,thick,circle,inner sep=0pt}] %[>=latex]
 \node at (0in,0in) [shape=circle,minimum size=.6cm,draw=black,thick] (A) {0,1};
 \node at (1in,0in) [shape=circle,minimum size=10pt,draw=black,thick] (B) {1,0};
 \node at (2in,0in) [shape=circle,minimum size=10pt,draw=black,thick] (C) {2,2};
 \node at (3in,0in) [shape=circle,minimum size=10pt,draw=black,thick] (D) {0,0};
 \node at (4in,0in) [shape=circle,minimum size=10pt,draw=black,thick] (E) {1,1};
 \node at (5in,0in) [shape=circle,minimum size=10pt,draw=black,thick] (F) {3,3};
\draw [->] (F) to (E);
\draw [->] (E) to (D);
\draw [->] (D) to (C);
\draw [->] (C) to (B);
\draw [->] (B) to (A);
\draw [->] (C) to [out=160,in=20] (A);
\draw [->] (F) to [out=160,in=20] (D);
\draw [->] (F) to [out=155,in=25] (C);
\draw [->] (F) to [out=150,in=30] (A);
\end{tikzpicture}
\end{center}
\caption{\label{TnM}
This game is tame but not miserable, since
$(\mathfrak{a})$, $(\mathfrak{b})$, and $(\mathfrak{c})$
fail for the initial position.}
%the initial position satisfies
%neither $(\mathfrak{a})$, nor $(\mathfrak{b})$, nor $(\mathfrak{c})$.}
\end{figure}
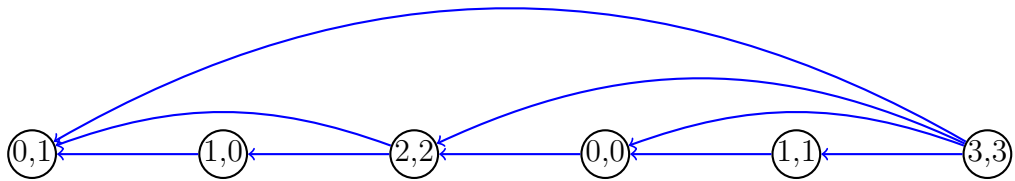

%%%%%%%%%%%%%%%%%%%%%%%%%%%%%%%%%%%%%%%%%%%%%%%%%%%%%%%%%%%%%%%%%%%%%%%%%%%%%%%%%%%%%%%%%
%%%%%%%%%%%%%%%%%%%%%%%%%%%%%%%%%%%%%%%%%%%%%%%%%%%%%%%%%%%%%%%%%%%%%%%%%%%%%%%%%%%%%%%%%
\subsection{Pet games and strongly miserable games coincide}
There pet games can be characterized in many equivalent ways;
the following list was suggested in \cite{Gur120}.

\begin{theorem} \label{SM=P}
The following properties of a game $G$ are equivalent.
\begin{enumerate} \itemsep0em
\item [\rm{(i)}]   $G$ is strongly miserable.
\item [\rm{(ii)}]  $G$ is pet.
\item [\rm{(iii)}] $G$ has no $(0,0)$-position.
\item [\rm{(iv)}]  $G$ has neither $(0,0)$-position nor  $(1,1)$-position.
\item [\rm{(v)}]   If $\G(x) = 0$ and $x$ is not terminal then $x$ is movable to some $x'$ with $\G(x') = 1$.
\item [\rm{(vi)}]  If $\G^-(x) = 0$ then  $x$ is movable to some $x'$ with $\G^-(x') = 1$.
\end{enumerate}
\end{theorem}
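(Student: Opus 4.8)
The plan is to prove the two ``local'' equivalences (i) $\Leftrightarrow$ (ii) together with the trivial chain (ii) $\Rightarrow$ (iv) $\Rightarrow$ (iii), and then to funnel the remaining items (iii), (v), (vi) into (ii) by a \emph{single} induction on $d(x)$. For (ii) $\Rightarrow$ (i), let $x$ be a position of a pet game. If $x$ is a swap position, then $(\mathfrak{a})$ holds. Otherwise $x$ is a $(k,k)$-position with $k \geq 2$, so by Lemma \ref{SG}(ii) it is movable to a position of normal value $0$ and to one of normal value $1$; since in a pet game the only type with $\G = 0$ is $(0,1)$ and the only type with $\G = 1$ is $(1,0)$, this gives moves into $V_{0,1}$ and into $V_{1,0}$, so $(\mathfrak{c})$ holds and $G$ is strongly miserable. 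Conversely, for (i) $\Rightarrow$ (iii), if a strongly miserable game had a $(0,0)$-position $x$, then $x$ is not a swap position, so $(\mathfrak{c})$ must hold and $x$ is movable into $V_{0,1}$, i.e.\ to a position of normal value $0$; this contradicts Lemma \ref{SG}(i) since $\G(x) = 0$. The implications (ii) $\Rightarrow$ (iv) $\Rightarrow$ (iii) are immediate from the definitions.

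Next I bring in (v) and (vi) by showing (ii) implies each of them. If $G$ is pet and $x$ is non-terminal with $\G(x) = 0$, then $x$ is a $(0,1)$-position, so $\G^-(x) = 1$; the misère analogue of Lemma \ref{SG}(ii) yields a move to a position of misère value $0$, which in a pet game must be a $(1,0)$-position and hence has normal value $1$. This is (v). Property (vi) follows symmetrically: if $\G^-(x) = 0$ then $x$ is a $(1,0)$-position, $\G(x) = 1$, and Lemma \ref{SG}(ii) yields a move to a $\G = 0$ position, necessarily a $(0,1)$-position, whose misère value is $1$.

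The heart of the argument is a single induction on $d(x)$ showing that each of (iii), (v), (vi) implies (ii). Call a position $x$ \emph{bad} if it is non-terminal and all its options are $(k,k)$-positions with $k \geq 2$; I claim that if no position is bad, then every position is of pet type. The base cases $d(x) \in \{0,1\}$ give a $(0,1)$- and a $(1,0)$-position. For $d(x) \geq 2$, the inductive hypothesis makes every option a swap or a $(k,k)$-position with $k \geq 2$, so, writing $p$ and $q$ for the existence of a $(0,1)$-option and of a $(1,0)$-option, the sets $M = \{\G(y)\}$ and $M^- = \{\G^-(y)\}$ over the options differ only in whether they contain $0$ and $1$. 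A short computation then gives: $x$ is a $(k,k)$-position with $k \geq 2$ when both $p$ and $q$ hold; a $(1,0)$-position when only $p$ holds (then $\operatorname{mex} M = 1$, $\operatorname{mex} M^- = 0$); a $(0,1)$-position when only $q$ holds; and — the sole obstruction — a $(0,0)$-position when neither holds, which is exactly the case that $x$ is bad. Finally, each hypothesis forbids a bad position: (iii) forbids the resulting $(0,0)$-position outright; under (v) a bad $x$ is a non-terminal $\G = 0$ position whose options all have normal value $\geq 2$, so $x$ has no $\G = 1$ option; and under (vi) a bad $x$ has $\G^-(x) = 0$ yet every option has misère value $\geq 2$, so $x$ has no option of misère value $1$. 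Hence under any of (iii), (v), (vi) no position is bad, the induction goes through, and $G$ is pet. Together with the implications above, this closes the cycle and yields the equivalence of (i)--(vi).

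The main obstacle I anticipate is the bookkeeping in the inductive step: one must check that substituting for each swap option the single value it contributes ($0$ versus $1$ in $M$ and in $M^-$), while leaving the diagonal values $\geq 2$ untouched, really does produce the four stated outcomes, and in particular that the ``neither'' case forces $\operatorname{mex} = 0$ under both conventions and thus a genuine $(0,0)$-position. Everything else is either definitional or a direct appeal to Lemma \ref{SG} and its misère version.
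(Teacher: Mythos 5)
Your proof is correct, and it takes a genuinely different route from the paper's. The paper proves (i) $\Rightarrow$ (ii) by invoking Theorem \ref{MisT} (miserable games are tame) and then ruling out $(0,0)$- and $(1,1)$-positions, and it establishes each converse implication (iii) $\Rightarrow$ (ii), (v) $\Rightarrow$ (ii), (vi) $\Rightarrow$ (ii) by a separate minimal-counterexample induction, each with its own case analysis on the type of the counterexample ($(1,1)$, $(0,a)$ with $a \geq 2$, $(b,c)$ with $1 \leq b < c$, and so on). You instead close (i), (ii), (iii) in a cycle whose step (i) $\Rightarrow$ (iii) is a one-line contradiction with Lemma \ref{SG}(i), which lets you bypass Theorem \ref{MisT} entirely, and you funnel all three of (iii), (v), (vi) into (ii) through a single induction organized around your notion of a \emph{bad} position (a non-terminal position all of whose options are $(k,k)$-positions with $k \geq 2$). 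The crux --- that when all options are of pet type, the pair $(\G(x), \G^-(x))$ is determined solely by which of the two swap types occur among the options, with the ``neither'' case yielding exactly a $(0,0)$-position, i.e.\ occurring exactly when $x$ is bad --- is computed correctly, and each of (iii), (v), (vi) directly forbids a bad position, since such a position would be a $(0,0)$-position having no option of normal value $1$ and no option of mis\`ere value $1$. What the paper's organization buys is reuse of machinery already established (Theorem \ref{MisT}) and short individual steps; what yours buys is self-containment and economy: one induction instead of three, and a clean isolation of the single local configuration that all three hypotheses must exclude.
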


%%%%%%%%%%%%%%%%%%%%%%%%%%%%%%%%%%%%%%%%%%%%%%%%%%%%%%%%%%%%%%%%%%%%%%%%%%%%%%%%%%%%%%%%%

Interestingly, property \rm{(v)}, claiming that
any non-terminal $0$-position is movable to a $1$-position, was
introduced (for some other purposes) already in 1974 by Ferguson \cite{Fer74}
who proved that it holds for all subtraction games; see Section~\ref{S.App}.

Some proofs were given in \cite{Gur120}. Here we give the complete analysis.

%%%%%%%%%%%%%%%%%%%%%%%%%%%%%%%%%%%%%%%%%%%%%%%%%%%%%%%%%%%%%%%%%%%%%%%%%%%%%%%%%%%%%%%%%
\begin{proof}[Proof of Theorem \ref{SM=P}.]

\item [$\rm{(i)} \Rightarrow \rm{(ii)}$.]
Every strongly miserable game is miserable and hence tame, by Theorem \ref{MisT}.
It remains to show that $G$  has neither $(0,0)$-position no $(1,1)$-position.
% Indeed, if  $x$  is either a  $(0,0)$-position or a $(1,1)$-position then
Indeed, assume that  $x$  is such a position.
Then, properties $(\mathfrak{a})$ and $(\mathfrak{c})$  of Definition \ref{D.WTMS} fail for $x$, which is contradiction.

\item [$\rm{(ii)} \Rightarrow \rm{(i)}$.]
Let  $x$  be a non-swap position of  $G$.
Since $G$ is pet,  $x$  is a  $(k,k)$-position for some $k \geq 2$.
By Lemma  \ref{SG} and its mis\`ere version, there are moves from  $x$ to a $(0,i)$-position and to a $(j,0)$-position.
Since  $G$  is pet, $i = j = 1$. Thus, $(\mathfrak{c})$ holds for $x$.

\item [$\rm{(ii)} \Rightarrow \rm{(iii)}$.]
This implication is straightforward.
%follows immediately from the definition of pet games.

\item [$\rm{(iii)} \Rightarrow \rm{(ii)}$.]
Assume that  $G$  has no $(0,0)$-position and prove by induction on  $d(x)$  that every position $x$
is either a swap position or a $(k, k)$-position for some  $k \geq 2$.
Standardly, the claim can be verified for the case  $d(x) \leq 1$.

Suppose that position  $x$  is a counterexample with the smallest value of  $d(x)$.
The following case analysis results in a contradiction:
\begin{enumerate} [(a)] \itemsep0em
\item Case 1: $x$ is a  $(1,1)$-position.
    Then $x$  is movable to a $(0,e)$-position $x_1$ with $e \neq 1$.
    Since $d(x_1) < d(x)$, our choice of $x$ implies $e = 1$, which is impossible.
\item Case 2: $x$ is a $(0, a)$-position (the case where $x$ is a $(a, 0)$-position is treated similarly) with $a \geq 2$.
    Then $x$ is movable to some $(e,1)$-position $x_2$  with $e \neq 0$.
    Since $d(x_2) < d(x)$, our choice of $x$ implies $e = 0$, which is impossible.
\item Case 3: $x$ is a $(b,c)$-position with $1 \leq b < c$.
    Then, there must be three options $x_3, x_4, x_5$  of  $x$  such that
    \begin{itemize} \itemsep0em
    \item $x_3$ is a $(0,i)$-position for some $i \geq 1$,
    \item $x_4$ is a $(j,0)$-position for some $j \geq 1$, and
    \item $x_5$ is a $(k,b)$-position for some $k \geq 0$
    \end{itemize}
    By the choice of $x$, we have $j=1$, and hence, $b \geq 2$.
    Furthermore, since  $b \geq 2$ and $d(x_5) < d(x)$, % by our choice of $x$,
    we have  $k = b$ or equivalently $\G(x_5) = \G(x)$, which is impossible.
\end{enumerate}

\item [$\rm{(iii)} \Leftrightarrow \rm{(iv)}$.] We already proved that $\rm{(iii)} \Rightarrow \rm{(ii)}$.
Furthermore, $\rm{(ii)} \Rightarrow \rm{(iv)}$ results immediately from the definition of pet games.
Thus, $\rm{(iii)} \Rightarrow \rm{(iv)}$  holds.

\item [$\rm{(ii)} \Rightarrow \rm{(v)} (\text{ resp.,} \rm{(vi)})$.]
Assume that  $G$ is pet. Let  $x$  be a position with  $\G(x) = 0$ (resp.,~$\G^-(x) = 0$).
Since $G$ is pet, $x$  must be a $(0,1)$-position (resp.,~$(1,0)$-position).
Since  $x$  is not a terminal position, it is movable to a $(l, 0)$-position
(resp.,~to a $(0,l)$-position) for some $l$. Since  $G$ is pet, we have $l = 1$, as required.

\item [$\rm{(v)} \Rightarrow \rm{(ii)}$.]
Assume that  \rm{(v)}  holds for a game  $G$  that is not pet.
Then, $G$  contains a position $x$  that is neither swap nor a $(k, k)$-position for any $k \geq 2$.
Due to symmetry, we can assume that $x$ is either
\begin{enumerate}\itemsep0em
\item a $(0, 0)$-position, or
\item a $(1, 1)$-position, or
\item an  $(m, n)$-position with $0 \leq m < n$ and $n \geq 2$.
\end{enumerate}

As  usual, let us choose such an $x$ with the smallest $d(x)$. Then,
\begin{enumerate}\itemsep0em
\item [$(\star)$]  every position $x'$ with $d(x') < d(x)$  is
a swap or a $(k,k)$-position for some $k \geq 2$
\end{enumerate}
In case (1) (resp.,~(2)), $x$  is movable to a position  $x'$  with  $\G(x') = 1$, by \rm{(v)}
(resp.,~$\G(x')=0$, by the SG Theorem).
Then, $x'$ is a $(1,0)$-position(resp.,~a $(0,1)$)-position, by  $(\star)$ and the assumption $d(x') < d(x)$.
Hence,  $\G^-(x') = 0 = \G^-(x)$ (resp.,~$\G^-(x') = 1 = \G^-(x)$), resulting in a contradiction.

Since $n \geq 2$, in case (3) there are moves  $x \rightarrow x'$ and $x \rightarrow x''$ such that
$\G^-(x') = 0$ and $\G^-(x'') = 1$, by Lemma  \ref{SG} and its mis\`ere version.
Since $d(x') < d(x)$ and $d(x'') < d(x)$, by $(\star)$ we conclude that
$x'$ and $x''$ are a $(1,0)$-positionand $(0,1)$-position, respectively.
Hence, $m \geq 2$. Since $\G^-(x) = n > m$, there exists a move $x \rightarrow x'''$ such that
$\G^-(x''') = m$, that is, $x'''$ is a $(r,m)$-position for some $r$.
Since $d(x''') < d(x)$ and $m \geq 2$, by  $(\star)$  we have  $r = m$.
Thus, that $\G(x) = m = \G^-(x''')$, resulting in a contradiction.

\item [$\rm{(vi)} \Rightarrow \rm{(ii)}$.] This case is similar to the case $(v) \Rightarrow (ii)$.
\end{proof}

%%%%%%%%%%%%%%%%%%%%%%%%%%%%%%%%%%%%%%%%%%%%%%%%%%%%%%%%%%%%%%%%%%%%%%%%%%%%%%%%%%%%%%%%%

\begin{proposition} \label{SM.R}
Strongly miserable games are returnable.
\end{proposition}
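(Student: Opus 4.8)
The plan is to exploit the equivalence established in Theorem \ref{SM=P}: since $G$ is strongly miserable, it is pet, so every position of $G$ is either a swap position (a $(0,1)$- or $(1,0)$-position) or a $(k,k)$-position with $k \geq 2$. I would verify the two symmetric implications of returnability directly, treating the $(0,1)$-case in full and obtaining the $(1,0)$-case by exchanging the roles of $\G$ and $\G^-$.

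So let $x$ be a $(0,1)$-position movable to a non-terminal position $y$. Since $\G(x) = 0$, Lemma \ref{SG}(i) forces $\G(y) \neq 0$, hence $y$ is not a $0$-position; being non-terminal and pet, $y$ is therefore either a $(1,0)$-position or a $(k,k)$-position with $k \geq 2$. The key step is a split on these two possibilities. If $y$ is a $(k,k)$-position with $k \geq 2$, then $y$ is not a swap position, so property $(\mathfrak{a})$ fails at $y$ and strong miserability forces $(\mathfrak{c})$ to hold there; in particular $y$ is movable to $V_{0,1}$, which is exactly what returnability demands. If instead $y$ is a $(1,0)$-position, then $\G(y) = 1$, and Lemma \ref{SG}(ii) guarantees a move from $y$ to some $z$ with $\G(z) = 0$; since $G$ is pet and thus has no $(0,0)$-position, $z$ is a $(0,1)$-position, again as required.

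The case where $x$ is a $(1,0)$-position is handled symmetrically, replacing $\G$ by $\G^-$ throughout: from $\G^-(x) = 0$ one obtains $\G^-(y) \neq 0$ via the mis\`ere analogue of Lemma \ref{SG}(i), so $y$ is either a $(0,1)$-position or a $(k,k)$-position with $k \geq 2$. In the former case, $\G^-(y) = 1$ yields a move to some $z$ with $\G^-(z) = 0$, and the absence of $(0,0)$-positions makes $z$ a $(1,0)$-position; in the latter case $(\mathfrak{c})$ again supplies a move to $V_{1,0}$.

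The step I expect to require the most care is the sub-case in which the option $y$ is itself a swap position, since there property $(\mathfrak{c})$ is unavailable and the return move must instead be extracted from the (mis\`ere) SG value of $y$ through the minimum-excludant characterization of Lemma \ref{SG}. This is precisely where the pet hypothesis is indispensable: it guarantees that the $0$-position (resp.,~mis\`ere $0$-position) thereby produced is a genuine swap position of the required type rather than a $(0,0)$-position. The remainder of the argument is a routine case check using only the pet classification of positions and the defining properties $(\mathfrak{a})$ and $(\mathfrak{c})$.
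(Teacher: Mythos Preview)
Your proof is correct and rests on the same core idea as the paper's: an option $y$ of a $(0,1)$-position has $\G(y)>0$, hence is movable to a $0$-position, which by petness must be a $(0,1)$-position (and symmetrically for the mis\`ere side). The paper's proof is slightly more streamlined in that it does \emph{not} split on whether $y$ is a $(1,0)$-position or a $(k,k)$-position: the SG argument you use in the $(1,0)$-subcase works verbatim for the $(k,k)$-subcase as well, since $\G(y)=k\geq 2>0$ already guarantees a move to a $0$-position. Your invocation of property $(\mathfrak{c})$ in the $(k,k)$-case is therefore correct but unnecessary, and the subcase you flagged as requiring ``the most care'' is in fact the whole argument.
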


\begin{proof}
Suppose that $G$ is a strongly miserable game and  $x$ is its  $(0,1)$-position
(resp.,~$(1,0)$-position).
If  $d(x) \leq 1$, we are done. Assume that $d(x) \geq 2$.
Then each option $x'$ of $x$ is an $(i,j)$-position with  $i > 0$
(resp.,~$j > 0$), by Lemma  \ref{SG} (resp.,~by its mis\`ere version).
Hence, $x'$  is movable to a $(0,k)$-position
(resp.,~$(k,0)$-position). % $x''$.
Then,  $k = 1$, since $G$ is strongly miserable (pet).
Thus,  $G$ is returnable.
\end{proof}

%%%%%%%%%%%%%%%%%%%%%%%%%%%%%%%%%%%%%%%%%%%%%%%%%%%%%%%%%%%%%%%%%%%%%%%%%%%%%%%%%%%%%%%%%
%%%%%%%%%%%%%%%%%%%%%%%%%%%%%%%%%%%%%%%%%%%%%%%%%%%%%%%%%%%%%%%%%%%%%%%%%%%%%%%%%%%%%%%%%
%%%%%%%%%%%%%%%%%%%%%%%%%%%%%%%%%%%%%%%%%%%%%%%%%%%%%%%%%%%%%%%%%%%%%%%%%%%%%%%%%%%%%%%%%

\section{Constructive characterizations of domestic, tame, miserable, and strongly miserable games}
\label{S.Equi}

%%%%%%%%%%%%%%%%%%%%%%%%%%%%%%%%%%%%%%%%%%%%%%%%%%%%%%%%%%%%%%%%%%%%%%%%%%%%%%%%%%%%%%%%%
%%%%%%%%%%%%%%%%%%%%%%%%%%%%%%%%%%%%%%%%%%%%%%%%%%%%%%%%%%%%%%%%%%%%%%%%%%%%%%%%%%%%%%%%%
\subsection{A general plan}

We could make use of Definitions \ref{D.DTP}  and \ref{D.WTMS} to verify whether
a game is miserable or strongly miserable, but to do so we have to know its swap positions.
It may be even more difficult to verify membership in the other considered classes, because
the sets  $V_{0,0}$ and/or  $V_{1,1}$  become also involved.
Since the SG values are defined recursively, it looks difficult
to guarantee in advance that a given subset
contains all, for example, $(0, 1)$-positions; see Definition \ref{D.WTMS}.

To avoid this problem and obtain constructive characterizations,
we will modify Definitions \ref{D.DTP}, \ref{D.WTMS}  and obtain
Theorems \ref{SMis.ss4}, \ref{Mis.ss4}, \ref{tame.ss4}, \ref{Dos.ss4}
characterizing strongly miserable (pet), miserable, $t$-miserable
(tame), weakly miserable (domestic) games, respectively.
In these theorems,  sets  $V_{0,1}$, $V_{1,0}$, $V_{0,0}$, $V_{1,1}$
of Definition \ref{D.WTMS} are replaced by some ``abstract" sets
$V'_{0,1}$, $V'_{1,0}$, $V'_{0,0}$, $V'_{1,1}$.
% Every modified theorem claims that conditions of the above definitions hold
% whenever the corresponding new sets satisfy the old
% (and some very simple new) conditions.
Requiring (almost) the same properties from these sets, we
characterize all above classes and  show that
the old and new sets are equal, that is,
$V'_{i,j} = V_{i,j}$  for all  $i,j \in \{0,1\}$.

We will prove only Theorem \ref{SMis.ss4};
the remaining three theorems can be proven in a similar way
and we leave them to the reader.

%%%%%%%%%%%%%%%%%%%%%%%%%%%%%%%%%%%%%%%%%%%%%%%%%%%%%%%%%%%%%%%%%%%%%%%%%%%%%%%%%%%%%%%%%
%%%%%%%%%%%%%%%%%%%%%%%%%%%%%%%%%%%%%%%%%%%%%%%%%%%%%%%%%%%%%%%%%%%%%%%%%%%%%%%%%%%%%%%%%
\subsection{Strongly miserable games}
Let us begin with the strongly miserable (pet) games.

%%%%%%%%%%%%%%%%%%%%%%%%%%%%%%%%%%%%%%%%%%%%%%%%%%%%%%%%%%%%%%%%%%%%%%%%%%%%
\begin{theorem} \label{SMis.ss4}
A game $G$ is strongly miserable if and only if there exist
two disjoint sets  $V'_{0,1}$
% containing $V_T$  XXX  see (ii)
and $V'_{1,0}$  satisfying the following conditions:
\begin{itemize}\itemsep0em
\item [\rm{(i)}]   both sets are independent, that is, there is no move between two positions of one set;
\item [\rm{(ii)}]  $V'_{0,1}$ contains all terminal positions, $V_T \subseteq V'_{0,1}$;
\item [\rm{(iii)}] $V'_{0,1} \setminus V_T$  is movable to  $V'_{1,0}$;
\item [\rm{(iv)}]  $V'_{1,0}$  is movable to  $V'_{0,1}$;
\item [\rm{SM(v)}] exactly one of the next two conditions holds for each position $x$:
    \begin{itemize} \itemsep0em
    \item [$(\mathfrak{a}')$] $x \in V'_{0,1} \cup V'_{1,0}$;
    \item [$(\mathfrak{c}')$] $x$ is movable to $V'_{0,1}$ and to $V'_{1,0}$.
    \end{itemize}
\end{itemize}
Moreover, if all above conditions hold then  $V'_{0,1} = V_{0,1}$  and  $V'_{1,0} = V_{1,0}$.
\end{theorem}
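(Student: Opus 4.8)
The plan is to prove the two directions separately, and to obtain the identification $V'_{i,j}=V_{i,j}$ as a byproduct of the ``if'' direction rather than as a separate argument. For the forward direction ($\Rightarrow$), suppose $G$ is strongly miserable. By Theorem \ref{SM=P}, $G$ is pet, so every position is either a swap position or a $(k,k)$-position with $k\geq 2$. I would simply set $V'_{0,1}=V_{0,1}$ and $V'_{1,0}=V_{1,0}$, the genuine $(0,1)$- and $(1,0)$-positions, and verify the five conditions. Disjointness is clear; independence (i) follows from Lemma \ref{SG}(i) applied to both $\G$ and $\G^-$ (a move cannot preserve a nim-value, so no $(0,1)$ moves to a $(0,1)$, etc.); (ii) holds because terminal positions are exactly the $(0,1)$-positions; (iii) and (iv) are precisely the content of Lemma \ref{V01V10} (or Proposition \ref{SM.R}), noting a pet game is domestic. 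For SM(v), the strong miserability of $G$ says every $x$ satisfies $(\mathfrak a)$ or $(\mathfrak c)$; I must upgrade this to \emph{exactly one}, which means checking that a swap position is never movable to both $V_{0,1}$ and $V_{1,0}$ — but a $(0,1)$-position has all options with $\G>0$ by Lemma \ref{SG}, so it is not movable to $V_{0,1}$, and dually, giving mutual exclusivity.

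The harder and more interesting direction is the converse ($\Leftarrow$). Here I am handed abstract sets $V'_{0,1},V'_{1,0}$ satisfying (i)--(iv) and SM(v), and I must show $G$ is strongly miserable \emph{and} that these sets coincide with the true ones. The natural tool is induction on $d(x)$, the length of the longest play to a terminal position. The key claim to prove by induction is the conjunction: every $x\in V'_{0,1}$ is a genuine $(0,1)$-position, every $x\in V'_{1,0}$ is a genuine $(1,0)$-position, and every $x$ with $(\mathfrak c')$ is a genuine $(k,k)$-position with $k\geq 2$. The base case $d(x)\leq 1$ is handled exactly as in the proofs of Theorems \ref{tame-iff} and \ref{MisT}: terminal positions are $(0,1)$ and lie in $V'_{0,1}$ by (ii), and positions with $d(x)=1$ move only to terminals. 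For the inductive step, I would fix $x$ with $d(x)=n+1$ and split on which of $(\mathfrak a')$, $(\mathfrak c')$ holds (exactly one does, by SM(v)).

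If $x\in V'_{0,1}$: by (i) no option lies in $V'_{0,1}$, and by SM(v) every option $y$ satisfies $(\mathfrak a')$ or $(\mathfrak c')$; using the inductive hypothesis, the options are either $(1,0)$-positions (those in $V'_{1,0}$) or $(k,k)$-positions with $k\geq 2$ (those satisfying $(\mathfrak c')$). Thus $0\notin\{\G(y)\}$ and $0\notin\{\G^-(y)\}$ except that condition (iii) forces a move to $V'_{1,0}$, supplying an option of nim-value $1$; computing the two mex's then gives $\G(x)=0$ and $\G^-(x)=1$, so $x$ is truly a $(0,1)$-position. The case $x\in V'_{1,0}$ is symmetric, using (iv). If instead $(\mathfrak c')$ holds, then $x\notin V'_{0,1}\cup V'_{1,0}$, and $x$ has options in both $V'_{0,1}$ and $V'_{1,0}$, i.e.\ a genuine $(0,1)$-option and a genuine $(1,0)$-option by the inductive hypothesis; every other option is a $(k,k)$-position with $k\geq 2$. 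Hence both $M=\{\G(y)\}$ and $M^-=\{\G^-(y)\}$ contain $0$ and $1$ and otherwise agree, so $\G(x)=\G^-(x)=\operatorname{mex}(M)\geq 2$, giving a $(k,k)$-position with $k\geq 2$.

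Once the induction is complete, $G$ has no $(0,0)$-position (every position is a swap or a $(k,k)$ with $k\geq 2$), so by Theorem \ref{SM=P} $(\mathrm{iii})\Rightarrow(\mathrm{i})$ the game $G$ is strongly miserable; simultaneously the induction shows $V'_{0,1}\subseteq V_{0,1}$ and $V'_{1,0}\subseteq V_{1,0}$, and the reverse inclusions follow since by SM(v) every genuine swap position must fall under $(\mathfrak a')$ (it cannot satisfy $(\mathfrak c')$, as a $(0,1)$- or $(1,0)$-position is not movable to both $V'_{0,1}$ and $V'_{1,0}$ given the already-established coincidence on smaller $d$). I expect the main obstacle to be the bookkeeping in the inductive step that guarantees $M$ and $M^-$ agree away from $\{0,1\}$ — one must invoke the inductive hypothesis to classify \emph{every} option, and carefully use the independence condition (i) to exclude spurious $0$'s from $M$ (respectively $M^-$) in the swap cases, since a single misclassified option would corrupt the mex computation.
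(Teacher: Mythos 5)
Your proposal is correct and follows essentially the same route as the paper's proof: the forward direction by taking $V'_{0,1}=V_{0,1}$ and $V'_{1,0}=V_{1,0}$, and the converse by induction on $d(x)$, classifying all options of a position so that the normal and mis\`ere mex computations can be compared, and finally invoking Theorem \ref{SM=P} to pass from ``no $(0,0)$-position'' to strong miserability. The only cosmetic difference is that your induction carries the inclusions $V'_{0,1}\subseteq V_{0,1}$, $V'_{1,0}\subseteq V_{1,0}$ (classifying members of the primed sets and the positions satisfying $(\mathfrak{c}')$), whereas the paper's induction carries the reverse inclusions together with the claim that every non-swap position is a $(k,k)$-position satisfying $(\mathfrak{c}')$; under SM(v) and the disjointness of the two sets these formulations are interchangeable.
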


\begin{proof}
The ``only if" part is straightforward, by setting $V'_{0,1} = V_{0,1}$  and  $V'_{1,0} = V_{1,0}$. Let us prove the ``if" part.
% We only need to prove the sufficient conditions.
Actually, it is enough to prove that  $V'_{0,1} = V_{0,1}$ and $V'_{1,0}  = V_{1,0}$. It then follows from condition \rm{SM(v)} that the game does not have $(0,0)$-position, and so it is strongly miserable by Theorem  \ref{SM=P}.

As  usual, we proceed by induction on  $d(x)$  to show the following claims:
\begin{itemize}\itemsep0em
\item [$(1)$] If $x$ is a $(0,1)$-position, then $x \in V'_{0,1}$;
\item [$(2)$] If $x$ is a $(1,0)$-position, then $x \in V'_{1,0}$;
\item [$(3)$] If $x$ is not a swap position then $x$ is a $(k,k)$-position for some $k \geq 2$ and, moreover, $(\mathfrak{c'})$ holds for $x$.
\end{itemize}

If $d(x) = 0$ then  $x$  is a terminal position and  $(1)$ holds, since $V'_{0,1}$ contains $V_T$.
If $d(x) = 1$ then $x$ is a $(1,0)$-position that is movable to terminal position.
Moreover, there are no other moves from $x$.
In particular, it means that there is no move from $x$ that terminates in $V'_{1,0}$.
The condition SM\rm{(v)} implies that $x \in  V'_{0,1} \cup V'_{1,0}$.
By % the condition
\rm{(i)}, $x \notin  V'_{0,1}$ and so $x \in V'_{1,0}$. Thus $(2)$ holds for $x$.

The claims $(1) - (3)$ are standardly verified for $d(x) =0 $  and $d(x) = 1$.
Let us assume that it holds for every position $x$ with $d(x) \leq n$ for some $n \geq 1$ and prove it for $x$ such that $d(x) = n+1$.

\begin{itemize}\itemsep0em
\item [$(1)$]
 Let $x$  be a  $(0,1)$-position.
 Then, $x$ is not movable to  $V'_{0,1} \cap G_{[x]}$, because each position of this set is
 a $(0,1)$-position, by the inductive hypothesis on $(1)$, meaning $x$ is not movable to $V'_{0,1}$.
 From this fact and SM\rm{(v)} it follows that $x \in V'_{0,1} \cup V'_{1,0}$. We show that $x \notin V'_{1,0}$.

Assume for contradiction that $x \in V'_{1,0}$. It follows from \rm{(iv)} that  $x$ is movable to a position  $y \in V'_{0,1}$.
By induction, if $y \in (G_{[x]} \cap V'_{0,1}) \setminus \{x\}$ then $y$ is a $(0,1)$-position.
But $x$ is a $(0,1)$-position too and, hence, it cannot be movable to such $y$. This give a contradiction.

Thus, $x \notin V'_{1,0}$, implying that $x \in V'_{0,1}$ or, equivalently, that $(1)$ holds.

\item [$(2)$] Similarly, assuming that  $x$ is a $(1,0)$-position.  We can show that  $x \in V'_{1,0}$.

\item [$(3)$] Assume that  $x$  is not a swap position. We show that  $(3)$ holds.
First, note that $x$ is neither a $(0,0)$-position nor a $(1,1)$-position as well,
because  $(\mathfrak{a}')$ or $(\mathfrak{c}')$ holds for  $x$.

Let $x$ be a $(k,l)$-position such that either $k \geq 2$ or $l \geq 2$.
Without loss of generality, assume that $k \geq 2$.
Then, $(\mathfrak{a}')$ fails for $x$ and, hence, $(\mathfrak{c}')$ holds.
It follows that $x$ is movable to a position $x'$ in $V'_{0,1}$ and to a position $x''$ in $V'_{1,0}$.
It remains to show that $l = k$.

Let us consider two sets
\[M = \{\G(y)\mid \text{ $y$ is a option of $x$} \} \text{ and }  M^- = \{\G^-(y)\mid \text{ $y$ is a option of $x$} \}.\]

We have $\{0,1\} \subseteq M$ and $\{0,1\} \subseteq M^-$, since both $x'$ and $x''$ are options of $x$.
Moreover, by the inductive hypothesis, if an option $y$ of $x$ is not a swap position then  $y$ is a $(m,m)$-position. Therefore, $M = M^-$ and, hence,
\[k = \G(x) = \operatorname{mex}(M) = \operatorname{mex}(M^-) =\G^-(x) = l.\]
\end{itemize}
\end{proof}

%%%%%%%%%%%%%%%%%%%%%%%%%%%%%%%%%%%%%%%%%%%%%%%%%%%%%%%%%%%%%%%%%%%%%%%%%%%%%%%%%%%%%%%%%
%%%% MISERABLE GAME: remark on proofs
%%%%%%%%%%%%%%%%%%%%%%%%%%%%%%%%%%%%%%%%%%%%%%%%%%%%%%%%%%%%%%%%%%%%%%%%%%%%%%%%%%%%%%%%%
%%%%%%%%%%%%%%%%%%%%%%%%%%%%%%%%%%%%%%%%%%%%%%%%%%%%%%%%%%%%%%%%%%%%%%%%%%%%%%%%%%%%%%%%%

\subsection{Miserable games}

%%%%%%%%%%%%%%%%%%%%%%%%%%%%%%%%%%%%%%%%%%%%%%%%%%%%%%%%%%%%%%%%%%%%%%%%%%%%%%%%%%%%%%%%%
%%%%%%%%%%%%%%%%%%%%%%%%%%%%%%%%%%%%%%%%%%%%%%%%%%%%%%%%%%%%%%%%%%%%%%%%%%%%%%%%%%%%%%%%%
%%%% MISERABLE GAME: COROLLARY

Miserable games can be characterized in a similar  way;
only property \rm{SM(v)} of Theorem \ref{SMis.ss4} is slightly changed.

\begin{theorem} \label{Mis.ss4}
A game  $G$ is miserable if and only if
there exist two disjoint sets  $V'_{0,1}$ and $V'_{1,0}$ satisfying % conditions
$(\rm{i}) - (\rm{iv})$ of Theorem \ref{SMis.ss4}  and
every position $x$ satisfies at least one of the following three conditions:
%\begin{itemize} \itemsep0em
%\item [\rm{(i)}]   % $V'_{0,1}$ and $V'_{1,0}$
%both sets are independent, that is, there is no move between two positions of one set;
%\item [\rm{(ii)}]  $V'_{0,1}$ contains all terminal positions, $V_T \subseteq V'_{0,1}$;
%\item [\rm{(iii)}] $V'_{0,1} \setminus V_T$  is movable to  $V'_{1,0}$;
%\item [\rm{(iv)}]  $V'_{1,0}$  is movable to  $V'_{0,1}$;
%\item [\rm{M(v)}] at least one of the following three conditions holds for each position $x$
%\end{itemize}

    \begin{itemize} \itemsep0em
    \item [$(\mathfrak{a}')$] $x \in V'_{0,1} \cup V'_{1,0}$;
    \item [$(\mathfrak{b}')$] $x$ is not movable to $V'_{0,1} \cup V'_{1,0}$;
    \item [$(\mathfrak{c}')$] $x$ is movable to $V'_{0,1}$ and to $V'_{1,0}$.
    \end{itemize}
Moreover, if all above conditions hold then  $V'_{0,1} = V_{0,1}$ and $V'_{1,0} = V_{1,0}$.
\end{theorem}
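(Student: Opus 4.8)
The plan is to mirror the proof of Theorem~\ref{SMis.ss4}, adjusting only for the fact that miserable games, unlike pet games, may contain $(0,0)$- and $(1,1)$-positions. For the ``only if'' direction I would set $V'_{0,1}=V_{0,1}$ and $V'_{1,0}=V_{1,0}$ and verify (i)--(iv) together with the three-way condition. Condition (ii) is immediate, and (i) follows from Lemma~\ref{SG} and its mis\`ere version: a move cannot preserve the SG value, so no move joins two $(0,1)$-positions, and dually (using $\G^-$) no move joins two $(1,0)$-positions. For (iii) and (iv) I would invoke Theorem~\ref{MisT}, so that $G$ is tame, and then play off the two recursions: if $x\in V_{1,0}$ then $\G(x)=1$ forces a move to a $0$-position, which by tameness is $(0,0)$ or $(0,1)$, while $\G^-(x)=0$ forces every option to have $\G^-\ge 1$, ruling out $(0,0)$; hence $x$ is movable to $V_{0,1}$, and symmetrically for (iii). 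Finally, with $V'=V$ the three listed conditions are literally $(\mathfrak{a})$, $(\mathfrak{b})$, $(\mathfrak{c})$ of Definition~\ref{D.WTMS}, so they hold because $G$ is miserable.

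For the ``if'' direction the key observation, exactly as in Theorem~\ref{SMis.ss4}, is that it suffices to prove $V'_{0,1}=V_{0,1}$ and $V'_{1,0}=V_{1,0}$: once this is known the three conditions become precisely $(\mathfrak{a})$, $(\mathfrak{b})$, $(\mathfrak{c})$, and $G$ is miserable by definition. I would prove the set equalities by induction on $d(x)$, establishing the trichotomy that each $x$ is a $(0,1)$-position lying in $V'_{0,1}$, a $(1,0)$-position lying in $V'_{1,0}$, or a $(k,k)$-position lying in neither; the essential relaxation relative to the pet case is that here $k\ge 0$ is allowed, so $(0,0)$- and $(1,1)$-positions are admitted among the non-swap vertices. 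The base cases $d(x)\le 1$ go through unchanged: a terminal lies in $V'_{0,1}$ by (ii), while a position all of whose options are terminal cannot lie in $V'_{0,1}$ (this would violate (iii), since its only options lie in $V_T\subseteq V'_{0,1}$, disjoint from $V'_{1,0}$), so by the three-way condition it lies in $V'_{1,0}$. The forward inclusions $V_{0,1}\subseteq V'_{0,1}$ and $V_{1,0}\subseteq V'_{1,0}$ then follow as in Theorem~\ref{SMis.ss4}, using independence (i) together with (iii), (iv) and the inductive hypothesis.

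The step I expect to be the main obstacle is the reverse inclusion, namely that a vertex of $V'_{0,1}$ is genuinely a $(0,1)$-position (and dually for $V'_{1,0}$). In the strongly miserable case this is automatic: a non-swap vertex is a $(k,k)$-position with $k\ge 2$, so $\G=k\ge 2$ forces a move to a $0$-position, which in a pet game must be a genuine $(0,1)$-position lying in $V'_{0,1}$ by induction, whereupon independence (i) expels the vertex from $V'_{0,1}$. For miserable games this mechanism can break down, because a $(k,k)$-position with $k\ge 2$ may realize its $\G=0$ option as a $(0,0)$-position rather than a $(0,1)$-position, so (i) no longer directly forbids it from sitting in $V'_{0,1}$. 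The crux is therefore to show, for $x\in V'_{0,1}$, that the options realizing $\G=0$ (respectively $\G^-=1$) are genuine $(0,1)$-positions, excluding $(0,0)$- and $(1,1)$-options; I would attack this by using (iii) to produce an option in $V'_{1,0}$ of mis\`ere value $0$ and then playing the normal and mis\`ere recursions against each other, but I expect this to be the delicate part of the argument, and the place where the hypotheses must be exploited considerably more carefully than in the pet case.
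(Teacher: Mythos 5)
You have correctly isolated the decisive step, and your suspicion about it is justified in the strongest possible sense: the step cannot be carried out, because the ``if'' direction of Theorem~\ref{Mis.ss4} as stated is false. (Note that the paper gives no proof of this theorem; it only asserts that the proof of Theorem~\ref{SMis.ss4} adapts. Your analysis shows exactly why it does not: in the pet case a non-swap position must satisfy $(\mathfrak{c}')$, which together with independence \rm{(i)} expels it from $V'_{0,1}\cup V'_{1,0}$; in the miserable case a non-swap position may instead sit inside $V'_{0,1}$, where it satisfies $(\mathfrak{a}')$ automatically, and conditions \rm{(i)}, \rm{(iii)} impose nothing that forces it to be a genuine $(0,1)$-position.) Your proposed repair --- use \rm{(iii)} to get an option in $V'_{1,0}$ of mis\`ere value $0$ and play the two recursions against each other --- also cannot succeed: even when that option is a genuine $(1,0)$-position, the position may have a further $(1,1)$-option, invisible to the primed sets, which pushes $\G^-$ up to $2$; nothing in \rm{(i)}--\rm{(iv)}, M\rm{(v)} excludes such options, precisely because the miserable framework, unlike Theorem~\ref{tame.ss4}, has no sets $V'_{0,0}$, $V'_{1,1}$ with which to forbid them.

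Concretely, take the game of Figure~\ref{TnM} with $F$ deleted and a new position $x$ added, i.e., positions $A,B,C,D,E,x$ and moves $B\to A$, $C\to A$, $C\to B$, $D\to C$, $E\to D$, $x\to E$, $x\to B$. Then $A,B,C,D,E$ are $(0,1)$-, $(1,0)$-, $(2,2)$-, $(0,0)$-, $(1,1)$-positions, while $x$ is a $(0,2)$-position, since its options $E,B$ carry normal values $\{1,1\}$ and mis\`ere values $\{1,0\}$; so the game is not even domestic, hence not miserable, and $V_{0,1}=\{A\}$, $V_{1,0}=\{B\}$. Yet $V'_{0,1}=\{A,x\}$ and $V'_{1,0}=\{B\}$ satisfy every hypothesis of Theorem~\ref{Mis.ss4}: they are disjoint and independent; $V_T=\{A\}\subseteq V'_{0,1}$; $x\to B$ gives \rm{(iii)} and $B\to A$ gives \rm{(iv)}; and M\rm{(v)} holds, since $A,B,x$ satisfy $(\mathfrak{a}')$, $C$ satisfies $(\mathfrak{c}')$, and $D,E$ satisfy $(\mathfrak{b}')$. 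Thus both the ``if'' direction and the ``moreover'' clause fail ($x\in V'_{0,1}\setminus V_{0,1}$), while Theorem~\ref{SMis.ss4} itself is untouched, because $D$ violates SM\rm{(v)}. The statement becomes true --- and your induction then closes without difficulty --- if one adds the hypothesis $V'_{0,1}\subseteq V_{0,1}$ and $V'_{1,0}\subseteq V_{1,0}$, i.e., that the candidate sets consist of genuine swap positions; this is in fact what the paper verifies in its applications before invoking the theorem (see Propositions~\ref{Euclid-M} and~\ref{Wyt-M}), so the applications survive, but the theorem as stated does not.
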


%\begin{remark} \label{Mis.ss4.rem}
%Theorem \ref{Mis.ss4} holds if conditions \rm{(i)}-\rm{(iv)} are replaced by $V_T \subseteq V'_{0,1} \subseteq V_{0,1}$ and $V'_{1,0} \subseteq V_{1,0}$
%\end{remark}

%%%%%%%%%%%%%%%%%%%%%%%%%%%%%%%%%%%%%%%%%%%%%%%%%%%%%%%%%%%%%%%%%%%%%%%%%%%%%%%%%%%%%%%%%
%%%%%%%%%%%%%%%%%%%%%%%%%%%%%%%%%%%%%%%%%%%%%%%%%%%%%%%%%%%%%%%%%%%%%%%%%%%%%%%%%%%%%%%%%
%%%%%%%%%%%%%%%%%%%%%%%%%%%%%%%%%%%%%%%%%%%%%%%%%%%%%%%%%%%%%%%%%%%%%%%%%%%%%%%%%%%%%%%%%
%%% TAME GAME

\subsection{Tame games}

%%%%%%%%%%%%%%%%%%%%%%%%%%%%%%%%%%%%%%%%%%%%%%%%%%%%%%%%%%%%%%%%%%%%%%%%%%%%%%%%%%%%%%%%
%%% TAME GAME: corollary

Similarly, we characterize tame games as follows.

\begin{theorem} \label{tame.ss4}
A game is tame if and only if there exist four disjoint sets
$V'_{0,1}$, $V'_{1,0}$, $V'_{0,0}$, $V'_{1,1}$ satisfying the following conditions:
\begin{itemize} \itemsep0em
\item [\rm{(i)}]   all four sets are independent;
\item [\rm{(ii)}]  $V'_{0,1}$  contains the terminal position, , $V_T \subseteq V'_{0,1}$;
\item [\rm{(iii)}] if $x \in V'_{0,1}$ then $x$ is movable to $V'_{1,0}$ but not to $V'_{0,0} \cup V'_{1,1}$;
\item [\rm{(iv)}]  if $x \in V'_{1,0}$ then $x$ is movable to $V'_{0,1}$ but not to $V'_{0,0} \cup V'_{1,1}$;
\item [\rm{(v)}]   if $x \in V'_{0,0}$ then  $x$ is not movable to $V'_{0,1} \cup V'_{1,0}$;
\item [\rm{(vi)}]  if $x \in V'_{1,1}$ then $x$ is movable to $V'_{0,0}$ but not to $V'_{0,1} \cup V'_{1,0}$;
\item [\rm{(vii)}] if $x \not\in V'_{0,1} \cup V'_{1,0} \cup V'_{0,0}$ then
$x$ is movable to $V'_{0,1} \cup V'_{1,0} \cup V'_{0,0}$.
\item [\rm{T(viii)}] Every position $x$ satisfies at least one of the following three conditions:
    \begin{itemize} \itemsep0em
    \item [$(\mathfrak{a_0}')$] $x \in V'_{0,1} \cup V'_{1,0} \cup V'_{0,0} \cup V'_{1,1}$;
    \item [$(\mathfrak{c}')$]   $x$ is movable to $V'_{0,1}$ and to $V'_{1,0}$;
    \item [$(\mathfrak{e}')$]   $x$ is movable to $V'_{0,0}$ and to $V'_{1,1}$.
    \end{itemize}
\end{itemize}
Moreover, $V'_{0,1} = V_{0,1}$, $V'_{1,0} = V_{0,1}$, $V'_{0,0} = V_{0,0}$, and $V'_{1,1} = V_{1,1}$
whenever all above conditions hold.
\qed
\end{theorem}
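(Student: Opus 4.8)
The proof follows the template of Theorem \ref{SMis.ss4}. The ``only if'' direction is routine: taking $V'_{i,j}=V_{i,j}$ for each of the four types, conditions (i)--(vii) and T(viii) can be read off from Lemma \ref{SG} and its mis\`ere analogue together with the fact that a tame game has only $(0,1)$-, $(1,0)$-, $(0,0)$-, $(1,1)$-, and $(k,k)$-positions with $k\ge 2$. For instance, independence (i) is immediate from Lemma \ref{SG}(i) applied to $\G$ (the sets $V_{0,1},V_{0,0}$ share $\G$-value $0$, and $V_{1,0},V_{1,1}$ share $\G$-value $1$); (ii) holds because a terminal position is a $(0,1)$-position; and the ``movable to / not movable to'' clauses of (iii)--(vi) follow by reading the normal and mis\`ere mex conditions and classifying the resulting options by tameness. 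Property T(viii) is just the assertion that $G$ is $t$-miserable, which holds by Theorem \ref{tame-iff}.

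For the ``if'' direction I would, as in Theorem \ref{SMis.ss4}, fix four disjoint sets satisfying (i)--(vii) and T(viii) and prove by induction on $d(x)$ the single combined claim that \emph{every} position $x$ is a swap position or a $(k,k)$-position and, moreover, lies in $V'_{0,1}$, $V'_{1,0}$, $V'_{0,0}$, or $V'_{1,1}$ exactly when it is a $(0,1)$-, $(1,0)$-, $(0,0)$-, or $(1,1)$-position, respectively. This simultaneously shows that $G$ is tame and that the primed sets coincide with the unprimed ones. The base cases $d(x)\in\{0,1\}$ are handled as in the proof of Theorem \ref{tame-iff}: a terminal $x$ is forced into $V'_{0,1}$ by (ii), while a position all of whose options are terminal is forced into $V'_{1,0}$, because (i), (v), (vi) rule out the other three sets whereas T(viii) forbids it from lying outside all four.

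For the inductive step at $x$ with $d(x)=n+1$, I would split on which set $x$ belongs to and, using the inductive hypothesis on the options (all of which have smaller $d$), compute $M=\{\G(y)\}$ and $M^-=\{\G^-(y)\}$ over the options $y$ of $x$. Each of (iii)--(vi) is tailored to pin down these option-sets: e.g. if $x\in V'_{0,1}$ then (iii) and (i) force every option to be a $(1,0)$- or a $(k,k)$-position with $k\ge 2$, whence $0\notin M$ and $0\in M^-$, $1\notin M^-$, giving $\G(x)=0$ and $\G^-(x)=1$; the cases $x\in V'_{1,0}$, $x\in V'_{0,0}$, $x\in V'_{1,1}$ are entirely analogous, using (iv), (v), (vi), respectively. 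When $x$ lies in none of the four sets, T(viii) forces $(\mathfrak{c}')$ or $(\mathfrak{e}')$, so both $M$ and $M^-$ contain $\{0,1\}$.

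The step I expect to be the crux is precisely this last case: I must show $\G(x)=\G^-(x)$, i.e. $M=M^-$. Here the inductive hypothesis is essential: every option is a swap or a $(k,k)$-position, so for each value $m\ge 2$ the only options realizing $\G=m$ are $(m,m)$-positions, which realize $\G^-=m$ as well (and conversely); combined with $\{0,1\}\subseteq M\cap M^-$ this yields $M=M^-$ and hence $\G(x)=\operatorname{mex}(M)=\operatorname{mex}(M^-)=\G^-(x)$, with common value $\ge 2$. This is the same mechanism as in Theorem \ref{SMis.ss4}, the only extra bookkeeping being that the values $0$ and $1$ can now be supplied either by a pair of swap options (under $(\mathfrak{c}')$) or by a $(0,0)$/$(1,1)$ pair (under $(\mathfrak{e}')$). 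Once the induction is complete, $G$ is tame by Definition \ref{D.DTP} (equivalently by Theorem \ref{tame-iff}, since T(viii) now reads as $t$-miserability for the genuine sets), and $V'_{i,j}=V_{i,j}$ for all four indices.
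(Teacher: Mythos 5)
Your proposal is correct and takes essentially the approach the paper intends: the paper omits this proof entirely, stating only that it goes ``in a similar way'' to Theorem~\ref{SMis.ss4}, and your induction on $d(x)$ with base cases $d(x)\le 1$ and the $M=M^-$ mex argument for positions outside the primed sets is exactly that template adapted to the four sets (your observation that (iii)--(vi) pin down the option types, and hence the SG pair, in each membership case is the right adaptation). The only cosmetic difference is that you case-split on which primed set $x$ belongs to and deduce its $(\G,\G^-)$ type, whereas the proof of Theorem~\ref{SMis.ss4} runs the implication in the other direction (actual type implies membership); since the five membership cases are mutually exclusive and yield distinct type conclusions, the two organizations are equivalent.
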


%%%%%%%%%%%%%%%%%%%%%%%%%%%%%%%%%%%%%%%%%%%%%%%%%%%%%%%%%%%%%%%%%%%%%%%%%%%%%%%%%%%%%%%%
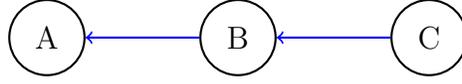
\begin{figure}[ht]
\begin{center}
\begin{tikzpicture}[style={draw=blue,thick,circle,inner sep=0pt}]   %[>=latex]
 \node at (0in,0in) [shape=circle,minimum size=1cm,draw=black,thick] (X) {A};
 \node at (1in,0in) [shape=circle,minimum size=1cm,draw=black,thick] (Y) {B};
 \node at (2in,0in) [shape=circle,minimum size=1cm,draw=black,thick] (Z) {C};
\draw [->] (Z) to (Y);
\draw [->] (Y) to (X);
\end{tikzpicture}
\end{center}
\caption{\label{remark}$V_{0,1} \neq V'_{0,1}$, although conditions \rm{(i)} - \rm{(viii)} of Theorem \ref{tame.ss4} hold.}
\end{figure}

% \begin{remark}
One may ask, whether conditions \rm{(i)} - \rm{(vii)} of Theorem \ref{tame.ss4} themselves
result in equalities
$V'_{0,1} = V_{0,1}$ and $V'_{1,0} = V_{1,0}$ too.
This is not the case.
The game in Figure \ref{remark} provides a counterexample with setting
$V'_{0,1} = A$, $V'_{1,0} = B$, and $C \in V_{0,1} \setminus V'_{0,1} \neq \emptyset$.
% \end{remark}

%%%%%%%%%%%%%%%%%%%%%%%%%%%%%%%%%%%%%%%%%%%%%%%%%%%%%%%%%%%%%%%%%%%%%%%%%%%%%%%%%%%%%%%%
%%%%%%%%%%%%%%%%%%%%%%%%%%%%%%%%%%%%%%%%%%%%%%%%%%%%%%%%%%%%%%%%%%%%%%%%%%%%%%%%%%%%%%%%

%\begin{remark} \label{tame.ss4.rem}
%Theorem \ref{tame.ss4.cor} holds if the conditions \rm{(i)}-\rm{(vii)} are replaced by following conditions:
%$V_T \subseteq V'_{0,1} \subseteq V_{0,1}$, $V'_{1,0} \subseteq V_{1,0}$, $V'_{0,0} \subseteq V_{0,0}$, $V'_{1,1} \subseteq V_{1,1}$,
%\end{remark}

%%%%%%%%%%%%%%%%%%%%%%%%%%%%%%%%%%%%%%%%%%%%%%%%%%%%%%%%%%%%%%%%%%%%%%%%%%%%%%%%%%%%%%%%
%%%%%%%%%%%%%%%%%%%%%%%%%%%%%%%%%%%%%%%%%%%%%%%%%%%%%%%%%%%%%%%%%%%%%%%%%%%%%%%%%%%%%%%%

\subsection{Domestic games}

Finally, a similar characterization holds for the domestic games.

%%%%%%%%%%%%%%%%%%%%%%%%%%%%%%%%%%%%%%%%%%%%%%%%%%%%%%%%%%%%%%%%%%%%%%%%%%%%%%%%%%%%%%%%%%%%%%%%%%%%%%%%%
%%% DOMESTIC GAME: corollary

\begin{theorem} \label{Dos.ss4}
A game is domestic if and only if there exist three disjoint sets
$V'_{0,1}$, $V'_{1,0}$, and $V'_{0,0}$ such that the following conditions hold:
\begin{itemize}\itemsep0em
\item [\rm{(i)}]  all three sets are independent;
\item [\rm{(ii)}] $V'_{0,1}$ contains all terminal positions;
\item [\rm{(iii)}] if $x \in V'_{0,1}$ is non-terminal, $x$ is movable to $V'_{1,0}$ but not to $V'_{0,0}$;
\item [\rm{(iv)}]  If $x \in V'_{1,0}$, $x$ is movable to $V'_{0,1}$ but not to $V'_{0,0}$;
\item [\rm{(v)}]   If $x \in V'_{0,0}$, $x$ is not movable to $V'_{0,1} \cup V'_{1,0}$;
\item [\rm{(vi)}]  If $x \notin V'_{0,1} \cup V'_{1,0} \cup V'_{0,0}$, $x$ is movable to $V'_{0,1} \cup V'_{1,0} \cup V'_{0,0}$;
\item [\rm{D(vii)}]  every position $x$ satisfies at least one of conditions
    \begin{itemize} \itemsep0em
    \item [$(\mathfrak{a}')$] $x \in V'_{0,1} \cup V'_{1,0}$;
    \item [$(\mathfrak{b}')$] $x$ is not movable to $V'_{0,1} \cup V'_{1,0}$;
    \item [$(\mathfrak{c}')$] $x$ is movable to $V'_{0,1}$ and to $V'_{1,0}$;
    \item [$(\mathfrak{c_0}')$] $x$ is movable to $V'_{0,1}$ and to $V'_{0,0}$;
    \item [$(\mathfrak{c_1}')$] $x$ is movable to $V'_{1,0}$ and to $V'_{0,0}$.
    \end{itemize}
\end{itemize}
Moreover, if all above conditions hold then
$V'_{0,1} = V_{0,1}$, $V'_{1,0} = V_{0,1}$, and $V_{0,0} = V_{0,0}$.
\qed
\end{theorem}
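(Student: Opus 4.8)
The plan is to follow the pattern of the proof of Theorem~\ref{SMis.ss4}, now carrying the extra set $V'_{0,0}$ alongside $V'_{0,1}$ and $V'_{1,0}$, and replacing the exclusive clause SM(v) by the inclusive clause D(vii). For the ``only if'' direction I would simply put $V'_{0,1}=V_{0,1}$, $V'_{1,0}=V_{1,0}$, $V'_{0,0}=V_{0,0}$ and verify (i)--(vi): independence (i) is immediate because a move strictly changes $\G$ (Lemma~\ref{SG}); (ii) holds since terminals are $(0,1)$-positions; the movability clauses (iii)--(iv) follow from Lemma~\ref{V01V10} together with the remark that a position of $\G$-value $0$ (resp.\ of $\G^-$-value $0$) has no option of $\G$-value $0$ (resp.\ $\G^-$-value $0$), which rules out the forbidden moves into $V_{0,0}$; (v)--(vi) are read off in the same way; and D(vii) is exactly weak miserability, which holds by Theorem~\ref{D=WM}.

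The substance is the ``if'' direction, where the goal reduces, as in Theorem~\ref{SMis.ss4}, to proving the equalities $V'_{0,1}=V_{0,1}$, $V'_{1,0}=V_{1,0}$, $V'_{0,0}=V_{0,0}$; once these are known, D(vii) becomes literally the definition of weak miserability and the game is domestic by Theorem~\ref{D=WM}. I would argue by induction on $d(x)$, handling the base cases $d(x)\le 1$ as in Theorem~\ref{SMis.ss4} (a terminal position lies in $V'_{0,1}$ by (ii); a position with $d(x)=1$ is forced into $V'_{1,0}$ by (i) and D(vii)). For the forward inclusions $V_{i,j}\subseteq V'_{i,j}$ the recursions for $\G$ and $\G^-$ combine cleanly with (iii)--(vi): for instance, if $x$ is a $(0,0)$-position then it has no option of $\G$-value $0$ and none of $\G^-$-value $0$, so by the inductive hypothesis $x$ is movable neither to $V'_{0,1}$, nor to $V'_{1,0}$, nor to $V'_{0,0}$; clause (vi) then forces $x\in V'_{0,1}\cup V'_{1,0}\cup V'_{0,0}$, while (iii) and (iv) exclude the first two sets, leaving $x\in V'_{0,0}$.

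The step I expect to be the real obstacle is the reverse inclusion $V'_{0,1}\subseteq V_{0,1}$ (and, symmetrically, $V'_{1,0}\subseteq V_{1,0}$), i.e.\ showing that a position placed in $V'_{0,1}$ genuinely has $\G^-=1$. In Theorem~\ref{SMis.ss4} this was delivered for free by the \emph{exclusivity} of SM(v): a non-swap position satisfies $(\mathfrak{c}')$ and hence cannot also satisfy $(\mathfrak{a}')$, so no member of $V'_{0,1}$ can be a $(k,k)$-position. Here D(vii) is only an ``at least one'' clause, so this dichotomy is lost; and, unlike the four-set characterization of Theorem~\ref{tame.ss4}, there is no set $V'_{1,1}$ whose exclusion in clause (iii) would prevent a $V'_{0,1}$-position from moving into a $(1,1)$-type position. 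Concretely, a position $x$ with $\G(x)=0$ that is movable to a $(1,0)$-position and to a $(1,1)$-position has $\G^-(x)=2$, yet from the vantage point of the three primed sets it is indistinguishable from a legitimate $(0,1)$-position, and it can be absorbed into $V'_{0,1}$ without violating any of (i)--(vi) or D(vii); the very same configuration also blocks the inductive proof that the game has no $(0,k)$-position with $k\ge 2$. Closing this gap therefore seems to require tracking the $(1,1)$-positions explicitly --- that is, reinstating a fourth set with its movability clauses exactly as in Theorem~\ref{tame.ss4}, or strengthening (iii)--(iv) so that every option of a $V'_{0,1}$- or $V'_{1,0}$-position is again constrained --- and I would treat this reconciliation of the $(1,1)$-positions as the crux of the argument.
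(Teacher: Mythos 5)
Your diagnosis of the crux is correct, and the obstacle you flag is not a defect of your attempt: the paper contains no proof of Theorem~\ref{Dos.ss4} at all (it is stated with its proof ``left to the reader'' as being similar to that of Theorem~\ref{SMis.ss4}), and in fact no such proof can exist, because the ``if'' direction and the final ``moreover'' clause are false as stated. The blocking configuration you describe can be realized concretely. Take positions $t,a,b,d,c,x$ with moves $a\to t$; $b\to a,t$; $d\to b$; $c\to d$; $x\to a,c$. Then $t,a,b,d,c$ are, respectively, $(0,1)$-, $(1,0)$-, $(2,2)$-, $(0,0)$-, $(1,1)$-positions, while $x$ is a $(0,2)$-position, so the game is not domestic. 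Yet the sets $V'_{0,1}=\{t,x\}$, $V'_{1,0}=\{a\}$, $V'_{0,0}=\{d\}$ satisfy every hypothesis: each set is independent; $x\to a\in V'_{1,0}$ and $x$ cannot reach $d$, giving (iii); $a\to t$ gives (iv); $d\to b$ gives (v); $b\to a$ and $c\to d$ give (vi); and in D(vii) the positions $t,a,x$ satisfy $(\mathfrak{a}')$, $b$ satisfies $(\mathfrak{c}')$, and $c,d$ satisfy $(\mathfrak{b}')$. The move $x\to c$, which forces $\G^-(x)=2$, violates no condition --- exactly the mechanism you predicted, and it simultaneously kills the equality $V'_{0,1}=V_{0,1}$.

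One correction to your proposed repair, however: reinstating a fourth set $V'_{1,1}$ with the movability clauses of Theorem~\ref{tame.ss4} is still insufficient, because a domestic game may also contain $(k,1)$- and $(1,k)$-positions with $k\geq 2$, and these are just as invisible to the primed sets. Replace $c$ above by a position $e$ with moves $e\to a$ and $e\to d$; then $e$ is a $(2,1)$-position and $x$ (with moves $x\to a,e$) is again a $(0,2)$-position, yet $V'_{0,1}=\{t,x\}$, $V'_{1,0}=\{a\}$, $V'_{0,0}=\{d\}$, $V'_{1,1}=\emptyset$ pass conditions (i)--(vi) augmented by the clauses of Theorem~\ref{tame.ss4} governing $V'_{1,1}$ (with (iii), (iv) strengthened to forbid moves into $V'_{0,0}\cup V'_{1,1}$), together with D(vii): the $V'_{1,1}$-clauses are vacuous and $e$ satisfies $(\mathfrak{c_1}')$ --- and note that here $V'_{1,1}=V_{1,1}$ is even the \emph{correct} set. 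The true obstruction is that in a domestic game the positions with $\G^-=1$ (dually, with $\G=1$) are not exhausted by $V_{0,1}\cup V_{1,1}$, unlike in the pet and tame cases where every position is a swap or a $(k,k)$-position; so no finite list of ``value-$0/1$'' sets closed off by clauses like (iii)--(vi) can certify membership in $V_{0,1}$. Only your second suggestion is viable: a correct statement must constrain \emph{all} options of positions in $V'_{0,1}$ and $V'_{1,0}$, e.g.\ by introducing certifying sets for $\{x \mid \G^-(x)=1\}$ and $\{x \mid \G(x)=1\}$, rather than merely forbidding moves into the named sets. Apart from this, your ``only if'' direction is sound as written.
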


%%%%%%%%%%%%%%%%%%%%%%%%%%%%%%%%%%%%%%%%%%%%%%%%%%%%%%%%%%%%%%%%%%%%%%%%%%%%%%%%%%%%%%%%
%%%%%%%%%%%%%%%%%%%%%%%%%%%%%%%%%%%%%%%%%%%%%%%%%%%%%%%%%%%%%%%%%%%%%%%%%%%%%%%%%%%%%%%%

%%%%%%%%%%%%%%%%%%%%%%%%%%%%%%%%%%%%%%%%%%%%%%%%%%%%%%%%%%%%%%%%%%%%%%%%%%%%%%%%%%%%%%%%%
%%%%%%%%%%%%%%%%%%%%%%%%%%%%%%%%%%%%%%%%%%%%%%%%%%%%%%%%%%%%%%%%%%%%%%%%%%%%%%%%%%%%%%%%%
%%%%%%%%%%%%%%%%%%%%%%%%%%%%%%%%%%%%%%%%%%%%%%%%%%%%%%%%%%%%%%%%%%%%%%%%%%%%%%%%%%%%%%%%%
%%%%%%%%%%%%%%%%%%%%%%%%%%%%%%%%%%%%%%%%%%%%%%%%%%%%%%%%%%%%%%%%%%%%%%%%%%%%%%%%%%%%%%%%%
%%%%%%%%%%%%%%%%%%%%%%%%%%%%%%%%%%%%%%%%%%%%%%%%%%%%%%%%%%%%%%%%%%%%%%%%%%%%%%%%%%%%%%%%%

\section{Sums of games}
\label{S.Sum}

We say that a class of games is preserved under summation
if the sum of games from this class belongs to it too.
In this section, we show the classes of
tame, miserable, miserable and forced, miserable and returnable games are preserved under summation.
For the tame games, this property was claimed by Conway in \cite{Con76} and proven in \cite{CGT};
we suggest a simpler proof.

In contrast, the classes of domestic (weakly miserable) and of pet
(strongly miserable) games are not preserved under summation.
Already the classic  $n$-pile  {\sc Nim} is a counterexample for the second case.
Indeed, one-pile {\sc Nim} is pet but the  $n$-pile {\sc Nim},
which is the sum of  $n$  one-pile {\sc Nim} games, is not whenever  $n > 1$;
see Subsection \ref{nim} for more details.

The sum of domestic games may be not domestic; Figure \ref{Sodo} gives an example.
\begin{figure}[ht]
\begin{center}
\begin{tikzpicture} [style={draw=blue,thick,circle,inner sep=0pt}]   %[>=latex]
% LEFT GAME
\node at (-2.1in,0in)(G) {G$_1$};
%\node at (-2in,.5in)  (A) {A(0,1)};
\node at (-2in,.5in) [shape=circle,minimum size=.6cm,draw=black,thick] (A) {0,1};
\node at (-2.25in,.5in) {A};
%\node at (-2.3in,1in)  (B) {B$(1,0)$};
\node at (-2.3in,1in) [shape=circle,minimum size=.6cm,draw=black,thick] (B) {1,0};
\node at (-2.55in,1in) {B};
%\node at (-2in,1.5in)  (C) {C$(2,2)$};
\node at (-2in,1.5in) [shape=circle,minimum size=.6cm,draw=black,thick] (C) {2,2};
\node at (-2.25in,1.5in) {C};
%\node at (-2.3in,2in)  (D) {D$(0,0)$};
\node at (-2.3in,2in) [shape=circle,minimum size=.6cm,draw=black,thick] (D) {0,0};
\node at (-2.55in,2in) {D};
%\node at (-2in,2.5in)  (E) {E$(1,2)$};
\node at (-2in,2.5in) [shape=circle,minimum size=.6cm,draw=black,thick] (E) {1,2};
\node at (-2.25in,2.5in) {E};
\draw [->] (E) to (D);
\draw [->] (E) to [out=-70,in=70] (A);
\draw [->] (D) to (C);
\draw [->] (C) to (B);
\draw [->] (C) to (A);
\draw [->] (B) to (A);
%%%%%%%%%%%%%%%%%%%%%%%%%%%%%%%%%%%%%%%%%%%%%%%%%%%%%%%%%%
\node at (-1.5in,0in) (a) {};
\node at (-1.5in,3in) (b) {};
\draw [.] (a) to (b);
%%%%%%%%%%%%%%%%%%%%%%%%%%%%%%%%%%%%%%%%%%%%%%%%%%%%%%%%%%%
% MIDDLE GAME
\node at (-1in,0in) (G2) {G$_2$};
%\node at (-1in,.5in)  (X) {X(0,1)};
\node at (-1in,.5in) [shape=circle,minimum size=.6cm,draw=black,thick] (X) {0,1};
\node at (-1.25in,.5in) {X};
%\node at (-1in,1in)  (Y) {Y$(1,0)$};
\node at (-1in,1in) [shape=circle,minimum size=.6cm,draw=black,thick] (Y) {1,0};
\node at (-1.25in,1in) {Y};
\draw [->] (Y) to (X);
\node at (-.5in,0in)   (c) {};
\node at (-.5in,3in) (d) {};
\draw [.] (c) to (d);
%%%%%%%%%%%%%%%%%%%%%%%%%%%%%%%%%%%%%%%%%%%%%%%%%%%%%%%%%%%
% RIGHT GAME
\node at (  1in,  0in) (G12){$G_1+G_2$};
%\node at (1in,0.5in) (AX) {AX$(0,1)$};
\node at (1in,0.5in) [shape=circle,minimum size=.6cm,draw=black,thick] (AX) {0,1};
\node at (.7in,0.5in) {AX};
%\node at (.5in,1.0in) (BX) {BX$(1,0)$};
\node at (.5in,1.0in) [shape=circle,minimum size=.6cm,draw=black,thick] (BX) {1,0};
\node at (.2in,1.0in) {BX};
%\node at (.5in,1.5in) (CX) {CX$(2,2)$};
\node at (.5in,1.5in) [shape=circle,minimum size=.6cm,draw=black,thick] (CX) {2,2};
\node at (.2in,1.5in) {CX};
%\node at (.5in,2.0in) (DX) {DX$(0,0)$};
\node at (.5in,2.0in) [shape=circle,minimum size=.6cm,draw=black,thick] (DX) {0,0};
\node at (.2in,2.0in) {DX};
%\node at (.5in,2.5in) (EX) {EX$(1,2)$};
\node at (.5in,2.5in) [shape=circle,minimum size=.6cm,draw=black,thick] (EX) {1,2};
\node at (.2in,2.5in) {EX};
%%%%%%%%%%%%%%%%%%%%%%%%%%
%\node at (1.5in,1.0in) (AY) {AY$(1,0)$};
\node at (1.5in,1.0in) [shape=circle,minimum size=.6cm,draw=black,thick] (AY) {1,0};
\node at (1.2in,1.0in) {AY};
%\node at (1.5in,1.5in) (BY) {BY$(0,1)$};
\node at (1.5in,1.5in) [shape=circle,minimum size=.6cm,draw=black,thick] (BY) {0,1};
\node at (1.2in,1.5in) {BY};
%\node at (1.5in,2.0in) (CY) {CY$(3,3)$};
\node at (1.5in,2.0in) [shape=circle,minimum size=.6cm,draw=black,thick] (CY) {3,3};
\node at (1.2in,2.0in) {CY};
%\node at (1.5in,2.5in) (DY) {DY$(1,1)$};
\node at (1.5in,2.5in) [shape=circle,minimum size=.6cm,draw=black,thick] (DY) {1,1};
\node at (1.2in,2.5in) {DY};
%\node at (1in,3.0in) (EY) {EY$(0,3)$};
\node at (1in,3.0in) [shape=circle,minimum size=.6cm,draw=black,thick] (EY) {0,3};
\node at (.7in,3.0in) {EY};
%%%%%%%%%%%%%%%%%%%%%%%%%%%
\draw [->] (BX) to (AX);
\draw [->] (CX) to (BX);
\draw [->] (CX) to [out=-60,in=100] (AX);
\draw [->] (DX) to (CX);
\draw [->] (EX) to (DX);
\draw [->] (EX) to  [out=-50,in=90] (AX);
%%%%%%%%%%%%%%%%%%%%%%%%%%%
\draw [->] (BY) to (AY);
\draw [->] (CY) to (BY);
\draw [->] (CY) to [out=-45,in=45] (AY);
\draw [->] (DY) to (CY);
\draw [->] (EY) to (DY);
\draw [->] (EY) to  [out=-90,in=130] (AY);
%%%%%%%%%%%%%%%%%%%%%%%%%%%
\draw [->] (AY) to (AX);
\draw [->] (BY) to (BX);
\draw [->] (CY) to (CX);
\draw [->] (DY) to (DX);
\draw [->] (EY) to (EX);
\end{tikzpicture}
\end{center}
\caption{Games $G_1$ and $G_2$ are domestic but their sum $G_1 + G_2$ is not.
Notation  $P(i,j)$  means that  $P$ is an $(i,j)$-position in a summand, while
$PQ(i,j)$ means that the sum  $PQ$  of  $P$ and $Q$  is an $(i,j)$-position.} \label{Sodo}
\end{figure}
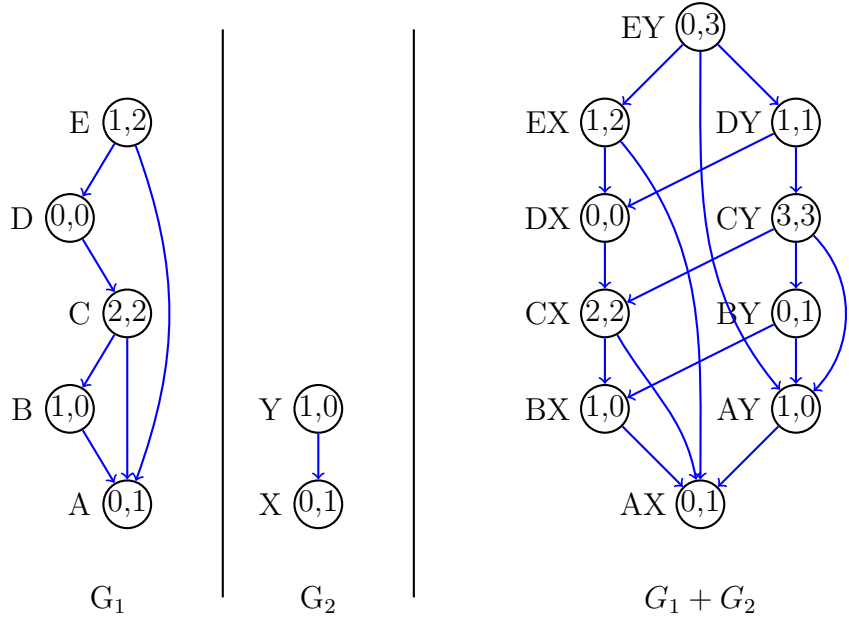

%%%%%%%%%%%%%%%%%%%%%%%%%%%%%%%%%%%%%%%%%%%%%%%%%%%%%%%%%%%%%%%%%%%%%%%%%%%%%%%%%%%%%%%%%
%%%%%%%%%%%%%%%%%%%%%%%%%%%%%%%%%%%%%%%%%%%%%%%%%%%%%%%%%%%%%%%%%%%%%%%%%%%%%%%%%%%%%%%%%
%%%%%%%%%%%%%%%%%%%%%%%%%%%%%%%%%%%%%%%%%%%%%%%%%%%%%%%%%%%%%%%%%%%%%%%%%%%%%%%%%%%%%%%%%
\subsection{The sum of tame games is tame}

%%%%%%%%%%%%%%%%%%%%%%%%%%%%%%%%%%%%%%%%%%%%%%%%%%%%%%%%%%%%%%%%%%%%%%%%%%%%%%%%%%%%%%%%%

Recall that a {\it swap} position is either a $(0,1)$-position or a $(1,0)$-position.
We will call two swap positions {\it opposite} if
one of them is a $(0,1)$-position while the other is a $(1,0)$-position, and
we will call them {\it parallel} otherwise.

\begin{theorem} % (\cite{Con76}; see page 178).
\label{S.T}
If games  $G_1$ and $G_2$ are tame then their sum $G_1 + G_2$  is tame too.
Moreover, $x = (x_1, x_2)$ is a swap position of  $G_1 + G_2$  if and only if
$x_i$  is a swap position of $G_i$  for  $i = 1,2$.
% and  $x_2$ is a swap position of  $G_2$.
Furthermore, $x$ is a $(1,0)$-position of $G$ if and only if either $x_1$ is a $(1,0)$-position in $G_1$ and $x_2$ is a $(0,1)$-position in $G_2$ or vise versa.
\end{theorem}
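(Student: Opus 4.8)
The plan is to prove, by induction on $d(x)$ where $x=(x_1,x_2)$, a statement stronger than tameness that simultaneously controls the normal and the mis\`ere value of every position of $G = G_1+G_2$. The invariant is: (a) $\G(x)=\G(x_1)\oplus\G(x_2)$; (b) $x$ is a swap position of $G$ if and only if both $x_1$ and $x_2$ are swap positions of their games; and (c) $\G^-(x)=\G(x)$ when $x$ is not a swap position, while $\G^-(x)=1-\G(x)$ when $x$ is a swap position. Part (a) is just Theorem \ref{S.SG}; the real content is that the mis\`ere value obeys the same rule as the normal one away from swap positions and is obtained by exchanging $0$ and $1$ at swap positions, so that, unlike for general sums, $\G^-$ stays under control. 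Statement (b) yields the ``swap iff both swap'' claim at once, and combining (b) with (c) (a swap position has $\G=0$ exactly when its components are parallel and $\G=1$ exactly when opposite) yields the final sentence about $(1,0)$-positions.

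Before the induction I would record the option structure of swap positions inside a single tame game, extracted from Lemma \ref{SG}, its mis\`ere analogue, Lemma \ref{V01V10}, and Definition \ref{D.DTP}. Namely, in a tame game the options of a non-terminal $(0,1)$-position are exactly $(1,0)$-positions and $(k,k)$-positions with $k\ge2$, at least one of them being a $(1,0)$-position; dually, the options of a $(1,0)$-position are $(0,1)$-positions and $(k,k)$-positions with $k\ge2$, at least one being a $(0,1)$-position. Moreover a $(0,0)$- or $(1,1)$-position has no swap option at all, and all of its options have normal and mis\`ere value $\ge1$. These are facts about $G_1,G_2$ directly and need no induction.

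The induction splits according to whether at least one component is non-swap. If, say, $x_2$ is non-swap, every option obtained by moving in $G_1$ is again non-swap by (b) at smaller depth, hence contributes the same value to the normal value-set $N$ and the mis\`ere value-set $M$ of the options of $x$; the only possible discrepancy comes from swap options reached by moving in $G_2$. When $x_2$ is a $(0,0)$- or $(1,1)$-position it has no swap option, so $M=N$ and $\G^-(x)=\G(x)$. When $x_2$ is a $(k_2,k_2)$-position with $k_2\ge2$ I would show $\{0,1\}\subseteq N$ (since $\G(x)=\G(x_1)\oplus k_2\ge2$) and $\{0,1\}\subseteq M$ (each option of $x_2$ of mis\`ere value $0$ or $1$, whether swap or not, contributes $\G(x_1)$ respectively $1-\G(x_1)$ to $M$), while $N$ and $M$ agree on all values $\ge2$; hence $\operatorname{mex}(M)=\operatorname{mex}(N)$ and again $\G^-(x)=\G(x)\ge2$. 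This ``$k_2\ge2$'' subcase, where swap options flip $0\leftrightarrow1$ asymmetrically yet the two value-sets keep the same $\operatorname{mex}$, is the main obstacle and the crux of the argument.

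In the remaining case both $x_1,x_2$ are swap, and I would compute $\G^-(x)$ explicitly from the recorded option structure: each option moves to an opposite swap pair, a parallel swap pair, or a mixed swap/non-swap pair, which by the inductive hypothesis have mis\`ere values $0$, $1$, and some value $\ge2$, respectively. If $x_1,x_2$ are parallel then $\G(x)=0$ and every option has mis\`ere value in $\{0\}\cup\{v:v\ge2\}$ with $0$ attained via the guaranteed opposite option, so $\operatorname{mex}=1$ and $x$ is a $(0,1)$-position; if they are opposite then $\G(x)=1$ and every option has mis\`ere value $\ge1$, so $\operatorname{mex}=0$ and $x$ is a $(1,0)$-position. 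The base case $d(x)=0$ (both components terminal, hence both $(0,1)$, giving the terminal $(0,1)$-position of the sum) starts the induction, and these two cases close it, establishing (a)--(c) and therefore the theorem.
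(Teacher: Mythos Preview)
Your argument is correct and takes a genuinely different route from the paper. The paper proves Theorem~\ref{S.T} by invoking the constructive characterization of tameness (Theorem~\ref{tame.ss4}): it writes down explicit candidate sets $V'_{0,1},V'_{1,0},V'_{0,0},V'_{1,1}$ in $G_1+G_2$ in terms of the component types, checks conditions (i)--(vii), and then runs an induction on $d(x)$ to verify condition T(viii). You instead carry out a direct induction on $d(x)$ proving the sharper invariant $\G^-(x)=\G(x)$ off the ``both swap'' locus and $\G^-(x)=1-\G(x)$ on it. Your approach is more elementary (it does not need Theorem~\ref{tame.ss4} at all) and yields the explicit form of $\G^-$ on the sum as a by-product; the paper's approach, on the other hand, illustrates how its abstract machinery is meant to be used, which is the narrative purpose at that point.

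Two small points of presentation. First, the clause ``all of its options have normal and mis\`ere value $\ge1$'' is false for $(1,1)$-positions (they may have $(0,0)$-options); fortunately you only use the ``no swap option'' part, so just drop the extra clause. Second, in the subcase $x_2\in V_{k_2,k_2}$ with $k_2\ge2$ you write $\G(x)=\G(x_1)\oplus k_2\ge2$, which requires $\G(x_1)\le1$. This is harmless because if $x_1$ is itself non-swap then no option of $x$ is swap and $M=N$ trivially; make explicit that in this subcase you may assume $x_1$ is a swap position, and the inequality then follows.
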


The first claim was stated (without a proof) in 1976 by Conway; see \cite{Con76} page 178.
A proof based on the genus theory appeared in \cite{CGT}.
Here we give an alternative proof based on the characterization
of tame games by Theorem \ref{tame.ss4}.

%%%%%%%%%%%%%%%%%%%%%%%%%%%%%%%%%%%%%%%%%%%%%%%%%%%%%%%%%%%%%%%%%%%%%%%%%%%%%%%%%%%%%%%%%

\smallskip

\begin{proof}  %[proof of ]
For non-negative integers $i, j, k$ and $l$, denote
by $[(i,j),(k,l)]$ the set of positions  $x = (x_1,x_2)$ in the sum
$G_1 + G_2$ such that $x_1$ is an $(i,j)$-position in $G_1$ and $x_2$ is a $(k,l)$-position in $G_2$.
Let us set
\begin{align*}
V'_{0,1} = \{&[(0,1),(0,1)], [(1,0),(1,0)]\}, \\
V'_{1,0} = \{&[(0,1),(1,0)], [(1,0),(0,1)]\}, \\
V'_{0,0} = \{&[(0,1),(0,0)], [(0,0),(0,1)],[(n,n),(n,n)] \mid n \in \Z_{\geq 0}\}, \\
V'_{1,1} = \{&[(0,0),(1,0)], [(1,0),(0,0)],[(0,1),(1,1)],[(1,1),(0,1)], \\
             &[(n,n),(n+1,n+1)], [(n+1,n+1),(n,n)] \mid n = 2k,  k\in \Z_{\geq 0}\}
\end{align*}
Recall that $\Z_{\geq 0}$ denotes the set of non-negative integers.

It can be verified that the above four sets satisfy conditions \rm{(i)} - \rm{(vii)}  of Theorem \ref{tame.ss4}.
We now prove by induction on $d(x)$ that every position $x$ of the sum $G = G_1+G_2$ satisfies
(at least) one of the conditions
$(\mathfrak{a_0}'), (\mathfrak{c}'), (\mathfrak{e}')$ of Theorem \ref{tame.ss4} and so %By Theorem \ref{tame.ss4},
the sum is tame.

Note that in this proof, when we recall conditions
$(\mathfrak{a_0})$, $(\mathfrak{c})$, and $(\mathfrak{e})$ (resp.,~$(\mathfrak{a'_0})$, $(\mathfrak{c'})$, and $(\mathfrak{e'})$), we refer them in Definition \ref{D.WTMS} (resp.,~Theorem \ref{tame.ss4}).

By definition,  $x = (x_1,x_2)$ is a terminal position of the sum  $G = G_1 + G_2$  if and only if
each $x_i$ is a terminal position of the summand  $G_i$,  for  $i = 1,2$.
Hence,
$(\mathfrak{a_0}')$ holds for $(x_1,x_2)$.
If $d(x_1,x_2) = 1$ then either $d(x_1) = 0$ ($x_1$ is terminal) and
$d(x_2) = 1$ or vise versa and so $(x_1,x_2) \in V'_{1,0}$, meaning $(\mathfrak{a_0}')$ holds for $(x_1,x_2)$.

We assume that at least one of the conditions
$(\mathfrak{a_0}')$, $(\mathfrak{c}')$, $(\mathfrak{e}')$ holds
for every position $(x_1,x_2)$ in $G$ such that $d(x_1,x_2) \leq n$ for some $n \geq 1$
and will show that at least one of these conditions
%%% $(\mathfrak{a_0}')$, $(\mathfrak{c}')$, and $(\mathfrak{e}')$
holds for each position $(x_1,x_2)$ in $G$ such that $d(x_1,x_2) = n+1$.

Suppose that $(\mathfrak{a_0}')$ fails for $x = (x_1,x_2)$.
Then there exists a move from $x$ to a position $x' \in V'_{0,1} \cup V'_{1,0} \cup V'_{0,0}$,
by  \rm{(vii)} of Theorem \ref{tame.ss4}.
Assume such move $x_1 \rightarrow x'_1$ is made in $G_1$.

\begin{enumerate}
\item Case  $x' = (x'_1,x_2) \in V'_{0,1}$.
In this case  $x'_1$ and $x_2$ are two parallel swap positions.
% Consider $x_1$.
Since $x_1$ is movable to the swap position $x'_1$, condition $(\mathfrak{a_0})$  fails for $x_1$
and, hence, $(\mathfrak{c})$ or $(\mathfrak{e})$ holds for $x$, since $G_1$ is tame.
    \begin{itemize}\itemsep0em
    \item [$(a)$]  If $(\mathfrak{c})$ holds for $x_1$ then $x_1$ is movable to a position $x''_1$ such that $x'_1$ and $x''_1$ are two opposite swap positions, then  $x''_1$ and $x_2$ are two opposite swap positions and, hence,
        $x'' = (x''_1,x_2) \in V'_{1,0}$, by definition.
        Recall that $x$ can also be moved to  $x' \in V'_{0,1}$. Then  $(\mathfrak{c}')$ holds for $x$.
    \item [$(b)$]  If $(\mathfrak{e})$ holds for $x_1$ then $x_1$ is movable to some $(0,0)$-position $x'''_1$ and to some $(1,1)$-position $x''''_1$.
    It is  not difficult to verify  that
    one of these two positions
    % $x''' = (x'''_1,x_2)$ and $x'''' = (x''''_1,x_2)$
    belongs to $V'_{0,0}$, while the other to $V'_{1,1}$ and, hence, $(\mathfrak{e'})$ holds for $x$.
    \end{itemize}
\item  Case $x' = (x'_1,x_2) \in V'_{1,0}$  is  % treated essentially the same as
similar to the case $(1)$:
just swapping ``opposite" and ``parallel", as well as ``0,1" and ``1,0".
\item Case $x' = (x'_1,x_2) \in V'_{0,0}$. Consider the following three options for $x'$:
    \begin{enumerate}\itemsep0em
    \item If $(x'_1,x_2) \in [(0,0),(0,1)]$ then either $x_1$ is a (1,1)-position or $(\mathfrak{a_0})$ fails for $x_1$.
    Yet, the former case cannot occur as otherwise, $x = (x_1, x_2) \in V'_{1,1}$, giving a contradiction.
     In the latter case, either $(\mathfrak{c})$ or $(\mathfrak{e})$ holds for $x_1$, since  $G_1$ is tame.
     It is  easily seen that
     if $(\mathfrak{c})$ $($resp.,~$(\mathfrak{e}))$ holds for $x_1$ then $(\mathfrak{c}')$ $($resp.,~$(\mathfrak{e}'))$ holds for $x$.
    \item Case $x' = (x'_1,x_2) \in [(0,1),(0,0)]$  is similar to the case
    $(x'_1,x_2) \in [(0,0),(0,1)]$ treated in \rm{(a)}.
        %If $(x'_1,x_2) \in [(0,1),(0,0)]$, then either $x_1$ is a (1,0)-position or $x_1$ does not satisfy %$(\mathfrak{a_0})$. The former case cannot occur as otherwise, $x = (x_1, x_2) \in V'_{1,1}$, giving a %contradiction. Hence the latter case occurs, implying either $(\mathfrak{c})$ or $(\mathfrak{e})$ holds for $x_1$ %as $G_1$ is tame. Regardless of which condition holding, $(\mathfrak{e}')$ holds for $x$.
    \item If both $x'_1$ and $x_2$ are  $(n,n)$-positions, we consider two possibilities for $n$: $n$ is odd and $n$ is even. By checking carefully possible cases for $n$, one can verify that $(\mathfrak{a_0}')$, or $(\mathfrak{c}')$, or $(\mathfrak{e}')$ holds for $x$. We leave the checking task to the reader.
    \end{enumerate}
\end{enumerate}
% By the principle of
By induction, we conclude that each position satisfies $(\mathfrak{a_0}')$, or $(\mathfrak{c}')$, or $(\mathfrak{e}')$
and, by Theorem \ref{tame.ss4}, sum  $G_1+G_2$  is tame.
Moreover, $V_{0,1} = V'_{0,1}$ and $V_{1,0} = V'_{1,0}$, implying that
$x = (x_1,x_2)$ is a swap position of the sum  $G_1+G_2$
if and only if
$x_i$  is a swap position of the summand  $G_i$  for  $i = 1,2$.
% is a swap position if and only if each summand is also a swap position.
\end{proof}

\medskip

The following obvious generalization results from Theorems \ref{S.T} and \ref{S.SG}.

\begin{corollary}
If games  $G_1, \ldots, G_n$  are tame then
their sum  $G =  G_1 + \ldots + G_n$  is tame too. % be the sum of $n$  tame games and
Moreover, a position  $x = (x^1,  \ldots, x^n)$  of  $G$
is a swap position of  $G$  if and only if  $x^i$  is a swap position of
$G_i$  for  $i \in  \{1, \ldots, n\}$.
Furthermore, $x$ is a $(1,0)$-position if and only if the number of $(1,0)$-positions
in the set $\{x^1, \ldots, x^n\}$  is odd.
\qed
\end{corollary}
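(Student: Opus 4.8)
The plan is to induct on $n$, peeling off one summand at a time and invoking Theorem \ref{S.T} at each step; associativity and commutativity of the disjunctive sum (noted after Theorem \ref{S.SG}) make this legitimate. The base case $n=1$ is trivial and $n=2$ is precisely Theorem \ref{S.T}. For the inductive step I would write $G = G_1 + \cdots + G_n = H + G_n$, where $H = G_1 + \cdots + G_{n-1}$. By the inductive hypothesis $H$ is tame, and by Theorem \ref{S.T} the sum of the two tame games $H$ and $G_n$ is tame; this yields the first assertion.

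For the characterization of swap positions I would again use the splitting $G = H + G_n$ and the second assertion of Theorem \ref{S.T}. Writing $x = (y, x^n)$ with $y = (x^1, \ldots, x^{n-1})$, Theorem \ref{S.T} says that $x$ is a swap position of $G$ if and only if $y$ is a swap position of $H$ and $x^n$ is a swap position of $G_n$. The inductive hypothesis identifies ``$y$ is a swap position of $H$'' with ``each $x^i$ is a swap position of $G_i$ for $i \leq n-1$'', and combining the two equivalences gives the claim for all $n$ components.

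For the parity statement I would avoid a second nested induction and instead invoke Theorem \ref{S.SG} directly. Once $x$ is known to be a swap position, so are all of its components $x^i$, whence each $\G(x^i) \in \{0,1\}$, with $\G(x^i) = 1$ exactly when $x^i$ is a $(1,0)$-position. By Theorem \ref{S.SG} we have $\G(x) = \G(x^1) \oplus \cdots \oplus \G(x^n)$, and this $\bmod 2$ sum equals $1$ precisely when an odd number of the values $\G(x^i)$ equal $1$, that is, when an odd number of the $x^i$ are $(1,0)$-positions. Since $x$ is itself a swap position, $\G(x) = 1$ is equivalent to $x$ being a $(1,0)$-position, completing the argument.

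The proof is essentially routine bookkeeping, which is why the statement is flagged as an obvious consequence; there is no genuine obstacle. The only point that repays a little care is keeping the two-valued classification (a swap position is $(0,1)$ or $(1,0)$) consistent across the associative regrouping. Handling the parity through the $\oplus$ computation of Theorem \ref{S.SG}, rather than through an iterated application of the two-summand $(1,0)$-characterization, is the cleanest way to see that the parity of the number of $(1,0)$-components is the invariant controlling whether $x$ is a $(0,1)$- or a $(1,0)$-position.
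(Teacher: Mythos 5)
Your proof is correct and follows exactly the route the paper intends: the paper gives no written-out argument, merely stating that the corollary ``results from Theorems \ref{S.T} and \ref{S.SG},'' and your induction on $n$ via Theorem \ref{S.T} together with the $\oplus$-parity computation from Theorem \ref{S.SG} is precisely the fleshing-out of that remark. Your reading of the final clause as conditional on $x$ being a swap position is also the right one (the literal unconditional biconditional would fail when some non-$(1,0)$ component is not a swap position), so no gap remains.
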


%%%%%%%%%%%%%%%%%%%%%%%%%%%%%%%%%%%%%%%%%%%%%%%%%%%%%%%%%%%%%%%%%%%%%%%%%%%%%%%%%%%%%%%%%
%%%%%%%%%%%%%%%%%%%%%%%%%%%%%%%%%%%%%%%%%%%%%%%%%%%%%%%%%%%%%%%%%%%%%%%%%%%%%%%%%%%%%%%%%
%%%%%%%%%%%%%%%%%%%%%%%%%%%%%%%%%%%%%%%%%%%%%%%%%%%%%%%%%%%%%%%%%%%%%%%%%%%%%%%%%%%%%%%%%
%%%%%%%%%%%%%%%%%%%%%%%%%%%%%%%%%%%%%%%%%%%%%%%%%%%%%%%%%%%%%%%%%%%%%%%%%%%%%%%%%%%%%%%%%

\subsection{Sums of miserable, returnable, and forced games}

\begin{theorem} \label{S.M}
If games $G_1$ and $G_2$ are miserable then their sum $G_1 + G_2$  is miserable too.
Moreover, $x = (x_1, x_2)$  is a swap position of $G_1 + G_2$
if and only if each $x_i$  is a swap position of $G_i$  for  $i = 1,2$.
% and  $x_2$ is a swap position of  $G_2$.
Furthermore, $x$ is a $(1,0)$-position of $G$ if and only
if either $x_1$ is a $(1,0)$-position in $G_1$ and $x_2$ is a $(0,1)$-position in $G_2$ or vise versa.
\end{theorem}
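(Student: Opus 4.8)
The plan is to mimic the strategy already used for the tame case in Theorem~\ref{S.T}, but relying on the miserable characterization (Theorem~\ref{Mis.ss4}) rather than the tame one, since miserable is the class we wish to preserve. The key idea is to guess the swap positions of the sum and verify that they satisfy conditions $(\mathrm{i})$--$(\mathrm{iv})$ together with the disjunctive condition ``$(\mathfrak{a}')$ or $(\mathfrak{b}')$ or $(\mathfrak{c}')$'' of Theorem~\ref{Mis.ss4}. Guided by the ``Furthermore'' clause of the statement (and by the tame proof, where the swap positions of the sum are exactly the pairs of swap positions), I would set
\[
V'_{0,1} = \{[(0,1),(0,1)],\,[(1,0),(1,0)]\}, \qquad
V'_{1,0} = \{[(0,1),(1,0)],\,[(1,0),(0,1)]\},
\]
using the bracket notation $[(i,j),(k,l)]$ from Theorem~\ref{S.T}. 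Here I use that each $G_i$ is miserable, hence tame by Theorem~\ref{MisT}, so each $x_i$ is either a swap position or a $(k,k)$-position; this makes the classification of $x=(x_1,x_2)$ by Theorem~\ref{S.SG} ($\G(x)=\G_1(x_1)\oplus\G_2(x_2)$) tractable, and it confirms that the only positions with $(\G,\G^-)\in\{(0,1),(1,0)\}$ are the pairs of opposite/parallel swap positions listed above.

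First I would check conditions $(\mathrm{i})$--$(\mathrm{iv})$ for these two sets. Independence and $V_T\subseteq V'_{0,1}$ are immediate from the definitions in each summand. For $(\mathrm{iii})$ and $(\mathrm{iv})$, a non-terminal parallel pair $[(0,1),(0,1)]$ must have a non-terminal coordinate, say $x_1$ a non-terminal $(0,1)$-position; since $G_1$ is miserable (forced at swap positions in the relevant sense, via Lemma~\ref{V01V10}), $x_1$ moves to a $(1,0)$-position $x_1'$, giving a move to $[(1,0),(0,1)]\in V'_{1,0}$, and symmetrically for the other cases. The main work is the disjunctive condition: proceeding by induction on $d(x)$ exactly as in Theorem~\ref{S.T}, I would suppose $(\mathfrak{a}')$ fails for $x=(x_1,x_2)$ and show that either $(\mathfrak{b}')$ or $(\mathfrak{c}')$ holds. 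Because $(\mathfrak{a}')$ fails, at least one coordinate, say $x_1$, is not a swap position, so (using tameness) $x_1$ is a $(k,k)$-position. Then in $G_1$ one of $(\mathfrak{a})$, $(\mathfrak{b})$, $(\mathfrak{c})$ holds; I would push each case through a coordinatewise analysis, combining a move in $G_1$ with the status of $x_2$ to land the resulting sum-position in $V'_{0,1}$ and/or $V'_{1,0}$ as needed.

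The hard part will be the case split when $(\mathfrak{c})$ holds for $x_1$ (so $x_1$ moves to both a $(0,1)$- and a $(1,0)$-position) but $x_2$ is itself a swap position: then one must verify that $x$ can reach both $V'_{0,1}$ and $V'_{1,0}$, i.e.\ that combining a $(0,1)$ or $(1,0)$ move in the first coordinate with the fixed parity contributed by $x_2$ produces opposite sum-swap-positions, yielding $(\mathfrak{c}')$. When instead every coordinate that can move lands only in ``interior'' $(k,k)$-positions so that $x$ reaches no swap position at all, one must confirm $(\mathfrak{b}')$; this requires knowing that a sum-position is a swap position exactly when both coordinates are, which is precisely the inductive claim $V'_{i,j}=V_{i,j}$ being established. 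So the genuine obstacle is organizing the induction so that the identification of swap positions and the verification of the disjunctive condition reinforce each other without circularity; once that bookkeeping is set up, each individual case is a short parity computation via $\oplus$, and the final ``Furthermore'' clause about $(1,0)$-positions falls out of the established equalities $V'_{0,1}=V_{0,1}$ and $V'_{1,0}=V_{1,0}$ together with the bracket definitions.
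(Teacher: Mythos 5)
Your proposal is correct, but it takes a genuinely different route from the paper. The paper first invokes Theorem~\ref{MisT} together with the tame summation theorem (Theorem~\ref{S.T}) to conclude that $G_1+G_2$ is tame and that its swap positions are exactly the pairs of swap positions of the summands; it then verifies conditions $(\mathfrak{a})$, $(\mathfrak{b})$, $(\mathfrak{c})$ of Definition~\ref{D.WTMS} for the sum by induction on $d(x)$, using Theorem~\ref{S.SG} and the opposite/parallel bookkeeping. You instead apply the constructive characterization (Theorem~\ref{Mis.ss4}) directly to the explicitly guessed sets $V'_{0,1}$ (parallel pairs) and $V'_{1,0}$ (opposite pairs) --- in effect redoing for miserable games what the paper's proof of Theorem~\ref{S.T} does for tame games --- and thereby avoid any dependence on Theorem~\ref{S.T}. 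Two remarks. First, the ``circularity'' you worry about does not exist, and this is precisely the point of Theorem~\ref{Mis.ss4}: conditions $(\mathfrak{b}')$ and $(\mathfrak{c}')$ refer to movability into the \emph{abstract} sets $V'$, which you defined explicitly from the summands' swap positions, not into the unknown true sets $V_{i,j}$ of the sum. Consequently no induction on $d(x)$ is needed at all; a three-way case split suffices: if both coordinates of $x$ are swap positions, $(\mathfrak{a}')$ holds; if neither is, every option of $x$ still has a non-swap coordinate, so $(\mathfrak{b}')$ holds; if exactly one, say $x_2$, is swap, then either $x_1$ has no move to a swap position of $G_1$ (so $(\mathfrak{b}')$ holds), or $(\mathfrak{a})$ and $(\mathfrak{b})$ fail for $x_1$ in $G_1$, whence miserability gives $(\mathfrak{c})$ for $x_1$, i.e.\ moves to both a $(0,1)$- and a $(1,0)$-position of $G_1$, and pairing these with $x_2$ yields $(\mathfrak{c}')$. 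Second, you should drop the intermediate claim that Theorem~\ref{S.SG} plus tameness of the summands already ``confirms'' which positions of the sum are swap positions: $\G^-$ of a sum is not computable coordinatewise (Remark~\ref{normal-misere}), so this identification is available only as the conclusion $V'_{i,j}=V_{i,j}$ of Theorem~\ref{Mis.ss4} (as you correctly say at the very end), not as an input. With these repairs your argument is complete: conditions \rm{(i)}--\rm{(iv)} follow from Lemma~\ref{V01V10} (applicable since miserable games are tame, hence domestic, by Theorem~\ref{MisT}), and the ``Moreover'' and ``Furthermore'' clauses fall out of $V'_{i,j}=V_{i,j}$. What each approach buys: the paper's proof is shorter given that Theorem~\ref{S.T} is already established, while yours is self-contained modulo Theorem~\ref{Mis.ss4}, dispenses with induction entirely, and makes the identification of the sum's swap positions automatic rather than borrowed from the tame theory.
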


\begin{proof}
We proceed by induction on $d(x)$ and prove that every position $x$ in $G = G_1 + G_2$ satisfies condition
$(\mathfrak{a})$, or $(\mathfrak{b})$, or $(\mathfrak{c})$ of Definition \ref{D.WTMS}.
Note that $G_1$ and $G_2$ are tame, by Theorem \ref{MisT} and, hence,  $G$ is tame, by Theorem \ref{S.T}.

Let $x = (x_1,x_2)$ be a position of  $G$.
Clearly, $(\mathfrak{a})$ holds when  $d(x) = 0$,
since in this case both  $x_1$ and $x_2$ are terminal positions.

Assume that  $(\mathfrak{a})$, or $(\mathfrak{b})$, or $(\mathfrak{c})$
holds for every position $x$ with $d(x) \leq n$ for some $n \geq 1$.

% By the assumption also implies that if $d(x) \leq n$ then
Then, by induction, $x$ is  either a swap position or a $(k,k)$-position.
We prove that every position $x$ with $d(x) = n+1$ satisfies $(\mathfrak{a})$, or $(\mathfrak{b})$, or $(\mathfrak{c})$.
Assume that  $(\mathfrak{a})$ and $(\mathfrak{b})$  fail for $x$ and show that then $(\mathfrak{c})$ holds.

Indeed,  $x \notin V_{0,1} \cup V_{1,0}$, since $(\mathfrak{a})$ fails for  $x$, and $x \notin V_{0,0} \cup V_{1,1}$ since $(\mathfrak{b})$  fails for $x$. Therefore, $x$ is a $(m,m)$-position for some $m \geq 2$ since $G$ is tame.
Furthermore,  $x$ is movable to a swap position  $x'$, because $(\mathfrak{b})$ fails for  $x$.
% $x'$ that is either $(0,1)$-position or $(1,0)$-position.

Assume that  $x'$ is a $(0,1)$-position.
%% The case  $(1,0)$-position can be argued similarly.

Furthermore, without loss of generality, we can assume that
move $x \rightarrow x'$ in  $G$  is realized by a move
% made in $G_1$, by moving $x_1$ to some $x_1'$.
$x_1 \rightarrow x_1'$  in  $G_1$.
Since $G$ is tame and $x' = (x_1', x_2)$ is a $(0,1)$-position, both
$x_1'$ and $x_2$ are swap positions, by Theorem \ref{S.T}.
Moreover, $\G(x_1') \oplus \G(x_2) = 0$  implies that
$\G(x_1') = \G(x_2)$  and that  $x_1'$ and $x_2$ are parallel.

% We deal with the case where $x_1'$ and $x_2$ are $(0,1)$-positions
% (if they are $(1,0)$-positions, the argument is similar).

In the case when  $x'$,  $x_1'$, and $x_2$  are
$(1,0)$-positions  rather than  $(0,1)$-positions,
similar arguments are applicable.

Since $G_1$ is miserable, $x_1$ satisfies $(\mathfrak{a})$, or  $(\mathfrak{b})$, or $(\mathfrak{c})$.

Since $x_1$ is movable to $x_1'$, which is a $(0,1)$-position, $(\mathfrak{b})$ fails for $x_1$.
We claim that $(\mathfrak{a})$  fails for $x_1$.
Indeed, otherwise  $x_1$ is a swap position. Note that $x_2$ is also a swap position and so $x$ is a swap position by Theorem \ref{S.T}. But this contradicts our assumption that $x$ is a $(m,m)$-position. Therefore $(\mathfrak{a})$  fails  and  $(\mathfrak{c})$ holds for $x_1$.

Then, there is also a move from $x_1$ to a  $(1,0)$-position  $x_1''$.
Note that  $x_1'$ and $x_1''$ are opposite while $x_1'$ and $x_2$ are parallel.
Hence, $x_1''$ and $x_2$ are opposite.
By Theorem \ref{S.T},  $x'' = (x_1'',x_2)$  is a swap position.
Moreover, it is a $(1,0)$-position and a option of  $x$.
Thus, $(\mathfrak{c})$ holds for $x$.

% By the principle of mathematical
Then, by induction, $(\mathfrak{a})$, or $(\mathfrak{b})$, or $(\mathfrak{c})$  holds for every position.
Therefore, $G$ is miserable.
\end{proof}
%%%%%%%%%%%%%%%%%%%%%%%%%%%%%%%%%%%%%%%%%%%%%%%%%%%%%%%%%%%%%%%%%%%%%%%%%%%%%%%%%%%%%%%%%

\medskip

The following generalization results directly from Theorems \ref{S.M} and \ref{S.SG}.

\begin{corollary}
If games  $G_1, \ldots, G_n$  are miserable then their sum  $G =  G_1 + \ldots + G_n$  is miserable too.
Moreover, a position  $x = (x_1,  \ldots, x_n)$  of  $G$ is a swap position of  $G$  if and only if
$x_i$  is a swap position of $G_i$  for  $i \in \{1, \ldots, n\}$.
Furthermore, $x$ is a $(1,0)$-position if and only if
the number of $(1,0)$-positions in set $\{x_1, \ldots, x_n\}$  is odd.
\qed
\end{corollary}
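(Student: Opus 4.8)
The plan is to argue by induction on the number $n$ of summands, using Theorem \ref{S.M} as the engine for the inductive step and Theorem \ref{S.SG} to read off the parity condition at the end. The case $n = 1$ is vacuous and the case $n = 2$ is exactly Theorem \ref{S.M}, so I take these as the base.

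For the inductive step I would exploit the associativity and commutativity of the disjunctive sum (remarked on after Theorem \ref{S.SG}) to write $G = G' + G_n$, where $G' = G_1 + \cdots + G_{n-1}$. By the induction hypothesis $G'$ is miserable, so both summands $G'$ and $G_n$ are miserable; applying Theorem \ref{S.M} to this two-term sum shows that $G = G' + G_n$ is miserable, which is the first assertion. For the swap-position characterization, write $x = (x', x_n)$ with $x' = (x_1, \ldots, x_{n-1})$. Theorem \ref{S.M} says $x$ is a swap position of $G$ if and only if $x'$ is a swap position of $G'$ and $x_n$ is a swap position of $G_n$; by the induction hypothesis $x'$ is a swap position of $G'$ if and only if each $x_i$ with $i \leq n-1$ is a swap position of $G_i$. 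Combining the two equivalences yields the second assertion: $x$ is a swap position of $G$ if and only if every $x_i$ is a swap position of $G_i$.

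For the last assertion I would step outside the induction and use the Sprague--Grundy sum theorem directly. Assume every $x_i$ is a swap position, so that by the previous paragraph $x$ is a swap position of $G$. Since each $G_i$ is tame (Theorem \ref{MisT}), a swap position $x_i$ satisfies $\G(x_i) = 1$ exactly when it is a $(1,0)$-position and $\G(x_i) = 0$ exactly when it is a $(0,1)$-position; in particular $\G(x_i) \in \{0,1\}$. By Theorem \ref{S.SG}, $\G(x) = \G(x_1) \oplus \cdots \oplus \G(x_n)$, and a bitwise $\bmod 2$ sum of values in $\{0,1\}$ equals $1$ precisely when an odd number of the summands equal $1$, that is, when an odd number of the $x_i$ are $(1,0)$-positions. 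Finally, since $G$ is tame and $x$ is a swap position, $\G(x) = 1$ forces $x$ to be a $(1,0)$-position; hence $x$ is a $(1,0)$-position if and only if the number of $(1,0)$-positions among $x_1, \ldots, x_n$ is odd.

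The argument is essentially bookkeeping, and the only point demanding care is the order of deductions in the last step: one must first invoke the swap-position characterization to know that $x$ is a swap position of $G$ before translating the value $\G(x) = 1$ into the statement ``$x$ is a $(1,0)$-position,'' since this translation is valid only on swap positions of a tame game. I expect this coordination between the structural claim (from Theorem \ref{S.M}) and the arithmetic of SG values (from Theorem \ref{S.SG}) to be the sole subtlety; everything else follows mechanically from the two cited theorems.
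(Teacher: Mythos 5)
Your proposal is correct and follows exactly the route the paper intends: the paper gives no written proof, stating only that the corollary ``results directly from Theorems \ref{S.M} and \ref{S.SG},'' and your induction on the number of summands (with Theorem \ref{S.M} as base and inductive engine, plus Theorem \ref{S.SG} for the parity of $(1,0)$-positions) is precisely that routine argument spelled out. Your closing caution---that $\G(x)=1$ may be read as ``$x$ is a $(1,0)$-position'' only after one knows $x$ is a swap position of the tame sum---is the right point of care, and your ordering of deductions handles it properly.
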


%%%%%%%%%%%%%%%%%%%%%%%%%%%%%%%%%%%%%%%%%%%%%%%%%%%%%%%%%%%%%%%%%%%%%%%%%%%%%%%%%%%%%%%%%

%\begin{section}
The subclasses of forced or returnable miserable games are preserved under summation, as well.

\begin{proposition} \label{S.R}
The sum of miserable games is returnable whenever all summands are returnable.
% that are returnable is also returnable.
\end{proposition}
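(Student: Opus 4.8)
The plan is to leverage the structural description of the sum provided by Theorem \ref{S.M}. By that theorem, $G = G_1 + G_2$ is miserable, and a position $x = (x_1,x_2)$ is a swap position of $G$ exactly when both $x_1$ and $x_2$ are swap positions of their respective summands; moreover $x$ is a $(0,1)$-position precisely when $x_1$ and $x_2$ are \emph{parallel} (both $(0,1)$ or both $(1,0)$) and a $(1,0)$-position when they are \emph{opposite}. So I would take a $(0,1)$-position $x = (x_1,x_2)$ of $G$ movable to a non-terminal position $y$ (the case where $x$ is a $(1,0)$-position being symmetric), and, without loss of generality, assume the move is made in the first summand, $x_1 \to y_1$, so that $y = (y_1,x_2)$. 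Then $x_1$ is a non-terminal swap position parallel to $x_2$, and the goal is to exhibit an option of $y$ that is a $(0,1)$-position of $G$.

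The main case uses returnability of the summand $G_1$. If $x_1$ is a $(0,1)$-position, then by Lemma \ref{SG} we have $\G(y_1) \neq \G(x_1) = 0$, so $y_1$ is non-terminal; returnability of $G_1$ then gives a move $y_1 \to z_1$ to a $(0,1)$-position $z_1$, and since $z_1$ and $x_2$ are then parallel, the position $(z_1,x_2)$ is a $(0,1)$-position of $G$ and an option of $y$, as required. If instead $x_1$ (and hence $x_2$) is a $(1,0)$-position and $y_1$ happens to be non-terminal, the same argument applies with ``$(0,1)$'' replaced by ``$(1,0)$'': returnability of $G_1$ yields $y_1 \to z_1$ with $z_1$ a $(1,0)$-position, and $(z_1,x_2)$ is again a parallel pair, hence a $(0,1)$-position of $G$ and an option of $y$.

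The one configuration in which returnability of $G_1$ cannot be invoked is when $x_1$ is a $(1,0)$-position but $y_1$ is \emph{terminal} (hence a $(0,1)$-position), so that the returnability hypothesis, which requires a non-terminal image, does not apply and no further move is available in the first coordinate. This is the main obstacle, and I would resolve it by moving instead in the second coordinate: since $x_2$ is a $(1,0)$-position and $G_2$ is miserable, hence domestic, Lemma \ref{V01V10} supplies a move $x_2 \to w_2$ to a $(0,1)$-position $w_2$; then $(y_1,w_2)$ is a parallel swap pair, i.e. a $(0,1)$-position of $G$, and an option of $y$. This exhausts all cases, so every $(0,1)$-position behaves as required.

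Finally, I would record that the case in which $x$ is a $(1,0)$-position of $G$ is handled by the mirror-image argument, interchanging the roles of $(0,1)$ and $(1,0)$ and of ``parallel'' and ``opposite''; there the analogous obstacle subcase needs $x_2$ to be a non-terminal $(0,1)$-position, which holds because $y$ is non-terminal, and it is resolved by the second half of Lemma \ref{V01V10} (from each non-terminal $(0,1)$-position there is a move to a $(1,0)$-position). The extension from two summands to an arbitrary number then follows by induction on the number of summands, using associativity of the disjunctive sum together with Theorem \ref{S.M}. The crux of the whole proof is isolating and disposing of the ``terminal image'' subcase via Lemma \ref{V01V10}; the remaining cases are direct applications of returnability of the summands.
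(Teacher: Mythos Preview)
Your proof is correct and follows the same approach as the paper: reduce to two summands, use Theorem~\ref{S.M} to identify the swap positions of the sum as exactly the pairs of swap positions in the summands, and then apply returnability of the active summand to produce a move back to a swap position of the required type. You are in fact more careful than the paper's short argument, since you single out and resolve the edge case where the image $y_1$ in the active summand is terminal (so that returnability of $G_1$ is not directly applicable) by switching to a move in the other summand via Lemma~\ref{V01V10}; the paper's proof passes over this subcase without comment.
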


\begin{proof}
It is sufficient to prove that $G_1+G_2$ is returnable whenever
$G_1$ and $G_2$ are miserable and returnable.
Let $x = (x_1,x_2)$ be a swap position in  $G$.
By Theorem \ref{S.M}, both $x_1$ and $x_2$ are swap positions.
Assume that $x$ is movable to some $x'$ in $G_1+G_2$.
Without loss of generality, assume that this move is realized by the move $x_1 \rightarrow x_1'$ in $G_1$.
Since $G_1$ is returnable, there exists a move $x_1' \rightarrow x_1''$ in $G_1$ such that
$x_1$ and $x_1''$ are either both $(0,1)$-positions or both $(1,0)$-positions.
Set $x' = (x_1',x_2)$ and $x'' = (x_1'',x_2)$
% Then we have legal
and consider moves $x \rightarrow x'$ and $x' \rightarrow x''$ in $G_1+G_2$.
By Theorems \ref{S.SG} and \ref{S.M},
$x$ and $x''$ are either both $(0,1)$-positions or both $(1,0)$-positions in $G$.
\end{proof}
%%%%%%%%%%%%%%%%%%%%%%%%%%%%%%%%%%%%%%%%%%%%%%%%%%%%%%%%%%%%%%%%%%%%%%%%%%%%%%%%%%%%%%%%%

% Similarly, we have the following result on sum of games which are both miserable and forced

\begin{proposition} \label{S.F}
The sum of miserable games is forced whenever all summands are forced.
\end{proposition}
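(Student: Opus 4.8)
The plan is to reduce to two summands and then exploit the description of swap positions in a sum furnished by Theorem \ref{S.M}. As in the proof of Proposition \ref{S.R}, it suffices to show that $G_1 + G_2$ is forced whenever $G_1$ and $G_2$ are miserable and forced; the statement for $n$ summands then follows by induction on $n$, writing $G_1 + \dots + G_n = (G_1 + \dots + G_{n-1}) + G_n$, since the partial sum $G_1 + \dots + G_{n-1}$ is miserable by (the corollary to) Theorem \ref{S.M} and forced by the inductive hypothesis, so the two-summand case applies.

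The key structural input is the following consequence of Theorem \ref{S.M}: writing $x = (x_1, x_2)$, the position $x$ is a swap position of $G_1 + G_2$ precisely when both $x_1$ and $x_2$ are swap positions of their summands; moreover $x$ is a $(1,0)$-position exactly when $x_1, x_2$ are \emph{opposite} swaps, and a $(0,1)$-position exactly when $x_1, x_2$ are \emph{parallel} swaps. Thus the type of a swap position of the sum (membership in $V_{0,1}$ versus $V_{1,0}$) is governed entirely by the parallel/opposite relation between the swap types of its two coordinates.

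First I would treat a $(0,1)$-position $x = (x_1, x_2)$. Here $x_1$ and $x_2$ are parallel swaps, say both $(0,1)$-positions (the ``both $(1,0)$'' subcase is identical). Any move in $G_1 + G_2$ is realized in a single summand, say $x_1 \rightarrow x_1'$ in $G_1$. Since $G_1$ is forced and $x_1$ is a $(0,1)$-position, $x_1'$ is a $(1,0)$-position; hence $x_1'$ and $x_2$ are opposite swaps, and so $x' = (x_1', x_2)$ is a $(1,0)$-position of the sum, by Theorem \ref{S.M}. The case of a $(1,0)$-position $x = (x_1, x_2)$ is symmetric: now $x_1, x_2$ are opposite swaps, and a move in either summand flips that coordinate's swap type, by forcedness of that summand, turning the opposite pair into a parallel pair, so the resulting position is a $(0,1)$-position of the sum. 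This verifies both implications in the definition of forced (Definition \ref{D.FR}).

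I do not expect any serious obstacle here: all the genuine content, in particular the computation of misère values in a sum, which does \emph{not} reduce to a bitwise $\oplus$ of misère values (see Remark \ref{normal-misere}), has already been absorbed into Theorem \ref{S.M}. The only point requiring care is the bookkeeping of the parallel/opposite dichotomy, and the single observation driving the argument is that a move inside one summand toggles that summand's swap type between $(0,1)$ and $(1,0)$ (this is precisely forcedness), hence toggles the pair between parallel and opposite, and therefore toggles membership between $V_{0,1}$ and $V_{1,0}$ in the sum.
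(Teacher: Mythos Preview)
Your proposal is correct and follows essentially the same route as the paper: reduce to two summands, invoke Theorem \ref{S.M} to identify swap positions of $G_1+G_2$ with pairs of swap coordinates, and observe that forcedness of a summand flips the swap type of the moved coordinate, hence toggles the parallel/opposite relation and therefore the $(0,1)$/$(1,0)$ type of the sum. If anything, your write-up is slightly more explicit than the paper's in invoking forcedness of the summand to conclude that $x_1'$ is a swap position before applying Theorem \ref{S.M}.
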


\begin{proof}

It is sufficient to prove that $G = G_1+G_2$ is forced
whenever  $G_1$ and $G_2$ are miserable and forced.
Let $x = (x_1,x_2)$ be a swap position in $G$.
By Theorem \ref{S.M}, both $x_1$ and $x_2$ are swap positions.
If $x_1'$ is an  option of $x_1$ in $G_1$ then
$x_1'$ and  $x' = (x_1',x_2)$ are swap positions, by Theorem \ref{S.M}.
Moreover,  Theorems \ref{S.SG} and \ref{S.M} imply that
if $x$ is a $(0,1)$-position
(resp.,~$(1,0)$-position) then $x'$ is a $(1,0)$-position (resp.,~$(0,1)$-position).
These arguments are applicable to any option of $x$ in $G$.
\end{proof}

%%%%%%%%%%%%%%%%%%%%%%%%%%%%%%%%%%%%%%%%%%%%%%%%%%%%%%%%%%%%%%%%%%%%%%%%%%%%%%%%%%%%%%%%%
%%%%%%%%%%%%%%%%%%%%%%%%%%%%%%%%%%%%%%%%%%%%%%%%%%%%%%%%%%%%%%%%%%%%%%%%%%%%%%%%%%%%%%%%%
%%%%%%%%%%%%%%%%%%%%%%%%%%%%%%%%%%%%%%%%%%%%%%%%%%%%%%%%%%%%%%%%%%%%%%%%%%%%%%%%%%%%%%%%%
%%%%%%%%%%%%%%%%%%%%%%%%%%%%%%%%%%%%%%%%%%%%%%%%%%%%%%%%%%%%%%%%%%%%%%%%%%%%%%%%%%%%%%%%%

\section{Applications} \label{S.App}

In this section, we show that many classical games fall
into classes considered above.

%%%%%%%%%%%%%%%%%%%%%%%%%%%%%%%%%%%%%%%%%%%%%%%%%%%%%%%%%%%%%%%%%%%%%%%%%%%%%%%%%%%%%%%%%
%%%%%%%%%%%%%%%%%%%%%%%%%%%%%%%%%%%%%%%%%%%%%%%%%%%%%%%%%%%%%%%%%%%%%%%%%%%%%%%%%%%%%%%%%

\subsection{The game of {\sc Nim}}
\label{nim}
This game is played with $k$ piles of tokens.
By each move a player chooses one pile and removes an arbitrary
(positive) number of tokens from it.
The complete analysis of {\sc Nim} is was given
by Charles Bouton in \cite{Bou901}, who solved both the normal and mis\`ere versions.

Let us start with the trivial case  $k = 1$. The next statement is obvious.

%%%%%%%%%%%%%%%%%%%%%%%%%%%%%%%%%%%%%%%%%%%%%%%%%%%%%%%%%%%%%%%%%%%%%%%%%%%%%%%%%%%%%%%%%
\begin{lemma} \label{Nim-S}
One-pile {\sc Nim} is a strongly miserable game
with exactly one $(0,1)$-position, which is
the terminal position, and exactly one $(1,0)$-position, which is the single pile of size  $1$,
while the pile of size  $n$ is an $(n,n)$-position for all  $n \geq 2$.
\qed
\end{lemma}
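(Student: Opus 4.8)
The plan is to compute both the normal Sprague-Grundy function $\G$ and the misère function $\G^-$ for one-pile {\sc Nim} directly, read off the pairs $(\G(n), \G^-(n))$, and then invoke the characterization of strongly miserable games from Theorem \ref{SM=P}. The state space is simply the pile size $n \geq 0$, with the terminal position being $n = 0$, and a single move from $n$ going to any $m$ with $0 \leq m < n$. So the option set of $n$ is exactly $\{0, 1, \dots, n-1\}$.

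First I would determine the normal SG values. Since the options of $n$ are precisely the positions $0, 1, \dots, n-1$, an easy induction using the recursion (\ref{R.SG}) gives $\G(n) = \operatorname{mex}\{\G(0), \dots, \G(n-1)\} = \operatorname{mex}\{0, 1, \dots, n-1\} = n$. Thus $\G(n) = n$ for all $n \geq 0$; in particular $\G(0) = 0$ and $\G(1) = 1$. Next I would compute $\G^-$ using the same recursion but with the initialization $\G^-(0) = 1$ (the terminal position gets value $1$ under the misère convention). For $n = 1$ the only option is $0$, so $\G^-(1) = \operatorname{mex}\{\G^-(0)\} = \operatorname{mex}\{1\} = 0$. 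For $n = 2$ the options are $0$ and $1$, giving $\G^-(2) = \operatorname{mex}\{1, 0\} = 2$. A short induction then shows $\G^-(n) = n$ for all $n \geq 2$: the options $0, 1, 2, \dots, n-1$ have misère values $1, 0, 2, 3, \dots, n-1$ respectively, whose set is $\{0, 1, 2, \dots, n-1\}$, so the mex is $n$.

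Assembling these computations yields the claimed position types: $n = 0$ is a $(0,1)$-position (the unique terminal), $n = 1$ is a $(1,0)$-position (the single pile of size $1$), and every $n \geq 2$ is an $(n,n)$-position. This gives exactly the description in the statement, with one $(0,1)$-position, one $(1,0)$-position, and no $(0,0)$- or $(1,1)$-positions at all. To conclude that the game is strongly miserable, I would apply Theorem \ref{SM=P}: since the game has no $(0,0)$-position, condition (iii) of that theorem holds, so the game is pet and hence strongly miserable (condition (i)). I expect no real obstacle here; the only point requiring the slightest care is the base case $n = 1$ of the misère computation, where the swap $\G^-(1) = 0$ (as opposed to the normal $\G(1) = 1$) propagates correctly into the mex for $n = 2$ without disturbing the pattern $\G^-(n) = n$ for larger $n$. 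This is precisely the phenomenon, first observed by Bouton, that the normal and misère plays of {\sc Nim} differ only on the smallest positions.
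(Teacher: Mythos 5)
Your proposal is correct: the paper states this lemma without proof (declaring it obvious), and your direct computation of $\G(n)=n$ and of $\G^-(0)=1$, $\G^-(1)=0$, $\G^-(n)=n$ for $n\geq 2$ is exactly the natural verification, with the appeal to condition (iii) of Theorem \ref{SM=P} (no $(0,0)$-position) being a legitimate way to conclude strong miserability. One could equally well finish by checking Definition \ref{D.WTMS}(i) directly --- positions $0$ and $1$ are swap positions, so $(\mathfrak{a})$ holds, and every $n\geq 2$ is movable both to $0\in V_{0,1}$ and to $1\in V_{1,0}$, so $(\mathfrak{c})$ holds --- but your route through Theorem \ref{SM=P} is equally valid and requires no more work.
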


Already the two-pile {\sc Nim} is not strongly miserable.
For example, {\sc Nim}$(2,2)$ is a $(0,0)$-position.
% In general, the following statement holds.

%%%%%%%%%%%%%%%%%%%%%%%%%%%%%%%%%%%%%%%%%%%%%%%%%%%%%%%%%%%%%%%%%%%%%%%%%%%%%%%%%%%%%%%%%

\begin{proposition} \label{Nim-M}
The game of {\sc Nim} is miserable and forced.
\end{proposition}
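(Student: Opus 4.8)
The statement to prove is Proposition~\ref{Nim-M}: the game of {\sc Nim} (with any number of piles) is miserable and forced. The plan is to exploit the structural results already established, namely that one-pile {\sc Nim} is strongly miserable (Lemma~\ref{Nim-S}), that strongly miserable games are miserable (Theorem~\ref{SM=P} or Theorem~\ref{MisT}), that the sum of miserable games is miserable (Theorem~\ref{S.M}), and that the sum of miserable and forced games is forced (Proposition~\ref{S.F}). Since $k$-pile {\sc Nim} is by definition the disjunctive sum of $k$ copies of one-pile {\sc Nim}, the heavy lifting will be done by these summation theorems; what remains is to verify the base case at the level of a single pile and then invoke the machinery.

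First I would handle a single pile. By Lemma~\ref{Nim-S}, one-pile {\sc Nim} is strongly miserable, and by the chain of containments in Figure~\ref{Inc} (strongly miserable $\Rightarrow$ miserable), it is miserable. Next I would check that one-pile {\sc Nim} is \emph{forced} in the sense of Definition~\ref{D.FR}(i): its only swap positions are the terminal position (the empty pile, a $(0,1)$-position) and the pile of size $1$ (a $(1,0)$-position). The unique move from the pile of size $1$ goes to the empty pile, so every move from the $(1,0)$-position lands on a $(0,1)$-position; and the $(0,1)$-position is terminal, so the ``vice versa'' clause is vacuous. Hence one-pile {\sc Nim} is forced.

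With the base case in hand, I would write $k$-pile {\sc Nim} as $N_1 + \cdots + N_k$, where each $N_i$ is one-pile {\sc Nim}. Each summand is miserable and forced. Applying the Corollary to Theorem~\ref{S.M} (sum of miserable games is miserable) gives that the whole game is miserable. Applying Proposition~\ref{S.F} inductively on the number of summands (each partial sum being miserable by the same corollary, so that the hypotheses of Proposition~\ref{S.F} are met at every stage) gives that the sum is forced. This completes the proof.

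The only genuinely delicate point is the application of Proposition~\ref{S.F}, since that result is stated for the sum of \emph{two} miserable and forced games, whereas {\sc Nim} is a $k$-fold sum. The induction step requires knowing that each intermediate sum $N_1 + \cdots + N_j$ is itself miserable and forced, so that adding $N_{j+1}$ keeps it forced; the ``miserable'' half is guaranteed by the miserability corollary, and the ``forced'' half is the inductive hypothesis, so the two results interlock cleanly. I expect no real obstacle here, only the need to state the induction carefully; the substance of the proposition is entirely absorbed by the earlier summation theorems.
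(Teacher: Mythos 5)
Your proof is correct, and the miserability half is exactly the paper's argument: Lemma~\ref{Nim-S} plus the summation theorem (Theorem~\ref{S.M} and its corollary). Where you diverge is the forcedness half. The paper proves it directly: by Theorem~\ref{S.M}, a swap position $x=(x_1,\dots,x_k)$ of the sum must have every component a swap position of one-pile {\sc Nim}, i.e.\ $x_i\in\{0,1\}$ for all $i$; any move from such a position necessarily empties a pile of size $1$, so it lands on another $0$--$1$ vector and flips the parity of the number of ones, which by the characterization of $(1,0)$-positions in Theorem~\ref{S.M} (odd number of $(1,0)$-components) means every move swaps $(0,1)\leftrightarrow(1,0)$. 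You instead verify the base case --- one-pile {\sc Nim} is forced, with the ``vice versa'' clause vacuous at the terminal --- and then invoke Proposition~\ref{S.F} (sums of miserable and forced games are forced), with the careful induction on the number of summands that you correctly identify as necessary, since the paper's proof of Proposition~\ref{S.F} is written for two summands. Both routes are sound. Your approach is more modular and would apply verbatim to any sum of miserable and forced games; the paper's direct computation buys something concrete in exchange, namely the explicit description of the swap positions of {\sc Nim} as the $0$--$1$ vectors with alternating parity, which the paper uses immediately afterwards to state the characterization of $V_{0,1}$ and $V_{1,0}$ in the following proposition.
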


\begin{proof}
By Lemma \ref{Nim-S} and Theorem \ref{S.M}, % the game of
{\sc Nim} is miserable.
Let us show that it is forced.
Let  $x = (x_1, \ldots, x_k)$  be a swap position.
By Theorem \ref{S.M}, each {\sc Nim}$(x_i)$  is a swap position,
implying either $x_i = 0$ or $x_i = 1$ for every $i$.
Obviously, every move from a swap positions
ends in another swap position and changes
the parity of the number of ones.
%A possible move from $x$ to a position $x'$ is therefore
%to reduce some single pile of size 1 to 0, moving from a swap position to another swap position.
%Thus $x'$ is the sum of swap position and so $x'$ is also a swap position, by Theorem \ref{S.M}.
\end{proof}

The above arguments also prove that
the $(0,1)$-positions and $(1,0)$-positions alternate.
This immediately results in the following
characterization of the sets  $V_{0,1}$ and $V_{1,0}$.
%%%%%%%%%%%%%%%%%%%%%%%%%%%%%%%%%%%%%%%%%%%%%%%%%%%%%%%%%%%%%%%%%%%%%%%%%%%%%%%%%%%%%%%%%
\begin{proposition}
$V_{0,1} = \{\underset{2k \text{ entries $1$}}{(\underbrace{1,\ldots,1})} \mid
k \geq 0\}$ and $V_{1,0}= \{\underset{2k+1 \text{ entries $1$}}{(\underbrace{1,\ldots,1})}  \mid k \geq 0\}$.
\qed
\end{proposition}

%%%%%%%%%%%%%%%%%%%%%%%%%%%%%%%%%%%%%%%%%%%%%%%%%%%%%%%%%%%%%%%%%%%%%%%%%%%%%%%%%%%%%%%%%
%%%%%%%%%%%%%%%%%%%%%%%%%%%%%%%%%%%%%%%%%%%%%%%%%%%%%%%%%%%%%%%%%%%%%%%%%%%%%%%%%%%%%%%%%

\subsection{Subtraction games}\label{subtraction}
Subtraction game, denoted by $\S(X)$, is played with
a finite pile of tokens and a set  $X$  of positive integers,
which may be finite or infinite.
A move is to choose an element of $X$ and remove this number of tokens from the pile.
Various aspects of this game are exposed in \cite{ANW07, AB95, BCG01-04, CH10, Fer74}.

In \cite{Fer74}, Ferguson shows that in any subtraction game
each non-terminal $0$-position is movable to a $1$-position.
This and Theorem \ref{SM=P} imply the following statement.

\begin{proposition} \label{sub-sm}
Subtraction games are strongly miserable.
\qed
\end{proposition}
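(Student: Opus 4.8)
The final statement to prove is Proposition \ref{sub-sm}: subtraction games are strongly miserable.

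The plan is to invoke the equivalences established in Theorem \ref{SM=P}, which tell us that a game is strongly miserable if and only if it is pet, if and only if property \rm{(v)} holds---namely, every non-terminal $0$-position is movable to some position with SG value $1$. Since Ferguson's result (cited here from \cite{Fer74}) states precisely that in any subtraction game $\S(X)$ each non-terminal $0$-position is movable to a $1$-position, property \rm{(v)} of Theorem \ref{SM=P} is satisfied. By the equivalence $\rm{(v)} \Leftrightarrow \rm{(i)}$ in that theorem, the game is strongly miserable. This is essentially the whole argument, so the proof is a one-line deduction and the statement carries a \qed with no separate proof environment.

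If instead I wanted to give a self-contained argument for Ferguson's property rather than quoting it, I would reason directly about the pile size. First I would recall that in $\S(X)$ a position is just a non-negative integer $m$ (the number of tokens), with a move subtracting some $s \in X$ with $s \le m$. The plan would be to show by induction on $m$ that whenever $\G(m) = 0$ and $m$ is non-terminal, there is a move to a position of SG value $1$. The key observation is that a $0$-position $m$ has some legal move (since it is non-terminal), hence some $s \in X$ with $s \le m$; one then analyzes the set of reachable SG values and uses the $\operatorname{mex}$ characterization of Lemma \ref{SG} to locate a reachable $1$-position. I would exploit the fact that subtraction is translation-like: the option set from $m$ is $\{m - s \mid s \in X, \ s \le m\}$, and these option sets are nested as $m$ grows, which is what forces the appearance of a reachable $1$-position.

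The main obstacle, in the self-contained route, is precisely reconstructing Ferguson's lemma---controlling how the SG values of a subtraction game behave so that a $0$-position is guaranteed a $1$-option requires tracking the reachability structure carefully and is not fully routine, especially when $X$ is infinite. However, since the statement is explicitly set up to quote \cite{Fer74} together with Theorem \ref{SM=P}, the intended proof avoids this difficulty entirely: the hard work has been delegated to the cited result and to the equivalence already proved, leaving only the immediate combination of the two. I therefore expect the author's proof to consist of exactly that combination, with the \qed attached directly to the proposition statement.
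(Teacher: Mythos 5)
Your proposal matches the paper's own argument exactly: the paper derives Proposition \ref{sub-sm} by combining Ferguson's result from \cite{Fer74} (that every non-terminal $0$-position of a subtraction game is movable to a $1$-position, i.e., property \rm{(v)}) with the equivalence \rm{(v)} $\Leftrightarrow$ \rm{(i)} of Theorem \ref{SM=P}, which is why the proposition carries only a \qed. The paper does additionally reproduce Ferguson's lemma and its proof (Lemma \ref{L.F} and Proposition \ref{p4}) for the reader's convenience, but that is presented as supplementary material, not as the proof of Proposition \ref{sub-sm} itself.
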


Since the proof by Ferguson \cite{Fer74} is
very short and elegant, we copy it here for the reader's convenience.

\begin{proposition} [\cite{Fer74}] \label{p4}
Every subtraction game satisfies  %%% the Ferguson
property \rm{(v)} of Theorem \ref{SM=P}.
\end{proposition}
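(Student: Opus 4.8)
The plan is to reduce property (v) to a clean \emph{pairing lemma} relating the $0$-valued and $1$-valued positions, and then read off (v) in one step. Throughout, I identify a position of $\S(X)$ with the number $n$ of remaining tokens, write $s=\min(X)$, and recall that the non-terminal positions are exactly those $n\ge s$, a move being $n\to n-s'$ for any $s'\in X$ with $s'\le n$.

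The central claim I would isolate is the following. \emph{For every $m\ge s$ one has $\G(m)=1$ if and only if $\G(m-s)=0$.} I would prove this biconditional by strong induction on $m$, using Lemma \ref{SG} (the mex characterization) throughout. For the ``if'' direction, assuming $\G(m-s)=0$, the option $m\to m-s$ shows $\G(m)\ge 1$; to force $\G(m)=1$ I would rule out a value-$1$ option $m-s'$ by invoking the ``only if'' direction of the inductive hypothesis at $m-s'<m$, which yields a value-$0$ option of $m-s$ and so contradicts $\G(m-s)=0$. For the ``only if'' direction, assuming $\G(m)=1$ so that no option of $m$ has value $1$ while some option has value $0$, I would suppose $\G(m-s)\ge 2$; then $m-s$ has a value-$0$ option $m-s-t'$, and applying the ``if'' direction of the inductive hypothesis at $m-t'<m$ produces an option $m-t'$ of $m$ with $\G(m-t')=1$, contradicting $\G(m)=1$.

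With the pairing lemma in hand, (v) is immediate. Given a non-terminal $0$-position $n$ (so $n\ge s$), the option $n-s$ satisfies $\G(n-s)\ge 1$, hence by Lemma \ref{SG} it has a value-$0$ option: there is $s'\in X$ with $s'\le n-s$ and $\G(n-s-s')=0$. Since $s'\le n-s$, the position $m:=n-s'$ is itself a legal option of $n$ with $m\ge s$, and $\G(m-s)=\G(n-s-s')=0$; the pairing lemma then gives $\G(n-s')=1$, the required value-$1$ option.

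The main obstacle is engineering the induction so that both directions of the pairing lemma close simultaneously: the ``if'' direction at $m$ consumes the ``only if'' direction at smaller arguments and vice versa, so I would package them as a single biconditional statement $P(m)$ proved by strong induction, checking that each step appeals only to indices strictly below $m$ (the base case $m=s$, where the sole option of $s$ is the terminal position $0$, holds directly). The one point needing care is that every position to which the inductive hypothesis is applied is non-terminal, i.e.\ lies in $[s,m)$: this holds because a value-$1$ position is never terminal and the arithmetic $s'\le m-s$ keeps the relevant arguments in range.
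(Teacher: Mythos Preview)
Your proposal is correct and follows essentially the same route as the paper. Your ``pairing lemma'' is exactly the paper's Lemma~\ref{L.F} (Ferguson's lemma: $\G(x)=0\iff\G(x+s)=1$ where $s=\min(X)$), and your derivation of (v) from it is identical to the paper's. The only cosmetic difference is organizational: you prove both directions of the biconditional simultaneously by strong induction on $m$, whereas the paper first proves $\G(x)=0\Rightarrow\G(x+s)=1$ by minimal counterexample (using a slightly longer two-step descent through $x-s$ and then $x-s-s'$) and afterwards deduces the converse from it; your packaging is arguably cleaner since each direction at level $m$ consumes the other at smaller levels directly.
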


Proposition \ref{p4} is based on the following lemma.

\begin{lemma} [\cite{Fer74}] \label{L.F}
Set  $k = \min(X)$. Then $\G(x) = 0$  if and only if $\G(x+k) = 1$.
\end{lemma}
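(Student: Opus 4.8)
The plan is to prove the biconditional by strong induction on the pile size $x$, establishing the statement for $x$ while assuming it for every smaller pile $w < x$. Throughout I use $k = \min(X)$, so that the terminal positions are exactly the piles of size less than $k$, and I rely on the reformulation of Lemma \ref{SG}: a position $z$ satisfies $\G(z) = 0$ if and only if every option of $z$ has nonzero SG value, while $\G(z) = 1$ if and only if some option of $z$ has SG value $0$ and no option of $z$ has SG value $1$. The only genuine base is $x = 0$, which needs no hypothesis: the only option of the pile $k$ is the empty pile, so $\G(k) = \operatorname{mex}\{0\} = 1$, matching $\G(0) = 0$.

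For the forward direction, assume $\G(x) = 0$. Since subtracting $k$ tokens moves $x+k$ to $x$, the value $0$ occurs among the options of $x+k$, giving $\G(x+k) \geq 1$. To obtain $\G(x+k) = 1$ I would rule out an option of value $1$. Any option has the form $y = x + k - s$ with $s \in X$ and $s \leq x+k$. If $s = k$ then $y = x$, whose value is $0$. If $s > k$ (the only other possibility, since $k = \min(X)$) then $y = x - (s-k) < x$; should $y$ be terminal ($y < k$) its value is $0$, and otherwise, assuming for contradiction $\G(y) = 1$, the induction hypothesis applied to $y - k = x - s < x$ yields $\G(x - s) = 0$. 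But $x - s$ is then an option of $x$ of value $0$, contradicting $\G(x) = 0$. Hence no option of $x+k$ has value $1$, and $\G(x+k) = 1$.

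For the reverse direction, assume $\G(x+k) = 1$, so that no option of $x+k$ has value $1$. To conclude $\G(x) = 0$ it suffices to show every option $x - s'$ (with $s' \in X$, $s' \leq x$) has nonzero value. If instead $\G(x - s') = 0$ for some such $s'$, then, since $x - s' < x$, the induction hypothesis gives $\G(x - s' + k) = 1$. As $x + k - s'$ is an option of $x+k$, this contradicts that $x+k$ has no option of value $1$. Therefore every option of $x$ has nonzero value and $\G(x) = 0$, completing the induction.

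The main obstacle is organizing the case analysis in the forward direction: one must separate the move $s = k$, which reaches the fixed $0$-position $x$, from the moves $s > k$ permitted by $k = \min(X)$, and among the latter distinguish options that are already terminal from those to which the induction hypothesis may legitimately be applied. Once it is observed that every appeal to the hypothesis is to a strictly smaller pile ($x - s$ with $s \geq k \geq 1$ in one direction, $x - s'$ in the other), and that terminal options are handled directly, the argument closes cleanly; no periodicity or global structure of the Sprague--Grundy sequence is needed.
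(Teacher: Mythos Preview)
Your proof is correct. Both your argument and the paper's are inductions on the pile size, but you organize them differently: you run a single strong induction proving the full biconditional at each step, so in the forward direction you may invoke the \emph{reverse} implication for smaller piles (from $\G(y)=1$ deduce $\G(y-k)=0$), which lets you reach a contradiction after analyzing one move $s$. Ferguson's original proof, reproduced in the paper, first establishes the forward direction alone by minimal counterexample---hence may only appeal to the forward direction for smaller piles---and therefore needs two moves $s,s'$ to descend to a smaller $0$-position before closing the contradiction; the reverse direction is then derived from the already-proved forward one. Your packaging is a little shorter; the paper's has the minor advantage that the forward implication is proved self-containedly.
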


\begin{proof}
Since $k \in X$,  $\G(x) = 0$  implies $\G(x+k) \neq 0$ for all $x$.

For the necessary condition, assume for contradiction that there exists the smallest $x$ such that  $\G(x) = 0$  and $\G(x+k) > 1$. By the definition of SG values, there exists $s \in X$  such that $\G(x+k-s)=1$. Since $k = \min(X)$,  $k - s \leq 0$. Moreover, $x+k-s \geq k$ or $x - s \geq 0$ (otherwise, there is no move from $x+k-s$ while $\G(x+k-s) = 1$). Furthermore,  $\G(x) = 0$ implies  $\G(x - s) > 0$. Thus there exists $s' \in X$  such that  $\G(x - s - s') = 0$ by the definition of SG values.

Let $y = x - s - s'$. Then $y < x$ and $\G(y) = 0$, implying that $\G(y+k) = 1$, by the choice of the smallest $x$. However, the last equation implies that  $\G(y+k+s') \neq 1$ or, equivalently, $\G(x-s+k) \neq 1$, contradicting $\G(x+k-s)=1$ as above.

Conversely, if $\G(x) = 1$ and $\G(x - k) \neq 0$, there exists $s \in X$ such that  $\G(x-k-s) = 0$. By the necessary condition, $\G(x-s) = 1$, which contradicts $\G(x) = 1$.
\end{proof}

%%%%%%%%%%%%%%%%%%%%%%%%%%%%%%%%%%%%%%%%%%%%%%%%%%%%%%%%%%%%%%%%%%%%%%%%%%%%%%%%%%%%%%%%%

\begin{proof} [Proof of Proposition \ref{p4}]
Given any non-terminal  $x$  such that $\G(x) = 0$, one has $\G(x - k) \neq 0$, where $k$  is the smallest element of  $X$. This implies that there is an  $s \in X$ such that $\G(x - k - s) = 0$. From Lemma \ref{L.F},  $\G(x - s) = 1$.
\end{proof}

%%%%%%%%%%%%%%%%%%%%%%%%%%%%%%%%%%%%%%%%%%%%%%%%%%%%%%%%%%%%%%%%%%%%%%%%%%%%%%%%%%%%%%%%%
%%%%%%%%%%%%%%%%%%%%%%%%%%%%%%%%%%%%%%%%%%%%%%%%%%%%%%%%%%%%%%%%%%%%%%%%%%%%%%%%%%%%%%%%%

\subsection{Game {\sc Mark}}\label{mark}

A game played with a single pile is called a single-pile {\sc Nim}-like game if
two players take turns removing tokens from that pile. %  without splitting.
After Subsections \ref{nim} and \ref{subtraction},
one may ask whether each single-pile {\sc Nim}-like game is strongly miserable.
The is not the case.
Moreover, such a game may be not even domestic.
For example, let us consider the following
single-pile {\sc Nim}-like game suggested by  Fraenkel \cite{Fra11}
and called {\sc Mark}.
By one move a pile of size $n$  should be reduced to
either $n-1$  or $\lfloor \frac{n}{2} \rfloor$.
% Here $\lfloor . \rfloor$ is the integer part.

\begin{proposition} \label{Mark-nD}
Game {\sc Mark} is not domestic.
\end{proposition}

\proof
It is not difficult to verify that  8 is a (0,2)-position.
\qed

%%%%%%%%%%%%%%%%%%%%%%%%%%%%%%%%%%%%%%%%%%%%%%%%%%%%%%%%%%%%%%%%%%%%%%%%%%%%%%%%%%%%%%%%%
%%%%%%%%%%%%%%%%%%%%%%%%%%%%%%%%%%%%%%%%%%%%%%%%%%%%%%%%%%%%%%%%%%%%%%%%%%%%%%%%%%%%%%%%%

\subsection{Game {\sc Euclid}} \label{euclid}

In 1969  Cole and Davie \cite{CD69} introduced game {\sc Euclid}.
It is played with two piles of tokens.
By one move a player has to remove from the greater pile
any number of tokens that is an integer multiple of the size of the smaller pile.
The game ends when one of the piles is empty.
A position of two piles of sizes $x$ and $y$ is denoted by $(x,y)$.
It was shown in  \cite{CD69}  that
$(x,y)$ is a $\P$-position if and only if
$x < y < \phi x$, where $\phi = (1+\sqrt{5})/2$ is the golden ratio \cite{CD69}.

In 1997, Grossman \cite{Gro97} proposed a modification
of this game in which the entries must stay positive.
In particular, move  $(x,y) \rightarrow (x,0)$  is not allowed
even if $y$ is a multiple  of  $x$.
Thus, the terminal positions of this game are  $(x,x)$
for some positive $x$.

Note that Grossman's variant is not the mis\`{e}re version of  {\sc Euclid} by Cole and Davie.
Also note that in the literature the examples \cite{Gur07, Hof08, Len03, Niv06} referred to as {\sc Euclid} are Grossman's
version, not Cole and Davie's version.

The SG function of Grossman's variant was solved in \cite{Niv06} and that
of the original game {\sc Euclid} was solved later in \cite{CHL11}, where it was shown that
these two SG functions are very similar.
Some other variants were also studied in \cite{CH-euclid, Col08, Ho11}.

We now analyze miserability of these two games.
Miserability of Grossman's variant was analyzed in \cite{Gur07}.

%%%%%%%%%%%%%%%%%%%%%%%%%%%%%%%%%%%%%%%%%%%%%%%%%%%%%%%%%%%%%%%%%%%%%%%%%%%%%%%%%%%%%%%%%

\begin{proposition} \label{Euclid-M}
Both Cole and Davie's game and Grossman's game of {\sc Euclid} are miserable and forced.
\end{proposition}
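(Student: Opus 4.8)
The plan is to prove both statements by exhibiting the swap sets $V'_{0,1},V'_{1,0}$ explicitly and invoking the characterization of miserable games in Theorem \ref{Mis.ss4}; once $V_{0,1}$ and $V_{1,0}$ are pinned down, the \emph{forced} property of Definition \ref{D.FR} reduces to inspecting the moves available at swap positions. Throughout I use that a position $(x,y)$ with $0<x<y$ has exactly the options $(x,y-kx)$ for those $k\ge 1$ with $y-kx\ge 0$ (Cole--Davie) or $y-kx>0$ (Grossman), so that its options are governed by the Euclidean quotient $q=\lfloor y/x\rfloor$ and remainder $r=y\bmod x$.

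For Cole and Davie's game I would take $V'_{0,1}$ to be the set of terminal positions $(x,0)$ and $(0,y)$, and $V'_{1,0}=\{(x,x)\mid x\ge 1\}$. Conditions (i)--(iv) of Theorem \ref{SMis.ss4} are immediate: the terminals have no outgoing moves, so $V'_{0,1}\setminus V_T=\emptyset$ and (iii) is vacuous; each $(x,x)$ has the single option $(x,0)\in V'_{0,1}$, giving (iv) and the independence of $V'_{1,0}$; and the two sets are disjoint. The trichotomy is then a divisibility check: for $0<x<y$ an option $(x,y-kx)$ meets $V'_{0,1}\cup V'_{1,0}$ only when $y-kx\in\{0,x\}$, that is, only when $x\mid y$. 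If $x\mid y$ with $y\ge 2x$, then removing $y$ reaches $(x,0)\in V'_{0,1}$ and removing $y-x$ reaches $(x,x)\in V'_{1,0}$, so $(\mathfrak{c}')$ holds; if $x\nmid y$ no swap position is reachable and $(\mathfrak{b}')$ holds; the diagonal and the terminals give $(\mathfrak{a}')$. By Theorem \ref{Mis.ss4} the game is miserable and $V_{0,1},V_{1,0}$ are exactly the sets above. Forcedness follows at once: the $(0,1)$-positions are terminal and have no moves, while each $(1,0)$-position $(x,x)$ has the unique option $(x,0)\in V_{0,1}$.

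For Grossman's game the terminal positions are the diagonal $(x,x)$, so these become the base $(0,1)$-positions, and the swap sets no longer admit a one-line description. Small computations show, for instance, that $(2,3)$ and $(4,6)$ are $(0,1)$-positions while $(3,4)$ is a $(0,0)$-position and $(3,7)$ is a $(1,1)$-position, so the $(0,1)$/$(0,0)$ and $(1,0)$/$(1,1)$ splittings inside the golden band $x<y<\phi x$ are genuinely arithmetic. Here I would feed the explicit Sprague--Grundy formula of \cite{Niv06} (and its Cole--Davie analogue in \cite{CHL11}) into Theorem \ref{Mis.ss4}: one reads off $V_{0,1}$ and $V_{1,0}$ from that formula, verifies the independence and movability conditions (i)--(iv) using that a golden-band position satisfies $y<2x$ and hence has the single option $(y-x,x)$, and checks the trichotomy $(\mathfrak{a}'),(\mathfrak{b}'),(\mathfrak{c}')$ band by band.

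The main obstacle is precisely this last bookkeeping. Unlike the Cole--Davie case, one cannot avoid the continued-fraction description of the swap positions, and the delicate point is to decide, within the golden band, which $\G=0$ positions are $(0,1)$ versus $(0,0)$ (and likewise for $\G=1$); this is exactly what the formula of \cite{Niv06} supplies. Once $V_{0,1}$ and $V_{1,0}$ are determined, forcedness follows because every off-diagonal $(0,1)$-position lies in the golden band, hence has the unique option $(y-x,x)$, which the formula identifies as a $(1,0)$-position, and symmetrically every option of a $(1,0)$-position is a $(0,1)$-position. Since the miserability of Grossman's variant was already announced in \cite{Gur07}, the forcedness verification is the only genuinely new ingredient for this game.
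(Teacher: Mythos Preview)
For Cole and Davie's game your argument is exactly the paper's: the same sets $V'_{0,1}$ (the terminals) and $V'_{1,0}=\{(x,x)\mid x\ge 1\}$ are plugged into Theorem~\ref{Mis.ss4}, the divisibility trichotomy is verified identically, and forcedness is read off from the trivial shape of the swap positions.

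For Grossman's game the approaches diverge. The paper simply declares $V'_{0,1}=\{(x,x)\mid x\ge 1\}$ and $V'_{1,0}=\{(x,2x),(2x,x)\mid x\ge 1\}$ and asserts that ``the same arguments work.'' You instead argue that the swap sets are arithmetically complicated and propose to extract them from Nivasch's SG formula. Your caution is in fact justified: your own example $(2,3)$ is a $(0,1)$-position whose unique option $(1,2)$ lies in the paper's $V'_{1,0}$ but not in its $V'_{0,1}$, so $(2,3)$ satisfies none of $(\mathfrak{a}')$, $(\mathfrak{b}')$, $(\mathfrak{c}')$, and the paper's choice of sets does \emph{not} meet the hypotheses of Theorem~\ref{Mis.ss4} (equivalently, the ``moreover'' conclusion $V'_{0,1}=V_{0,1}$ would be violated since $(2,3)\in V_{0,1}\setminus V'_{0,1}$). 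So the paper's short argument for Grossman's variant has a genuine gap that your computations expose. On the other hand, your own proposal is only a plan: you defer the identification of $V_{0,1},V_{1,0}$ to \cite{Niv06} and call the remaining verification ``bookkeeping'' without doing it. To finish, either cite \cite{Gur07} directly for miserability (as you note, it is established there) and then carry out the forcedness check from the explicit swap sets given in that paper, or actually perform the Nivasch-based verification of Theorem~\ref{Mis.ss4} that you outline.
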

\begin{proof}
We first prove that Cole and Davie's game miserable.
Set $V'_{0,1} = \{(0,x), (x,0) \mid x \in \Z_{> 0}\}$ and
$V'_{1,0} = \{(x,x) \mid x \in \Z_{> 0}\}$ in which $\Z_{> 0}$
is the set of positive integers.
Note that if $v \in V'_{0,1}$, then $v$ is a terminal and, hence,  a $(0,1)$-position.
If $v \in V'_{1,0}$ then $v$ is movable to a terminal position and, moveover,
this is the only move available from $v$; hence, $v$ is a $(1,0)$-position.

It is easily  seen that
if $v \notin V'_{0,1} \cup V'_{1,0}$ then either $v$ is not movable to $V'_{0,1} \cup V'_{1,0}$ or
$v$ is movable to $V'_{0,1}$ and to $V'_{1,0}$.
Then, by Theorem \ref{Mis.ss4}, the game %real game {\sc Euclid}
is miserable and, moreover,
$V_{0,1} = V'_{0,1}$ and $V_{1,0} = V'_{1,0}$.
It follows also that this game is forced.

For Grossman's game, we set
$V'_{0,1} = \{(x,x) \mid x \in \Z_{> 0}\}$ and
$V'_{1,0} = \{(x,2x), (2x,x) \mid x \in \Z_{> 0}\}$
and the same arguments work.
\end{proof}

%%%%%%%%%%%%%%%%%%%%%%%%%%%%%%%%%%%%%%%%%%%%%%%%%%%%%%%%%%%%%%%%%%%%%%%%%%%%%%%%%%%%%%%%%
%%%%%%%%%%%%%%%%%%%%%%%%%%%%%%%%%%%%%%%%%%%%%%%%%%%%%%%%%%%%%%%%%%%%%%%%%%%%%%%%%%%%%%%%%

\subsection{Game {\sc Wythoff}}

The {\sc Wythoff} game \cite{Wyt907} is a modification of the two-pile {\sc Nim}
in which a player by one move is allowed to remove  either

 \medskip

(i) an arbitrary number of tokens from one pile, or

\smallskip

(ii) the same number of tokens from both.

\medskip

Two piles of sizes $x$ and $y$ define a position  $(x,y)$.
By symmetry, $(x,y)$  and  $(y,x)$  are equivalent;
we will assume that $x \leq y$
unless the converse is explicitly said. % in our notation $(x,y)$.

Let $(x_n, y_n)_{n \geq 0}$, where
$x_i < x_j$ if $i < j$, be the sequence of $\P$-positions of the game. %{\sc Wythoff}.
Wythoff \cite{Wyt907} proved that $(x_n, y_n)$ is a $\P$-position if and only if
$x_n = \lfloor \phi n \rfloor$ and $y_n = \lfloor \phi^2 n \rfloor$, where
$\phi = (1+\sqrt{5})/2$ is the golden ratio.
Note that $\lfloor \phi^2 n \rfloor =\lfloor \phi n \rfloor + n$.

The game {\sc Wythoff} and numerous modifications of it were studied intensively in the literature:
\cite{BF90, BGO11, DFNR10, Fra82, Fra98, Fra84, Gur12, Ho12, Urb11}.
However, no explicit formula is known for the SG function of this game.
%Even the set of positions of the SG value $1$  is unknown, although some properties of this set are proven in \cite{BF90}.
% XXX  The thesis of Gabriel Nivasch is about this. XXX
In \cite{Fra84}, Fraenkel analyzed the mis\`{e}re version of {\sc Wythoff} and
characterized its $\P$-positions.
Interestingly, the $\P$-positions of the normal and mis\`ere versions of {\sc Wythoff}
differ only by six positions:
$\{(0,0), (1,2), (2,1)\} \in V_P^N \setminus V_P^M$, while
$\{(0,1), (1,0), (2,2)\} \in V_P^M \setminus V_P^N$.
Here $V_P^N$ (resp.,~$V_P^M$) is the set $\P$-positions in the normal (resp.,~mis\`{e}re) version.
One can check this claim by comparing \cite[Proposition 2]{DFNR10} and  \cite[Theorem 2.1]{Fra84}.
Using these results, one can verify directly that the game {\sc Wythoff} is miserable.
Here, we provide an alternative proof using Theorem \ref{Mis.ss4}.

%the notice that the two positions $(0,1)$ and
%$(1,2)$ are $\P$-positions in normal {\sc Wythoff}
%while the two positions $(0,1)$ and $(2,2)$ are $\P$-positions
%in mis\`{e}re {\sc Wythoff} and the two versions share all other $\P$-positions.

\begin{proposition} \label{Wyt-M}
Game {\sc Wythoff} is miserable.
\end{proposition}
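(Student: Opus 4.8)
===PLAN===

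The plan is to apply the constructive characterization of miserable games, Theorem~\ref{Mis.ss4}, exactly as was done for game \textsc{Euclid} in Proposition~\ref{Euclid-M}. That is, I will exhibit two explicit disjoint independent sets $V'_{0,1}$ and $V'_{1,0}$ satisfying conditions (i)--(iv) of Theorem~\ref{SMis.ss4} together with the three-way disjunction $(\mathfrak{a}')$, $(\mathfrak{b}')$, $(\mathfrak{c}')$, and then conclude miserability directly. The natural guess for these sets comes from Fraenkel's characterization of the mis\`ere $\P$-positions quoted just above: the swap positions ought to be precisely the six exceptional positions where normal and mis\`ere play disagree, glued onto the common $\P$-positions in the appropriate way. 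Concretely, since $(0,1)$-positions are those with $\G=0,\G^-=1$ and $(1,0)$-positions are those with $\G=1,\G^-=0$, I expect $V'_{0,1}$ to consist of the terminal position $(0,0)$ together with the remaining normal $\P$-positions that stay $\P$ in the mis\`ere version, and $V'_{1,0}$ to consist of the small exceptional positions $(1,0),(0,1),(2,2)$ that become $\P$ only under the mis\`ere convention (up to the symmetry $(x,y)\leftrightarrow(y,x)$).

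First I would fix the candidate sets precisely, reconciling the two quoted references $\cite[\text{Proposition 2}]{DFNR10}$ and $\cite[\text{Theorem 2.1}]{Fra84}$ so that $V'_{0,1}$ and $V'_{1,0}$ partition the set $V_P^N\cup V_P^M$ of all positions that are $\P$ in either convention. Next I would verify the structural conditions (i)--(iv): independence of each set (no \textsc{Wythoff} move between two $\P$-positions, which is just the defining property of $\P$-positions), the containment $V_T\subseteq V'_{0,1}$ (the single terminal $(0,0)$ lands in $V'_{0,1}$), and the two mutual-reachability conditions (iii)--(iv), which amount to checking that the few exceptional swap positions can move to the appropriate partner. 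Then the main content is condition on every position $x$: I would show that any position outside $V'_{0,1}\cup V'_{1,0}$ either cannot reach a swap position at all ($(\mathfrak{b}')$), or can reach both a $V'_{0,1}$ position and a $V'_{1,0}$ position ($(\mathfrak{c}')$). For most positions this reduces to the standard fact that a non-$\P$ \textsc{Wythoff} position can move to a $\P$-position, combined with the golden-ratio geometry of the Beatty sequences; the few positions near the exceptional set must be handled by hand.

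The main obstacle I anticipate is the verification of $(\mathfrak{a}')/(\mathfrak{b}')/(\mathfrak{c}')$ for positions lying \emph{close} to the six exceptional positions, where the normal and mis\`ere $\P$-sets diverge. Away from this finite cluster the two conventions share $\P$-positions, so $(\mathfrak{b}')$ and $(\mathfrak{c}')$ follow from the usual reachability arguments; but a position such as $(1,1)$, $(2,0)$, $(0,2)$, or a small position adjacent to $(1,2)$ can reach some of the exceptional swap positions and must be checked individually to confirm it satisfies the disjunction rather than falsifying it. The verification that no $(0,0)$-position arises (equivalently, that the characterization really yields a miserable, not merely domestic, game) is automatic once the three-way disjunction holds, by the ``moreover'' clause of Theorem~\ref{Mis.ss4}. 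I would finish by invoking Theorem~\ref{Mis.ss4} to conclude $V'_{0,1}=V_{0,1}$, $V'_{1,0}=V_{1,0}$, and hence that \textsc{Wythoff} is miserable, leaving the finite exceptional case-check as the only genuinely computational step.
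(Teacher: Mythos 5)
Your high-level plan --- exhibit explicit sets $V'_{0,1}$, $V'_{1,0}$ and invoke Theorem~\ref{Mis.ss4}, exactly as in Proposition~\ref{Euclid-M} --- is the paper's approach, and your opening guess that the swap positions are precisely the six exceptional positions is correct. But your concrete definition of the sets contradicts that guess, and the error is fatal to the verification. You propose that $V'_{0,1}$ and $V'_{1,0}$ should together partition $V_P^N \cup V_P^M$, with $V'_{0,1}$ containing the normal $\P$-positions that \emph{stay} $\P$ in the mis\`ere version. Those common $\P$-positions satisfy $\G = \G^- = 0$: they are $(0,0)$-positions, not $(0,1)$-positions, contradicting the definition you quote one line earlier. (The paper emphasizes this right after the proposition: $(3,5)$ is a $(0,0)$-position, which is exactly why {\sc Wythoff} is miserable but not strongly miserable.) Concretely, with your sets condition (iii) of Theorem~\ref{SMis.ss4} fails: $(3,5) \in V'_{0,1}$ is non-terminal, yet none of its options $(2,5),(1,5),(0,5),(3,4),(3,3),(3,2),(3,1),(3,0),(2,4),(1,3),(0,2)$ lies in $V'_{1,0} = \{(0,1),(1,0),(2,2)\}$. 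In addition, your $V'_{0,1}$ omits $(1,2)$ and $(2,1)$, which are genuine $(0,1)$-positions (normal $\P$, mis\`ere non-$\P$); then $(1,2)$ violates the required disjunction: it is movable to $(0,1) \in V'_{1,0}$ but to no element of your $V'_{0,1}$, so none of $(\mathfrak{a}')$, $(\mathfrak{b}')$, $(\mathfrak{c}')$ holds for it.

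The fix is to take set differences rather than the union: $V'_{0,1} = V_P^N \setminus V_P^M = \{(0,0),(1,2),(2,1)\}$ and $V'_{1,0} = V_P^M \setminus V_P^N = \{(0,1),(1,0),(2,2)\}$, two finite sets consisting of the six exceptional positions only. The infinitely many common $\P$-positions are then left outside both sets and satisfy $(\mathfrak{b}')$ --- no {\sc Wythoff} move takes them into the six exceptional positions --- while every other position is checked to satisfy $(\mathfrak{b}')$ or $(\mathfrak{c}')$. This is precisely the paper's proof; the ``moreover'' clause of Theorem~\ref{Mis.ss4} then also yields $V_{0,1} = V'_{0,1}$ and $V_{1,0} = V'_{1,0}$, confirming that the swap positions of {\sc Wythoff} are exactly those six, and in particular that no swap set can contain $(3,5)$.
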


\begin{proof}
 Let us set   $V'_{0,1} = \{(0,0),(1,2), (2,1)\}$ and  $V'_{1,0} = \{(0,1),(1,0),(2,2)\}$.
  One can easily verify the containments
  $V'_{0,1} \subseteq V_{0,1}$ and $V'_{1,0} \subseteq V_{1,0}$.
  Let $(x,y)$ be a position that does not belong to  $V'_{0,1} \cup V'_{1,0}$.
  It is easily seen that
  either $(x,y)$ is not movable to $V'_{0,1} \cup V'_{1,0}$ or
  $(x,y)$ is movable to both $V'_{0,1}$ and $V'_{1,0}$.
  Thus, by Theorem \ref{Mis.ss4}, the game {\sc Wythoff} is miserable and, moreover,
  $V_{0,1} = V'_{0,1}$ and $V_{1,0} = V'_{1,0}$.
\end{proof}

Note that $(3,5)$ is a $(0,0)$-position and, thus, {\sc Wythoff} is not strongly miserable.

%%%%%%%%%%%%%%%%%%%%%%%%%%%%%%%%%%%%%%%%%%%%%%%%%%%%%%%%%%%%%%%%%%%%%%%%%%%%%%%%%%%%%%%%%

\begin{proposition}
The game {\sc Wythoff} is returnable but not forced.
\end{proposition}
\begin{proof}
 There is a move from (2, 2), which is a $(1, 0)$-position, to (1, 1), which is a (2, 2)-position; hence, the game % {\sc Wythoff}
 is not forced. It is easily seen that the game is returnable.
\end{proof}

%%%%%%%%%%%%%%%%%%%%%%%%%%%%%%%%%%%%%%%%%%%%%%%%%%%%%%%%%%%%%%%%%%%%%%%%%%%%%%%%%%%%%%%%%
%%%%%%%%%%%%%%%%%%%%%%%%%%%%%%%%%%%%%%%%%%%%%%%%%%%%%%%%%%%%%%%%%%%%%%%%%%%%%%%%%%%%%%%%%

\subsection{{Game \sc Wyt}$(a)$}  \label{SS.WYT(a)}

In \cite{Fra82} Fraenkel, for any positive integer $a$, introduced the following
generalization  {\sc Wyt}$(a)$  of the game {\sc Wythoff}.
This game is also played with two piles of tokens and by one move a player is allowed

\medskip

(i) to remove an arbitrary number of tokens from one pile, or

\smallskip

(ii) to remove $k$ tokens from one pile and $l$ tokens from the other pile such that $|k-l|<a$.

\medskip

The game {\sc Wyt}$(a)$ was studied by Fraenkel  \cite{Fra82, Fra84}.
Note that {\sc Wyt}$(1)$ is {\sc Wythoff} and,  hence, it is miserable.
% For $a \geq 2$ the following statement holds.
% Let us show that {\sc Wyt}$(a)$ is strongly miserable when $a \geq 2$.

\begin{proposition} \label{Fr-Nim-SM}
 Game {\sc Wyt}$(a)$ is strongly miserable whenever $a \geq 2$.
\end{proposition}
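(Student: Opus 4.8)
The plan is to apply the characterization of strongly miserable (pet) games given in Theorem~\ref{SMis.ss4}. I would exhibit two explicit disjoint candidate sets $V'_{0,1}$ and $V'_{1,0}$ and verify conditions \rm{(i)}--\rm{(iv)} together with \rm{SM(v)}; Theorem~\ref{SMis.ss4} then yields that the game is strongly miserable and that the candidate sets are the genuine $V_{0,1}$ and $V_{1,0}$. The natural guess, in analogy with one-pile {\sc Nim} (Lemma~\ref{Nim-S}) and with {\sc Wythoff}, is that for $a \geq 2$ the swap positions of {\sc Wyt}$(a)$ are exactly the ``small'' positions: I would set $V'_{0,1} = \{(0,0)\}$ (the unique terminal position) and let $V'_{1,0}$ consist of the positions reachable from it by a single move, namely those $(x,y)$ with $\{x,y\} \subseteq \{0,1,\ldots\}$ of total size at most what a single move can remove from $(0,0)$'s neighbours --- concretely the positions of the form $(0,1),(1,0),(1,1),\ldots$ up through $(a,0),(0,a)$ and the near-diagonal positions $(k,l)$ with $k+l$ small and $|k-l|<a$. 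The precise description of $V'_{1,0}$ is the content I would have to pin down, and it should be exactly the set of nonterminal $(1,0)$-positions.

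The key structural reason strong miserability should hold when $a \geq 2$ (and fails for $a = 1$, i.e.\ plain {\sc Wythoff}, where $(3,5)$ is a $(0,0)$-position) is that the extra diagonal-type move of rule (ii) with slack $|k-l| < a \geq 2$ destroys the isolated $0$-positions that create $(0,0)$-positions. So the heart of the argument is to show property \rm{(iii)} of Theorem~\ref{SM=P}: the game has \emph{no} $(0,0)$-position, equivalently every nonterminal $0$-position is movable to a $1$-position (property \rm{(v)}). Thus even before fixing $V'_{1,0}$ I would first try to establish directly that {\sc Wyt}$(a)$ with $a\geq 2$ has no $(0,0)$-position, since by Theorem~\ref{SM=P} that single fact already gives strong miserability. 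This reduces the problem to understanding the normal-play $\P$-positions of {\sc Wyt}$(a)$, whose structure was determined by Fraenkel in \cite{Fra82,Fra84}.

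The plan for the no-$(0,0)$-position step is to use the known Beatty-sequence description of the $\P$-positions of {\sc Wyt}$(a)$: the $\P$-positions are $(\lfloor \alpha n\rfloor, \lfloor \alpha n\rfloor + an)$ for an appropriate quadratic irrational $\alpha$ depending on $a$. Given a nonterminal $\P$-position $(x,y)$, I would produce an explicit move to a $1$-position. For the terminal $(0,0)$ this is handled separately; for a nonterminal $\P$-position I would exhibit a move (a small subtraction from a pile, or a small diagonal move allowed because $a\geq 2$) landing on one of the designated ``$(1,0)$'' positions, and check via the SG characterization (Lemma~\ref{SG} and its mis\`ere version) that the target has $\G=1$. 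Having shown there is no $(0,0)$-position, Theorem~\ref{SM=P}(iii)$\Rightarrow$(ii)$\Rightarrow$(i) finishes the proof.

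The main obstacle I anticipate is the combinatorial bookkeeping of the near-terminal positions: I must identify precisely which small positions are $(1,0)$-positions and verify the ``reachability'' conditions \rm{(iii)}--\rm{(iv)} and the mex computation at the boundary, where the extra moves of rule (ii) with $|k-l|<a$ interact with the pile-emptying moves of rule (i). In particular, I expect the case $a=2$ versus larger $a$ to require slightly different counts of the low positions, and I would need to confirm that the diagonal move never lets a $0$-position reach only $0$- and $\geq 2$-positions (which is exactly what fails for {\sc Wythoff}). Once the small cases are tabulated correctly, the inductive verification that every other position satisfies $(\mathfrak{c}')$ should be routine.
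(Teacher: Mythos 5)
You correctly reduce the problem to showing that {\sc Wyt}$(a)$ with $a \geq 2$ has no $(0,0)$-position and then invoking Theorem \ref{SM=P}; that is exactly the paper's skeleton. But your plan for carrying out that step has a genuine gap. You propose to verify property (v): for each nonterminal normal $\P$-position, exhibit a move to a position with $\G = 1$, checked ``via Lemma \ref{SG} and its mis\`ere version.'' This is not feasible as stated: no closed description of the $\G = 1$ positions of {\sc Wyt}$(a)$ is known (the paper notes that even for {\sc Wythoff} no formula for the SG function is known), and Lemma \ref{SG} cannot certify $\G(x') = 1$ without knowing the SG values of all options of $x'$. Worse, the identification of the $\G = 1$ positions with a tractable set (namely the mis\`ere $\P$-positions) is itself a consequence of the game being pet, so relying on it would be circular.

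The missing idea is that one never needs SG values other than $0$. A $(0,0)$-position is by definition a position that is simultaneously a $\P$-position of the normal play and of the mis\`ere play, and Fraenkel determined \emph{both} sets --- you cite \cite{Fra82, Fra84} but use only the normal-play half. By Proposition \ref{Fr-Nim-P}, the normal $\P$-positions satisfy $y_m = x_m + am$; by Proposition \ref{Fr-Nim-P-mis} (this is what \cite{Fra84} supplies), the mis\`ere ones satisfy $y'_n = x'_n + an + 1$. If some position lay in both sets, equating the two coordinates would give $a(m-n) = 1$, impossible for $a \geq 2$. Hence the two sets are disjoint (Corollary \ref{Fr-Nim-P-dif}), there is no $(0,0)$-position, and Theorem \ref{SM=P}(iii) finishes the proof --- with no bookkeeping of small positions, no candidate sets $V'_{0,1}$, $V'_{1,0}$, and no appeal to Theorem \ref{SMis.ss4}. (Incidentally, your opening guess $V'_{0,1} = \{(0,0)\}$ plus finitely many small positions could not be right in any case: in a pet game $V_{0,1}$ is the full, infinite set of normal $\P$-positions, i.e., Fraenkel's Beatty-type sequence.)
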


We first recall results on $\P$-positions of the normal and mis\`{e}re versions.

%%%%%%%%%%%%%%%%%%%%%%%%%%%%%%%%%%%%%%%%%%%%%%%%%%%%%%%%%%%%%%%%%%%%%%%%%%%%%%%%%%%%%%%%%

\begin{proposition} [\cite{Fra82}] \label{Fr-Nim-P}   % [Theorem 1]
For $a \geq 2$, the sequence $(x_n,y_n)_{n \geq 0}$ of $\P$-positions of {\sc Wyt}$(a)$ satisfies the following conditions:
\begin{itemize}  \itemsep0em
\item [\rm{(i)}] $(x_0,y_0) = (0,0)$;
\item [\rm{(ii)}] for $n \geq 1$, $x_n = \operatorname{mex} \{x_i, y_i  \mid 0 \leq i < n\}$ and $y_n = x_n + an$.
\end{itemize}
\end{proposition}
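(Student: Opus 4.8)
The plan is to establish Proposition~\ref{Fr-Nim-P} by showing that the sequences defined by the stated recursion describe \emph{exactly} the $\P$-positions of {\sc Wyt}$(a)$, using the standard characterization (cf.\ Lemma~\ref{SG}) that the set of $\P$-positions is the unique set $S$ of positions such that no move joins two positions of $S$ and every position outside $S$ has a move into $S$. First I would fix the sequences by $(x_0,y_0)=(0,0)$, $x_n=\operatorname{mex}\{x_i,y_i\mid 0\le i<n\}$, $y_n=x_n+an$, and set $P=\{(x_n,y_n),(y_n,x_n)\mid n\ge 0\}$, including both orientations since the two piles play symmetric roles. It then suffices to verify the two conditions above for $P$.

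The first step is a package of structural lemmas proved together by induction on $n$: the sequence $(x_n)_{n\ge1}$ is strictly increasing, $(y_n)_{n\ge1}$ is strictly increasing, and the sets $A=\{x_n\mid n\ge1\}$ and $B=\{y_n\mid n\ge1\}$ are complementary, that is, disjoint with $A\cup B=\Z_{>0}$. The mex rule forces $x_n$ to be the least positive integer not yet used while $y_n=x_n+an$ places $y_n$ above all earlier entries, which is the familiar complementary (Beatty-type) structure of Wythoff sequences; the induction is routine. I would also record that $y_n-x_n=an$, so the differences of the ``lower'' positions $(x_n,y_n)$ run through the positive multiples of $a$, each attained exactly once.

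Next comes stability: no move sends one position of $P$ to another. By symmetry it suffices to treat $z=(x_n,y_n)$ with $n\ge1$. A type-(ii) move to $(x_n-k,\,y_n-l)$ changes the difference to $an+(k-l)$ with $|k-l|\le a-1$; since every position of $P$ has difference a multiple of $a$ and $an+(k-l)\equiv k-l\pmod a$, this forces $k=l$, after which the difference is still $an$ and the only lower-type $\P$-position with that difference is $z$ itself, impossible as $k=l>0$ (the symmetric positions have negative difference, so are excluded on orientation grounds). A type-(i) move fixes one coordinate; using that $x$- and $y$-values are strictly increasing and that $A,B$ are disjoint, the retained coordinate cannot recur as a coordinate of any other $\P$-position, so no such move lands in $P$. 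Note $a\ge2$ enters only through the bound $|k-l|\le a-1$.

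Finally, reachability: every $(p,q)\notin P$ with $p\le q$ has a move into $P$. If $p=0$ or $p=q$, the move emptying the second pile, respectively the diagonal move removing $p$ from both piles (legal since $0<a$), reaches $(0,0)\in P$. If $p\in B$, say $p=y_j$, truncating the second pile from $q$ down to $x_j$ reaches the symmetric position $(y_j,x_j)\in P$. The remaining case is $p=x_n\in A$ with $p<q$: if $q\ge y_n$, truncate the second pile to $y_n$ reaching $(x_n,y_n)$; if $p<q<y_n$, let $am$ be the largest multiple of $a$ not exceeding $q-p$, and remove $k=p-x_m$ from the first pile and $l=q-y_m$ from the second, so that $m<n$, both $k,l\ge0$, and $|k-l|=|am-(q-p)|\le a-1$, giving a legal type-(ii) move to $(x_m,y_m)\in P$. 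I expect this last ``interior'' subcase to be the main obstacle, since it is precisely where the complementarity bookkeeping ($x_m\le p$, $y_m\le q$, and the difference bound simultaneously) must be pinned down; stability and the structural lemmas are comparatively mechanical. Once both conditions hold, the uniqueness of the $\P$-position set identifies it with $P$, which is exactly the asserted formulas.
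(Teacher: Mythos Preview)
Your proof is correct. The paper itself does not give a proof of this proposition at all: it is quoted as a result of Fraenkel \cite{Fra82} and used as a black box (together with the mis\`ere analogue, Proposition~\ref{Fr-Nim-P-mis}) to deduce Corollary~\ref{Fr-Nim-P-dif}. So there is nothing in the paper to compare against; you have supplied the argument the paper omits.

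Your line is the standard one and matches Fraenkel's original: verify that the recursively defined set $P$ is independent (no move inside $P$) and absorbing (every position outside $P$ moves into $P$), then invoke uniqueness of the $\P$-set. The structural lemmas, stability, and the outer reachability cases are routine, and your ``interior'' case is also fine once unpacked: from $am\le q-p<an$ you get $m<n$, hence $x_m<x_n=p$ so $k=p-x_m\ge 1$; and $y_m=x_m+am<p+(q-p)=q$ so $l=q-y_m\ge 1$; finally $|k-l|=(q-p)-am<a$. So the move is legal of type~(ii) regardless of whether one insists $k,l\ge 1$. Two small remarks you might tighten in a write-up: in the $p\in B$ case note explicitly that $q\ge p=y_j>x_j$, so truncating the second pile to $x_j$ removes a strictly positive amount; and in the type-(i) stability check, the case $m'=0$ (where $x_{m'}=y_{m'}=0$) should be dismissed separately from the $A\cap B=\emptyset$ argument, which you implicitly do but could state.
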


%%%%%%%%%%%%%%%%%%%%%%%%%%%%%%%%%%%%%%%%%%%%%%%%%%%%%%%%%%%%%%%%%%%%%%%%%%%%%%%%%%%%%%%%%

\begin{proposition} [\cite{Fra84}] \label{Fr-Nim-P-mis}    % [Theorem 2.1]
For $a \geq 2$, the sequence $(x'_n,y'_n)_{n \geq 0}$ of $\P$-positions of mis\`{e}re {\sc Wyt}$(a)$ satisfies the following conditions:
\begin{itemize}  \itemsep0em
\item [\rm{(i)}] $(x'_0,y'_0) = (0,1)$;
\item [\rm{(ii)}] for $n \geq 1$, $x'_n = \operatorname{mex} \{x'_i, y'_i  \mid 0 \leq i < n\}$ and $y'_n = x'_n + an + 1$.
\end{itemize}
\end{proposition}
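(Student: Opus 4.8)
The plan is to prove that the set $A=\{(x'_n,y'_n),(y'_n,x'_n)\mid n\ge 0\}$, with the sequence $(x'_n,y'_n)$ defined by the recursion in the statement, is precisely the set of $\P$-positions of mis\`ere {\sc Wyt}$(a)$; claims (i) and (ii) then merely describe this set. By symmetry of the game in its two coordinates it suffices to reason about positions $(p,q)$ with $p\le q$, and I would invoke the standard characterization of the mis\`ere $\P$-positions by downward induction on the total $p+q$ (each move strictly decreases it, so the game is finite and acyclic from any start): $A$ is the $\P$-set provided (stability) no position of $A$ has a move to a position of $A$, (absorption) every position outside $A$ other than the terminal $(0,0)$ has a move into $A$, and the mis\`ere terminal convention is respected. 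The base cases are immediate: $(0,0)$ is terminal, hence an $\N$-position, and is not in $A$, while the only move from $(0,1)=(x'_0,y'_0)$ is to $(0,0)$ (a type (i) removal of the single token, or a type (ii) move removing one token from the size-$1$ pile, with difference $1<a$), so $(0,1)$ is a $\P$-position.

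First I would establish the two structural facts about the sequences that drive everything. From $x'_n=\operatorname{mex}\{x'_i,y'_i\mid i<n\}$ one gets that $(x'_n)$ is strictly increasing; since $y'_n=x'_n+an+1$ with $a\ge 2$, the sequence $(y'_n)$ is strictly increasing as well and $y'_n>x'_i$ for every $i\le n$. Together with the mex rule this yields that $\{x'_n\mid n\ge0\}$ and $\{y'_n\mid n\ge0\}$ are disjoint, and the usual complementary-sequence argument (were some integer absent from both, the least such integer would have been selected as a mex) shows that they partition $\Z_{\geq 0}$. The second engine is that the gaps $y'_n-x'_n=an+1$ are strictly increasing in $n$, hence distinct, and their pairwise differences are multiples of $a$.

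With these facts, stability is short. A type (i) move alters a single coordinate; for it to send $(x'_m,y'_m)$ into $A$ it would have to equate an $x$-value with a $y$-value (impossible by disjointness) or leave the index fixed (no genuine move). A type (ii) move decreases the coordinates by $k$ and $l$ with $|k-l|<a$; if it carried $(x'_m,y'_m)$ to the equally oriented $(x'_n,y'_n)$ with $n<m$, then $|k-l|=|(y'_m-x'_m)-(y'_n-x'_n)|=a(m-n)\ge a$, contradicting $|k-l|<a$, while the oppositely oriented target is again excluded by disjointness of the two sequences.

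The absorption step is the crux and the main obstacle. Given $(p,q)\notin A$ with $p\le q$ and $(p,q)\ne(0,0)$, the partition locates $p$ as a unique $x'_r$ or $y'_s$. If $p=y'_s$, reducing the other pile from $q$ down to $x'_s$ is a legal type (i) move onto $(y'_s,x'_s)\in A$; if $p=x'_r$ and $q>y'_r$, reducing $q$ to $y'_r$ lands on $(x'_r,y'_r)$. The one delicate case is $p=x'_r$ with $x'_r\le q<y'_r$, where neither pile can simply be dropped onto a $\P$-position and one must supply a \emph{diagonal} type (ii) move to some $(x'_t,y'_t)$ with $t<r$. Here the gap $g=q-x'_r$ lies in $[0,ar]$, and I would take $t$ to be the least index with $|at+1-g|<a$; the required inequalities $k=x'_r-x'_t\ge 1$, $l=q-y'_t\ge 0$, and $|k-l|<a$ then follow from the strict monotonicity of $(x'_n)$, the bound $g\le ar$, and the way the $y$-values are interspersed among the $x$-values by the partition. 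Verifying that this choice simultaneously produces a non-negative residual pile and an admissible difference $|k-l|<a$ is the hard point, since these two requirements pull $t$ in opposite directions; it is precisely the constraint $|k-l|<a$ together with the complementary structure of the sequences that guarantees a single legal target. Once stability and absorption are confirmed, the induction identifies $A$ with the mis\`ere $\P$-positions and establishes (i) and (ii).
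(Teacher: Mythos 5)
First, a point of reference: the paper itself gives no proof of this proposition --- it is quoted from Fraenkel \cite{Fra84} and used as a black box (via Corollary \ref{Fr-Nim-P-dif}) to prove Proposition \ref{Fr-Nim-SM}. So your attempt must be judged on its own completeness. Your architecture is the right and standard one (complementary sequences, stability, absorption, backward induction under the mis\`ere terminal convention, with the base cases handled correctly), but it has two genuine gaps. The first is in the stability step: you dismiss a type (ii) move from $(x'_m,y'_m)$ to the oppositely oriented pair $(y'_n,x'_n)$ ``by disjointness of the two sequences.'' Disjointness says nothing about this case: such a move removes $k=x'_m-y'_n$ and $l=y'_m-x'_n$, and neither quantity is forced to be illegitimate by the sequences being disjoint. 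What actually kills the move is the computation $l-k=(y'_m-x'_m)+(y'_n-x'_n)=a(m+n)+2$: legality would require $a(m+n)+2<a$, hence $m=n=0$ and $a>2$, but then $k=x'_0-y'_0=-1$, which is not a valid removal. This must be argued, not waved at.

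The second gap is the one you yourself concede: the diagonal move in the absorption step --- the case $p=x'_r$, $x'_r\le q<y'_r$ --- is the crux of the entire proof, and you assert the needed inequalities while admitting that verifying them ``is the hard point.'' A proof that leaves its hard point unverified is not a proof. As it happens, your choice of $t$ does work, and the missing verification is short: write $g=q-x'_r\in[0,ar]$. For $g\ge 1$ your least admissible index is $t=\lfloor (g-1)/a\rfloor$, so that $at\le g-1$ and $g-1<a(t+1)$, i.e.\ $0\le g-(at+1)<a$; from $g\le ar$ one gets $t\le r-1$, hence $k=x'_r-x'_t\ge 1$ by strict monotonicity, and then $l=q-y'_t=k+(g-at-1)\ge k\ge 1$ with $|k-l|=g-at-1<a$, a legal type (ii) move onto $(x'_t,y'_t)$. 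For $g=0$ take $t=0$: since $(p,q)\neq(0,0)$ and $1=y'_0$ is a $y$-value, $x'_r\ge 2$, so $k=x'_r\ge 2$, $l=k-1\ge 1$, $|k-l|=1<a$. With these two repairs your argument becomes a complete and self-contained proof of the proposition (something the paper does not supply); without them, it is a correct plan with its decisive steps missing.
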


%%%%%%%%%%%%%%%%%%%%%%%%%%%%%%%%%%%%%%%%%%%%%%%%%%%%%%%%%%%%%%%%%%%%%%%%%%%%%%%%%%%%%%%%%

\begin{corollary} \label{Fr-Nim-P-dif}
For $a \geq 2$, two sets of $\P$-positions of {\sc Wyt}$(a)$ and its mis\`{e}re version are disjoint.
\end{corollary}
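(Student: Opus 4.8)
The plan is to show that the two $\P$-position sequences, described explicitly in Propositions \ref{Fr-Nim-P} and \ref{Fr-Nim-P-mis}, have no common pair. I would phrase the argument entirely in terms of the normalized form $x \le y$, recalling that a pair $(x,y)$ is a normal $\P$-position exactly when $(x,y) = (x_n, y_n)$ for some $n \ge 0$, and a mis\`ere $\P$-position exactly when $(x,y) = (x'_n, y'_n)$ for some $n \ge 0$. Suppose for contradiction that some pair $(x,y)$ appears in both lists, say $(x,y) = (x_m, y_m) = (x'_n, y'_n)$. Then in particular $x_m = x'_n$ and $y_m = y'_n$, which forces the two \emph{differences} to coincide: $y_m - x_m = y'_n - x'_n$.

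The key computation is just reading off these differences from the two recurrences. By Proposition \ref{Fr-Nim-P}(ii) the normal gap is $y_m - x_m = am$, while by Proposition \ref{Fr-Nim-P-mis}(ii) the mis\`ere gap is $y'_n - x'_n = an + 1$. Hence a common pair would require
\begin{equation*}
am = an + 1,
\end{equation*}
that is, $a(m-n) = 1$. Since $a \ge 2$ is a positive integer and $m - n$ is an integer, the product $a(m-n)$ cannot equal $1$: it is either $0$ (when $m = n$) or has absolute value at least $a \ge 2$. This contradiction shows no pair can lie in both sequences, so the two sets of $\P$-positions are disjoint. I would also note the boundary pairs separately for completeness: the unique $\P$-position with $x = y$, namely $(x_0,y_0) = (0,0)$, has gap $0$, whereas every mis\`ere $\P$-position has gap $an + 1 \ge 1 > 0$, so $(0,0)$ is certainly not mis\`ere; this is subsumed by the general gap argument once one checks the $n=0$ cases fit the formulas.

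The only genuine subtlety — and the step I would be most careful about — is that the difference comparison presupposes both sequences are written with the same orientation (both in the form $x_n \le y_n$), so that equality of unordered pairs really does force equality of the individual coordinates and hence of the gaps. Since Propositions \ref{Fr-Nim-P} and \ref{Fr-Nim-P-mis} both present their sequences in exactly this normalized increasing form (with $y_n = x_n + an$ and $y'_n = x'_n + an + 1$, both nonnegative gaps), this orientation issue causes no trouble, and the parity/divisibility contradiction $a(m-n)=1$ closes the argument immediately. No deeper structural facts about the $\operatorname{mex}$-defined $x_n, x'_n$ sequences are needed — only their gaps matter.
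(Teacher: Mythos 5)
Your proposal is correct and follows essentially the same argument as the paper: assume a common pair, equate the gaps $am$ and $an+1$ from the two recursions, and derive the impossible equation $a(m-n)=1$ for $a \geq 2$. Your extra remarks on orientation and the $n=0$ boundary cases are careful additions but do not change the substance of the argument.
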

\begin{proof}
Let $(x_m,y_m)$ be a $\P$-position of {\sc Wyt}$(a)$ and
let $(x_n',y_n')$ be a $\P$-position of mis\`{e}re {\sc Wyt}$(a)$.
If these two positions are coincident then $x_m = x_n'$ and $x_m + am = x'_n + an + 1$.
One can then simplify to obtain the equation $a(m - n) = 1$, giving a contradiction as
$1$ cannot be multiple of $a$.
\end{proof}

%%%%%%%%%%%%%%%%%%%%%%%%%%%%%%%%%%%%%%%%%%%%%%%%%%%%%%%%%%%%%%%%%%%%%%%%%%%%%%%%%%%%%%%%%
\begin{proof}[Proof of Proposition \ref{Fr-Nim-SM}]
It follows immediately from Corollary  \ref{Fr-Nim-P-dif} and Theorem \ref{SM=P} \rm{(iii)}.
\end{proof}

%%%%%%%%%%%%%%%%%%%%%%%%%%%%%%%%%%%%%%%%%%%%%%%%%%%%%%%%%%%%%%%%%%%%%%%%%%%%%%%%%%%%%%%%%
%%%%%%%%%%%%%%%%%%%%%%%%%%%%%%%%%%%%%%%%%%%%%%%%%%%%%%%%%%%%%%%%%%%%%%%%%%%%%%%%%%%%%%%%%

\subsection{Game {\sc Wyt}$(a,b)$}  \label{SS.WYT(a,b)}

Game {\sc Wyt}$(a,b)$ was introduced in \cite{Gur12},
for any two non-negative integers $a$ and $b$,  as follows.
Like {\sc Wythoff}, it is played with two piles of tokens.
By one move a player is allowed to delete $x \geq 0$ tokens from one pile and
$y \geq 0$ tokens from the other such that $x+y >0$ and %either
($\operatorname{min}(x,y) < b$ or $|x-y| < a$).
Note that
{\sc Wyt}$(0,1)$ is the two-pile {\sc Nim},
{\sc Wyt}$(1,1)$ is {\sc Wythoff}, and
{\sc Wyt}$(a,1)$ is {\sc Wyt}$(a)$.

\smallskip

The following recursive solution
of the normal and mis\`{e}re versions of the game
was given in \cite{Gur12}

Given an integer $b \geq 1$ and a finite set $S$ of $m$
non-negative integers $s_1, \ldots, s_m$ such that $s_1  < \dots < s_m$,
let us set $s_0 = -b$ and $s_{m+1} = +\infty$.
Then, there exists the smallest index $i \in \{0, 1, \ldots , m\}$ such that $s_{i+1} - s_i > b$.
%We now define a function called
Let us define a function  $\operatorname{mex_b}$ of $S$ as follows:
\[\operatorname{mex_b}(S) = s_i + b \]
It is easily seen that $\operatorname{mex_b}(\emptyset) = 0$ and that
$\operatorname{mex_b(S)}$ equals $\operatorname{mex}(S)$  when $b=1$, that is,
$\operatorname{mex_1} = \operatorname{mex}$.
%%%%%%%%%%%%%%%%%%%%%%%%%%%%%%%%%%%%%%%%%%%%%%%%%%%%%%%%%%%%%%%%%%%%%%%%%%%%%%%%%%%%%%%%%

\smallskip

The $\P$-positions of the normal and its mis\`{e}re versions of game {\sc Wyt}$(a,b)$
are characterized in \cite{Gur12} as follows.

\begin{proposition} [\cite{Gur12}] \label{Gur-Wyt-P}
The sequence $(x_n,y_n)_{n \geq 0}$ of the $\P$-positions of the normal version of game {\sc Wyt}$(a,b)$
satisfies the following recursion:
\[ x_n = \operatorname{mex_b} \{x_i, y_i \mid 0 \leq i < n\}, \quad y_n = x_n + a n. \]
\qed
\end{proposition}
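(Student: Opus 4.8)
The plan is to prove that the set $A=\{(x_n,y_n),(y_n,x_n)\mid n\ge 0\}$ determined by the stated recursion is exactly the set of $\P$-positions of {\sc Wyt}$(a,b)$, by verifying the two properties that characterise $\P$-positions (cf.\ Lemma~\ref{SG}): \emph{stability}, that no legal move joins two positions of $A$, and \emph{absorption}, that from every position outside $A$ some legal move enters $A$. Equivalently one inducts on the total pile size $p+q$, using that a position is a $\P$-position iff every move leaves $A$ and some move enters $A$. Throughout I use the symmetry $(p,q)\leftrightarrow(q,p)$ and write positions with $p\le q$. Recall from the move rule that a single-pile removal of any size is always legal (it has $\min=0<b$), while a genuine two-pile move removing $\alpha,\beta\ge 1$ is legal exactly when $\min(\alpha,\beta)<b$ or $|\alpha-\beta|<a$.

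First I would record the arithmetic forced by the recursion. Since $\operatorname{mex_b}(S)$ returns a value $s_i+b$ lying strictly between two consecutive elements of $S\cup\{-b\}$, each $x_n$ is strictly larger than, and distinct from, all previously used values; hence $(x_n)$ is strictly increasing and $x_n$ is new at every step. Because $y_n=x_n+an$, the sequence $(y_n)$ is strictly increasing as well, the differences $y_n-x_n=an$ are pairwise distinct multiples of $a$, and (for $a\ge 1$) the sets $X=\{x_n\}$ and $Y=\{y_n\}$ are disjoint. The essential structural fact, which is exactly what $\operatorname{mex_b}$ encodes, is a \emph{covering lemma}: the built set $X\cup Y$ meets the integers in clusters whose internal gaps are at most $b$ and which are separated by gaps exceeding $b$, with $x_n$ the first integer above the $n$-th such $b$-gap. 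The degenerate case $a=0$ collapses $A$ onto the diagonal and reduces to two-pile {\sc Nim}; I would dispose of it first and then assume $a\ge 1$.

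For stability I would classify a putative move between $P_m=(x_m,y_m)$ and $P_n=(x_n,y_n)$. A single-pile move would make $P_m$ and $P_n$ share a coordinate, contradicting the distinctness and disjointness of $X$ and $Y$. A balanced two-pile move ($|\alpha-\beta|<a$) yields $|\alpha-\beta|=a|m-n|$ in the orientation-preserving case and $|\alpha-\beta|=a(m+n)$ if the piles reorder; in either case it is a multiple of $a$, so the requirement $|\alpha-\beta|<a$ forces $m=n$ (the reordering case would force $m=n=0$, a terminal position with no move). The remaining possibility is a two-pile move with $1\le\min(\alpha,\beta)<b$; here I would invoke the covering lemma to show that two distinct $\P$-positions cannot have a pair of coordinates lying within the same width-$b$ cluster while also respecting $y-x=a\cdot(\text{index})$, which rules the move out.

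The heart of the argument, and the step I expect to be the main obstacle, is absorption. Given $(p,q)\notin A$ with $p\le q$, I would locate $p$ relative to $X\cup Y$. If $p=x_n$ and $q>y_n$, a single-pile reduction of the larger pile reaches $(x_n,y_n)\in A$. In every other case---$p=x_n$ with $q<y_n$, or $p=y_n$, or $p\notin X\cup Y$---I would use the covering lemma to choose the correct index and reach $A$ by a two-pile move, balanced or one-small. The real difficulty is that one must check, case by case, that the required removals actually satisfy the disjunctive constraint $\min(\alpha,\beta)<b$ or $|\alpha-\beta|<a$: the ``balanced'' alternative handles positions inside the diagonal band, where the difference is already close to some $an$, while the ``one-small'' alternative handles positions where one coordinate already sits near a cluster of $X\cup Y$. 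Showing that these two alternatives between them cover \emph{every} residual position is precisely where the bound $b$ on intra-cluster gaps and the arithmetic progression $y_n-x_n=an$ must be combined, and it is the only place where the exact definition of $\operatorname{mex_b}$ is indispensable. Once absorption is established, Lemma~\ref{SG} identifies $A$ with the zeros of $\G$, that is, with the $\P$-positions, completing the proof; the identity $y_n=x_n+an$ and the starting value $(x_0,y_0)=(0,0)$ are immediate from the recursion.
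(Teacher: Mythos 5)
The first thing to note is that the paper offers no proof of Proposition~\ref{Gur-Wyt-P} at all: it is imported verbatim from \cite{Gur12} and used only to derive Proposition~\ref{tNIMab}, so your attempt must stand entirely on its own. The framework you chose is the standard and correct one (show the candidate set $A$ enjoys the two kernel properties of Lemma~\ref{SG}: no legal move joins two positions of $A$, and every position outside $A$ has a legal move into $A$), but two of your supporting claims are defective. The assertion that each $x_n$ ``is strictly larger than, and distinct from, all previously used values'' is false: already in {\sc Wythoff} ($a=b=1$) one has $x_3=4<5=y_2$. Monotonicity of $(x_n)$ is true, but it requires the correct argument --- all gaps between consecutive used values below $x_n$ are at most $b$, and adjoining $x_n,y_n$ only refines gaps, whence $x_{n+1}\ge x_n+b$. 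Relatedly, your ``covering lemma'' is never given a precise statement, let alone a proof. What stability against moves with $1\le\min(\alpha,\beta)<b$ actually needs is the inequality $x_n-z\ge b$ for every previously used value $z<x_n$; this does follow from the definition of $\operatorname{mex_b}$ (any such $z$ lies at or below the element $s_i$ for which $x_n=s_i+b$), but you neither isolate this statement nor verify it, and without it the third branch of your stability analysis is empty.

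The more serious gap is absorption, which you yourself call ``the heart of the argument'' and ``the main obstacle'' and then do not carry out. For $b\ge2$ the set $X\cup Y$ does not cover all integers (unlike the complementary Beatty sequences of {\sc Wythoff}), so in the unresolved cases --- $p=x_n$ with $q<y_n$, and above all $p\notin X\cup Y$ --- you must exhibit a concrete index $m$ and concrete removals $(\alpha,\beta)$ landing in $A$, and check the disjunction $\min(\alpha,\beta)<b$ or $|\alpha-\beta|<a$; no such construction appears in the proposal, and this is precisely the place where the $b$-gap structure of $\operatorname{mex_b}$ and the arithmetic progression $y_n-x_n=an$ must interact. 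As written, the proposal is a correct strategy with the decisive case analysis missing, so it cannot be accepted as a proof of the proposition. (A small additional slip: for $a=0$ and $b\ge2$ the game does not reduce to two-pile {\sc Nim}; its $\P$-positions are the pairs $(nb,nb)$, not all diagonal pairs, since for instance $(1,1)$ is then movable to $(0,0)$.)
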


%%%%%%%%%%%%%%%%%%%%%%%%%%%%%%%%%%%%%%%%%%%%%%%%%%%%%%%%%%%%%%%%%%%%%%%%%%%%%%%%%%%%%%%%%
\begin{proposition} [\cite{Gur12}] \label{Gur-Wyt-P-mis}
The sequence $(x'_n,y'_n)_{n \geq 0}$ of the $\P$-positions of mis\`{e}re version
of game {\sc Wyt}$(a,b)$ satisfies the following recursion:
\begin{itemize}  \itemsep0em
\item [\rm{(i)}] if $a = 1$, then $(x_0', y_0') = (b+1,b+1)$ and $x_n' =
\operatorname{mex_b} \{x_i', y_i' \mid 0 \leq i < n\}, \quad y_n' = x_n' + a n$;
\item [\rm{(ii)}] if $a \geq 2$, then $x_n' =
\operatorname{mex_b} \{x_i', y_i' \mid 0 \leq i < n\}, \quad y_n' = x_n' + a n + 1$.
\qed
\end{itemize}
\end{proposition}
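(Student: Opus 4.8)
The plan is to verify directly that the symmetric set $P' = \{(x_n', y_n'),\, (y_n', x_n') \mid n \geq 0\}$ generated by the stated recursion is precisely the set of misère $\P$-positions, i.e. the zero set of $\G^-$. Recall that in the misère convention the unique terminal position $(0,0)$ must be an $\N$-position, since $\G^-$ equals $1$ there; so a candidate set $P'$ coincides with the misère $\P$-positions if and only if (a) $(0,0) \notin P'$ and no move joins two positions of $P'$ (internal stability), and (b) every non-terminal position outside $P'$ admits a move into $P'$ (absorption). These two conditions, by the same backward induction on $d(x)$ used throughout the paper, force $\G^-(x) = 0 \Leftrightarrow x \in P'$.

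First I would record the arithmetic backbone of the sequences. From $x_n' = \operatorname{mex_b}\{x_i', y_i' \mid 0 \le i < n\}$ and $y_n' = x_n' + an + c$ (with $c = 0$ and base $(b+1,b+1)$ when $a = 1$, and $c = 1$ when $a \ge 2$), one proves by induction that $(x_n')$ and $(y_n')$ are strictly increasing, that $X = \{x_n'\}$ and $Y = \{y_n'\}$ are disjoint, and that the chosen coordinates obey the $b$-spacing dictated by $\operatorname{mex_b}$, i.e. consecutive selected values differ by more than $b$ exactly at the gaps produced by the $\operatorname{mex_b}$ rule. The crucial quantitative consequences are twofold: the differences $y_n' - x_n' = an + c$ are strictly increasing, so distinct levels have distinct, $a$-separated differences; and the $\operatorname{mex_b}$ gap keeps the surviving coordinates at least $b$ apart. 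This mirrors the Beatty-type structure behind Proposition \ref{Gur-Wyt-P}, now carried out for the misère base data.

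With this spacing information in hand, stability (a) is checked by cases on the move type. A type-(i) move (removing from a single pile) fixes one coordinate, so a move $(x_m', y_m') \to (x_n', y_n')$ with $m \neq n$ would force a coordinate collision contradicting the disjointness and strict monotonicity of $X$ and $Y$. A genuine type-(ii) move removes $x$ from one pile and $y$ from the other with $\min(x,y) < b$ or $|x-y| < a$; but passing between two levels $m \neq n$ changes the coordinate difference by $a|m-n| \ge a$, violating $|x-y| < a$, while the $b$-spacing forces the smaller removal to be at least $b$, violating $\min(x,y) < b$ --- a contradiction. Absorption (b) is the dual computation: given $(p,q) \notin P'$ with $p \le q$, one locates the level $n$ whose coordinate sits at the $\operatorname{mex_b}$-value below $p$ and reaches $(x_n', y_n')$ or $(y_n', x_n')$ by an admissible move, the $\operatorname{mex_b}$ definition guaranteeing that the required removal satisfies $\min < b$ or $|{\cdot}| < a$. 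The base cases $(0,1)$ (for $a \ge 2$) and $(b+1,b+1)$ (for $a = 1$), together with $(0,0) \notin P'$, are verified by hand.

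The step I expect to be the main obstacle is this spacing lemma and its use in the type-(ii) stability argument, because the interaction of the two removal constraints ($\min(x,y) < b$ and $|x-y| < a$) with the $\operatorname{mex_b}$-induced gaps is exactly where the combinatorics is delicate, and where the split between $a = 1$ and $a \ge 2$, and the $+1$ shift of $y_n'$, genuinely matter. A cleaner alternative, very much in the spirit of this paper, would be to first show that Wyt$(a,b)$ is miserable by producing sets $V'_{0,1}, V'_{1,0}$ meeting the hypotheses of Theorem \ref{Mis.ss4}; since for a miserable game the misère $\P$-positions are exactly $V_{0,0} \cup V_{1,0} = (V_0 \setminus V_{0,1}) \cup V_{1,0}$, the misère recursion would then follow from the normal one (Proposition \ref{Gur-Wyt-P}) by adjoining the swap set $V_{1,0}$ and deleting $V_{0,1}$. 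This reduces the problem to identifying the small swap sets, at the cost of first establishing miserability.
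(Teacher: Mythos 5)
The paper never proves this proposition: it is imported verbatim from \cite{Gur12} (note the \qed inside the statement), and is then used, together with Proposition \ref{Gur-Wyt-P}, to deduce Proposition \ref{tNIMab}. So there is no in-paper proof to compare against, and your argument has to stand on its own. Your overall framework is the correct one for misère play: since the terminal $(0,0)$ has $\G^-=1$, the misère $\P$-positions are indeed the unique set that excludes $(0,0)$, is internally stable, and absorbs every non-terminal outside position; and your stability computation rests on the two genuinely correct quantitative facts (differences $y'_n-x'_n$ at distinct levels are $a$-separated, and the $\operatorname{mex_b}$ rule forces selected coordinates at distinct levels to be at least $b$ apart).

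There are, however, concrete gaps. First, your ``arithmetic backbone'' lemma is false as stated in the case $a=1$: there $x'_0=y'_0=b+1$, while $x'_1=\operatorname{mex_b}\{b+1\}=0$, $y'_1=1$, and $x'_2=2b+1$, so the sequences are not strictly increasing and the sets $X$, $Y$ are not disjoint --- the anomalous level $0$ sits strictly between levels $1$ and $2$. Since all of your spacing and stability deductions are built on monotonicity, the moves into and out of $(b+1,b+1)$ must be handled by separate explicit computations (they do work out, e.g.\ $(b+1,b+1)\to(0,1)$ has removals $(b+1,b)$ with $\min=b\not<b$ and difference $1\not<a$), and the induction must be restated with level $0$ exceptional. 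Second, stability as written covers only moves onto unswapped targets $(x'_n,y'_n)$; for swapped targets $(y'_n,x'_n)$ the difference of removals is $a(m+n)+2c$, not $a|m-n|$, a separate (if easy) case. Third, and most seriously, absorption --- every non-terminal position outside $P'$ has a legal move into $P'$ --- is the harder half of any such proof and you dispose of it in one sentence. Unlike classical {\sc Wythoff}, when $b\geq 2$ the selected values $\{x'_n\}\cup\{y'_n\}$ do not exhaust the nonnegative integers (for $a=1$, $b=2$ they are $0,1,3,5,7,9,\ldots$), so ``locating the level below $p$'' requires a real case analysis interleaving the $\min<b$ moves with the $|u-v|<a$ moves, plus the misère anomalies near the origin; until that is written, the identification $P'=\{\G^-=0\}$ is not established. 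Finally, a caution on your alternative route: you may not invoke Proposition \ref{tNIMab} or the paper's miserability claims for {\sc Wyt}$(a,b)$, since the paper derives those \emph{from} this very proposition; proving miserability from scratch via Theorem \ref{Mis.ss4} requires exhibiting and verifying the swap sets, which for $a\geq 2$ (where the misère $\P$-positions coincide with $V_{1,0}$) is essentially the same stability/absorption work in disguise.
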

%%%%%%%%%%%%%%%%%%%%%%%%%%%%%%%%%%%%%%%%%%%%%%%%%%%%%%%%%%%%%%%%%%%%%%%%%%%%%%%%%%%%%%%%%
%%%%%%%%%%%%%%%%%%%%%%%%%%%%%%%%%%%%%%%%%%%%%%%%%%%%%%%%%%%%%%%%%%%%%%%%%%%%%%%%%%%%%%%%%

\begin{proposition} [\cite{Gur12}] \label{tNIMab}
Game {\sc Wyt}$(a,b)$ is strongly miserable whenever $a \geq 2$.
\qed
\end{proposition}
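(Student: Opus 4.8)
The plan is to reduce strong miserability to a disjointness statement about two explicitly known sets of $\P$-positions, exactly along the lines of the proof of Proposition~\ref{Fr-Nim-SM} for {\sc Wyt}$(a)$. By Theorem~\ref{SM=P}~(iii), a game is strongly miserable if and only if it has no $(0,0)$-position. Now a $(0,0)$-position is precisely a position $x$ with $\G(x)=0$ and $\G^-(x)=0$, that is, a position that is simultaneously a $\P$-position of the normal version and of the mis\`ere version. Hence it suffices to prove that, for $a\ge 2$, the set of normal $\P$-positions and the set of mis\`ere $\P$-positions of {\sc Wyt}$(a,b)$ are disjoint, and then strong miserability is immediate from Theorem~\ref{SM=P}~(iii).

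First I would invoke the explicit recursive descriptions already available: Proposition~\ref{Gur-Wyt-P} for the normal version and Proposition~\ref{Gur-Wyt-P-mis}~(ii) for the mis\`ere version (the relevant case, since $a\ge 2$). Writing $(x_n,y_n)_{n\ge 0}$ for the normal $\P$-positions and $(x'_n,y'_n)_{n\ge 0}$ for the mis\`ere ones, both indexed so that the $x$-coordinate is the smaller entry under the convention $x\le y$, the only facts I need are the affine relations $y_n = x_n + an$ and $y'_n = x'_n + an + 1$; the precise values of the $x$-coordinates, which depend on $b$ through $\operatorname{mex_b}$, will play no role.

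Next I would argue by contradiction. Suppose a normal $\P$-position $(x_m,y_m)$ coincides with a mis\`ere $\P$-position $(x'_n,y'_n)$. Matching the two ordered coordinates gives $x_m = x'_n$ and $y_m = y'_n$. Substituting the formulas for the larger coordinates yields $x_m + am = x'_n + an + 1 = x_m + an + 1$, hence $a(m-n) = 1$. Since $a\ge 2$ and $m-n$ is an integer, the left-hand side is either $0$ (when $m=n$) or has absolute value at least $a\ge 2$, so it can never equal $1$; this is the contradiction. This is nothing but Corollary~\ref{Fr-Nim-P-dif} transplanted verbatim into the $\operatorname{mex_b}$ setting, which is why the $b$-dependence is irrelevant.

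I do not expect any genuine obstacle in the core computation, which collapses to the one-line arithmetic above once Propositions~\ref{Gur-Wyt-P} and~\ref{Gur-Wyt-P-mis} are granted. The only point requiring a little care is the bookkeeping of the indexing and the convention $x\le y$: I must make sure that a coincidence of two positions as unordered pairs really forces the smaller entries to match (so that $x_m=x'_n$) and the larger entries to match (so that $y_m=y'_n$), rather than permitting a ``crossed'' identification. This is guaranteed because within each family the $x$-coordinate is strictly increasing and equals the minimum of the pair, so minima are matched with minima. With disjointness established, strong miserability of {\sc Wyt}$(a,b)$ for $a\ge 2$ follows at once.
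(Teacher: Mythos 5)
Your proposal is correct and follows essentially the same route as the paper: reduce to the non-existence of $(0,0)$-positions via Theorem~\ref{SM=P}, invoke Propositions~\ref{Gur-Wyt-P} and~\ref{Gur-Wyt-P-mis} for $a\ge 2$, and derive the contradiction $a(m-n)=1$. Your extra remark that minima must match with minima (so no ``crossed'' identification can occur) is a point the paper leaves implicit, but it does not change the argument.
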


%Now we prove Proposition \ref{tNIMab}.
%First, let us consider the case $a \geq 2$.
%In this case

\begin{proof}
We only need to show that
the normal and mis\`ere versions of {\sc Wyt}$(a,b)$
do not share $\P$-positions,
or in other words, that there is no $(0,0)$-position.
Then,  game {\sc Wyt}$(a,b)$ is strongly miserable, by Theorem \ref{SM=P}.

Let $(x_n,y_n)$  and   and $(x_m',y_m')$
be $\P$-positions of the normal and mis\`ere versions of {\sc Wyt}$(a,b)$, respectively.
%be a $\P$-position in mis\`{e}re {\sc Wyt}$(a,b)$ game.
 Suppose these two positions coincide, $x_m' = x_m$ and $y_m' = y_n$.
 By Propositions \ref{Gur-Wyt-P} and \ref{Gur-Wyt-P-mis} for case $a \geq 2$, one obtains equality $a(n - m) = 1$,
 which is a contradiction since 1 cannot be a multiple of $a$.
 % Therefore, {\sc Wyt}$(a,b)$ and its mis\`{e}re version do not share $\P$-positions.
\end{proof}

%%%%%%%%%%%%%%%%%%%%%%%%%%%%%%%%%%%%%%%%%%%%%%%%%%%%%%%%%%%%%%%%%%%%%%%%%%%%%%%%%%%%%%%%%
% We leave the proof of the following result to the reader.

The case  $a \leq 1$  was studied in \cite{Gur11, Gur12}.
Combining these results with Proposition \ref{tNIMab} we
obtain the following criterion.

\begin{proposition}
Game {\sc Wyt}$(a,b)$ is miserable and returnable if
$(a = 1$ and $b \geq 1)$ or  $(b = 1$ and $a \leq 1)$.
Otherwise, the game is strongly miserable.
\qed
\end{proposition}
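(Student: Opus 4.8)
The statement combines three cases:
- Wyt(a,b) is miserable and returnable if (a=1 and b≥1) or (b=1 and a≤1)
- Otherwise (i.e., a≥2), the game is strongly miserable

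Let me think about what's already available.The plan is to assemble the claimed criterion by combining the results already established for the three parameter regimes, treating the statement as a bookkeeping synthesis rather than a fresh computation. The regime $a \geq 2$ is already fully handled: Proposition \ref{tNIMab} shows that {\sc Wyt}$(a,b)$ is strongly miserable whenever $a \geq 2$, so the second clause (``Otherwise, the game is strongly miserable'') requires no new argument beyond citing it. What remains is to verify the two subcases of the first clause, namely that {\sc Wyt}$(a,b)$ is miserable \emph{and} returnable when $a = 1$ (with $b \geq 1$ arbitrary) and when $b = 1$ with $a \leq 1$.

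For these two ``low-parameter'' cases I would follow the same template used for {\sc Euclid} in Proposition \ref{Euclid-M} and for {\sc Wythoff} in Proposition \ref{Wyt-M}: produce explicit candidate sets $V'_{0,1}$ and $V'_{1,0}$ and invoke Theorem \ref{Mis.ss4}. The natural candidates come from the $\P$-position recursions already recorded: Proposition \ref{Gur-Wyt-P} gives the normal $\P$-positions, which should form $V'_{0,1}$, and Proposition \ref{Gur-Wyt-P-mis}(i) gives the mis\`ere $\P$-positions for the case $a=1$, which should form $V'_{1,0}$. (For $b=1$, $a \leq 1$ one recovers {\sc Wythoff} when $a=1$ and two-pile {\sc Nim} when $a=0$, both already known to be miserable by Propositions \ref{Wyt-M} and \ref{Nim-M}.) With these sets in hand I would check conditions $(\mathrm{i})$--$(\mathrm{iv})$ of Theorem \ref{SMis.ss4} (inherited by Theorem \ref{Mis.ss4}) together with the trichotomy $(\mathfrak{a}')/(\mathfrak{b}')/(\mathfrak{c}')$, just as in the {\sc Euclid} proof where any position outside the two sets is shown to be either not movable to $V'_{0,1} \cup V'_{1,0}$ or movable to both.

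For returnability I would argue separately using Definition \ref{D.FR}(ii): from a swap position $x$ movable to a non-terminal $y$, I must exhibit a move from $y$ back to a $(0,1)$- or $(1,0)$-position of the correct type. Here the recursive structure of the $\P$-position sequences should make the return move explicit; alternatively, once miserability is established one can often verify returnability directly from the geometry of the moves, as was done for {\sc Wythoff} in the proposition preceding this one. The main obstacle I anticipate is the case $a=1$, $b>1$, where the $\operatorname{mex}_b$ operator (rather than ordinary $\operatorname{mex}$) governs the $\P$-positions; here the verification that every non-swap position satisfies $(\mathfrak{b}')$ or $(\mathfrak{c}')$ requires understanding how the $b$-gap structure of the sequence $\{x_i, y_i\}$ interacts with the enlarged move set (the condition $\min(x,y) < b$), and checking that a position failing $(\mathfrak{b}')$ genuinely reaches \emph{both} $V'_{0,1}$ and $V'_{1,0}$ rather than only one. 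This is where I would concentrate the technical effort; the $b=1$ subcases reduce to previously analyzed games and should be essentially immediate.
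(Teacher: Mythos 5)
Your handling of $a \geq 2$ (cite Proposition \ref{tNIMab}) and of $b=1$, $a \leq 1$ (reduce to {\sc Wythoff} and two-pile {\sc Nim}, Propositions \ref{Wyt-M} and \ref{Nim-M}) matches the paper, which in fact offers no more than this kind of assembly: it combines Proposition \ref{tNIMab} with the results of \cite{Gur11, Gur12} for $a \leq 1$ and gives no further argument. But your self-contained plan for the remaining case $a=1$, $b \geq 1$ fails at its key step. You propose $V'_{0,1} = V_P^N$ (all normal $\P$-positions, via Proposition \ref{Gur-Wyt-P}) and $V'_{1,0} = V_P^M$ (all mis\`ere $\P$-positions, via Proposition \ref{Gur-Wyt-P-mis}(i)). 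Theorem \ref{Mis.ss4} requires these two sets to be \emph{disjoint}, and its conclusion forces $V'_{0,1} = V_{0,1}$; neither can hold with your choice, because $V_P^N \cap V_P^M$ is exactly the set of $(0,0)$-positions, and for $a=1$ this set is nonempty. Concretely, $(3,5) \in V_P^N \cap V_P^M$ for {\sc Wythoff} $=$ {\sc Wyt}$(1,1)$, and running the recursions for {\sc Wyt}$(1,2)$ gives $V_P^N = \{(0,0),(2,3),(5,7),\ldots\}$ and $V_P^M = \{(3,3),(0,1),(5,7),\ldots\}$, which share $(5,7)$. The correct candidates are the symmetric differences $V'_{0,1} = V_P^N \setminus V_P^M$ and $V'_{1,0} = V_P^M \setminus V_P^N$; this is precisely what the paper does for {\sc Wythoff} in Proposition \ref{Wyt-M}, where these are the finite sets $\{(0,0),(1,2),(2,1)\}$ and $\{(0,1),(1,0),(2,2)\}$, not the infinite $\P$-position sets.

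Second, you read ``otherwise'' as ``$a \geq 2$,'' but in the parameter range $a \geq 0$, $b \geq 1$ the complement of ``($a=1$ and $b\geq 1$) or ($b=1$ and $a\leq 1$)'' is $\{a \geq 2\} \cup \{a = 0,\ b \geq 2\}$. The case $a=0$, $b \geq 2$ (where the move constraint degenerates to $\operatorname{min}(x,y) < b$, since $|x-y| < 0$ is vacuous) is not covered by Proposition \ref{tNIMab}, and it cannot be dispatched by the same disjointness computation, because Proposition \ref{Gur-Wyt-P-mis} provides no mis\`ere recursion for $a=0$. To close this case you would need a separate argument showing {\sc Wyt}$(0,b)$, $b \geq 2$, has no $(0,0)$-position (then apply Theorem \ref{SM=P}(iii)), or, as the paper does, an appeal to \cite{Gur11, Gur12}.
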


%%%%%%%%%%%%%%%%%%%%%%%%%%%%%%%%%%%%%%%%%%%%%%%%%%%%%%%%%%%%%%%%%%%%%%%%%%%%%%%%%%%%%%%%%
%%%%%%%%%%%%%%%%%%%%%%%%%%%%%%%%%%%%%%%%%%%%%%%%%%%%%%%%%%%%%%%%%%%%%%%%%%%%%%%%%%%%%%%%%

%%%%%%%%%%%%%%%%%%%%%%%%%%%%%%%%%%%%%%%%%%%%%%%%%%%%%%%%%%%%%%%%%%%%%%%%%%%%%%%%%%%%%%%%%
%%%%%%%%%%%%%%%%%%%%%%%%%%%%%%%%%%%%%%%%%%%%%%%%%%%%%%%%%%%%%%%%%%%%%%%%%%%%%%%%%%%%%%%%%
\subsection{Moore's {\sc Nim$_{n,\leq k}$} and  its variants} \label{SS.NIMB}

% We examine the miserability of several variants of Moore's {\sc Nim}$_k$.

\subsubsection{Moore's {\sc Nim$_{n,\leq k}$}}
The following game was introduced in 1910  by Moore \cite{Moore}.
Let $k$ and $n$ be two positive integers such that $k \leq n$.
% The game is played with  $n$  piles as follows.
By one move a player has to reduce (strictly) at least 1  and at most  $k$
from given  $n$  piles of  $(x_1, \ldots, x_n)$  tokens.
Moore denoted this game {\sc Nim}$_k$, but
we use notation {\sc Nim}$_{n,\leq k}$ to include $n$.

We will show that game of {\sc Nim}$_{n,\leq k}$ is miserable. For $k=1$, it is known.

\begin{proposition} \label{NIMB-M}
The game of {\sc Nim}$_{n,\leq k}$ is miserable for $2 \leq k < n$.
Moreover, let $x = (x_1,\ldots,x_n)$ be a position in {\sc Nim}$_{n,\leq k}$ and
$l$ be the number of non-empty piles in $x$. Then
\begin{itemize}  \itemsep0em
\item [$(a)$] $x$ is a $(0,1)$-position if and only if $x_i \leq 1$ for all $i$ and $l \equiv 0 \bmod{(k+1)}$;
\item [$(b)$] $x$ is a $(1,0)$-position if and only if $x_i \leq 1$ for all $i$ and $l \equiv 1 \bmod{(k+1)}$.
\end{itemize}
\end{proposition}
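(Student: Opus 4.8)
The plan is to use the machinery of Theorem~\ref{Mis.ss4}, exactly as was done for {\sc Wythoff} in Proposition~\ref{Wyt-M} and for {\sc Euclid} in Proposition~\ref{Euclid-M}. That is, I would propose explicit candidate sets $V'_{0,1}$ and $V'_{1,0}$ dictated by the formulas $(a)$ and $(b)$ in the statement, verify the structural conditions $(\mathrm{i})$--$(\mathrm{iv})$ of Theorem~\ref{SMis.ss4} together with the miserability trichotomy $(\mathfrak{a}'),(\mathfrak{b}'),(\mathfrak{c}')$, and then invoke the theorem to conclude both that the game is miserable and that $V'_{0,1}=V_{0,1}$, $V'_{1,0}=V_{1,0}$, which yields the claimed characterizations of the swap positions at the same time.

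\textbf{Setting up the candidate sets.} Following the formulas, I would set
\[
V'_{0,1} = \{x : x_i \leq 1 \text{ for all } i, \ l(x) \equiv 0 \bmod (k+1)\},
\]
\[
V'_{1,0} = \{x : x_i \leq 1 \text{ for all } i, \ l(x) \equiv 1 \bmod (k+1)\},
\]
where $l(x)$ is the number of nonempty piles. These are clearly disjoint. Condition~$(\mathrm{ii})$ is immediate: the unique terminal position has $l=0$, hence lies in $V'_{0,1}$. For independence, condition~$(\mathrm{i})$: any move from a $0/1$-position removes tokens from between $1$ and $k$ piles, so $l$ decreases by an amount between $1$ and $k$, which can never preserve a residue class modulo $k+1$; thus neither set contains a move between two of its own members. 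For $(\mathrm{iii})$ and $(\mathrm{iv})$, from a position with $l \equiv 0$ (and $l\geq k+1$, using $k<n$) one can empty exactly one pile to reach $l-1 \equiv k \equiv -1$, but more to the point reach $l\equiv 1$ by emptying $k$ piles — so a $V'_{0,1}$-position reaches $V'_{1,0}$, and symmetrically a $V'_{1,0}$-position with $l\equiv 1$ reaches $l\equiv 0$ by removing $1$ pile. These are the Moore-{\sc Nim} analogues of Bouton's parity argument.

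\textbf{The miserability trichotomy and the main obstacle.} The substantive step is verifying that every position $x \notin V'_{0,1}\cup V'_{1,0}$ satisfies either $(\mathfrak{b}')$ (not movable to the swap sets) or $(\mathfrak{c}')$ (movable to both). I expect this to be the crux of the argument and the place where the condition $2\leq k<n$ is genuinely used. I would split into cases according to whether $x$ has a pile of size $\geq 2$. If all piles satisfy $x_i\leq 1$ but $l \not\equiv 0,1$, then removing an appropriate number of piles (between $1$ and $k$) can hit either residue $0$ or residue $1$, giving $(\mathfrak{c}')$; here one must check that both targets are reachable, which uses $k\geq 2$ so that at least two consecutive residues below $l$ are accessible. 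If instead some pile exceeds $1$, the delicate point is that any move reaching a swap position must flatten \emph{all} oversized piles to $0$ or $1$ in a single move touching at most $k$ piles; I would argue that such a position is then either unable to reach the swap sets at all (giving $(\mathfrak{b}')$) or, when it can, can reach both residue classes (giving $(\mathfrak{c}')$), since once the reachability constraint is met there is freedom in choosing how many of the at-most-$k$ affected piles to empty versus reduce to $1$. Carefully bookkeeping which configurations of oversized piles permit a single-move reduction into $\{0,1\}^n$ — and confirming that whenever one residue is reachable so is the adjacent one — is the routine-but-fiddly heart of the proof, and I would organize it by the number of piles exceeding $1$ relative to $k$. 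Once the trichotomy is established, Theorem~\ref{Mis.ss4} delivers miserability and the equalities $V_{0,1}=V'_{0,1}$, $V_{1,0}=V'_{1,0}$, completing both the qualitative claim and the explicit descriptions $(a)$ and $(b)$.
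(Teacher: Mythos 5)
Your setup is exactly the paper's: the same candidate sets $V'_{0,1}$, $V'_{1,0}$, the same appeal to Theorem~\ref{Mis.ss4}, and the same verification of conditions (i)--(iv), all of which you do correctly. Your treatment of the flat case (all $x_i \leq 1$ but $l \not\equiv 0,1 \bmod (k+1)$) is also sound: writing $l \equiv r$ with $2 \leq r \leq k$, removing $r$ piles, respectively $r-1$ piles, lands in residue $0$, respectively $1$, and $k \geq 2$ makes both moves legal.

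The gap is the remaining case, a pile of size $\geq 2$, which you yourself call ``the routine-but-fiddly heart of the proof'' and then do not carry out. Condition M(v) requires showing that if such a position $x$ has \emph{any} move into $V'_{0,1} \cup V'_{1,0}$, then it has moves into \emph{both} sets; you only assert that ``once the reachability constraint is met there is freedom in choosing how many of the at-most-$k$ affected piles to empty versus reduce to $1$'' and that ``whenever one residue is reachable so is the adjacent one.'' That assertion is true, but it is precisely the content that must be proved, and it is where the paper spends essentially all of its effort: given a move $M_1$ into one of the two sets, the paper explicitly constructs a second move into the other set via a five-part case analysis (reduce every touched pile by one extra token; restore one emptied pile to height $1$; empty a touched pile that $M_1$ left at height $1$; additionally empty an untouched pile of size $1$; leave every touched pile at height $1$), in each case checking that failure of the construction would force $x$ itself to be a swap position, contradicting $x \notin V'_{0,1}\cup V'_{1,0}$. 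If you prefer to complete the argument along your own lines, you need an actual proof of the both-residues claim, for instance: with $t \leq k$ oversized piles and $s$ piles of size $1$, the one-counts reachable by a single move landing in $\{0,1\}^n$ are exactly the integers in the interval $\bigl[s - \min(s,k-t),\, s+t\bigr]$, and modular arithmetic shows this interval cannot contain a number $\equiv 0 \pmod{k+1}$ without containing one $\equiv 1 \pmod{k+1}$, nor conversely (the extremal configurations force either $s-\min(s,k-t)=0 \equiv 1$, which is impossible, or $s+t \equiv 0$ with lower endpoint $0$, whence $s+t \geq k+1$ and $1$ lies in the interval). As submitted, the proposal establishes the easy conditions of Theorem~\ref{Mis.ss4} but not the one that actually makes the game miserable.
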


\begin{proof}
Let us set
\[V'_{0,1} = \{(x_1,\ldots,x_n) \mid \text{ $x_i \leq 1$ for all $i$ and $l \equiv 0 \bmod{(k+1)}$}\};\]
% and
\[V'_{1,0} = \{(x_1,\ldots,x_n) \mid \text{ $x_i \leq 1$ for all $i$ and $l \equiv 1 \bmod{(k+1)}$}\}.\]

We verify the conditions \rm{(i)} - \rm{M(v)} of Theorem \ref{Mis.ss4}.
Condition \rm{(i)} holds since there is no move between two arbitrary positions in each set since such a move must reduce $k+1$ piles.
Condition \rm{(ii)} holds sine $V'_{0,1}$ contains the terminal position $(0,0,\ldots,0)$.
Condition \rm{(iii)} holds since from every non-terminal position
in $V'_{0,1}$, the move removing exactly $k$ tokens terminates in $V'_{1,0}$.
Condition \rm{(iv)} holds since from every position in $V'_{1,0}$, the move removing exactly one token terminates in $V'_{0,1}$.
It remains to verify condition \rm{M(v)}.

Let $x$ be a position not in the set $V'_{0,1} \cup V'_{1,0}$. If there is no move from $x$ that terminates in  $V'_{0,1} \cup V'_{1,0}$ then the condition \rm{M(v)} holds and we are done. Assume that this is not the case. Then there exists one move $M_1$ from $x$ that terminates in either $V'_{0,1}$ or $V'_{1,0}$. We need to prove that $x$ is movable to both $V'_{0,1}$ and $V'_{1,0}$

Note that a move from $x$ reduces at most $k$ piles $x_{\pi(1)},x_{\pi(2)},\ldots,x_{\pi(k)}$ for a permutation $\pi$, meaning
    \[(M_1): \quad (x_{\pi(1)},x_{\pi(2)},\ldots,x_{\pi(k)}) \rightarrow (x_{\pi(1)}-y_1,x_{\pi(2)}-y_2,\ldots,x_{\pi(k)}-y_k)\]
with at least some $y_j \geq 1$.

\begin{enumerate} \itemsep0em
\item If the move $(M_1)$ terminates in $V'_{0,1}$, then it leaves $m(k+1)$ entries of size 1.
    \begin{enumerate} \itemsep0em
    \item If  $x_{\pi(i)}-y_i = 1$ for all $i$, then the corresponding move
    \[(M_2): \quad (x_{\pi(1)},x_{\pi(2)},\ldots,x_{\pi(k)}) \rightarrow (x_{\pi(1)}-y_1-1,x_{\pi(2)}-y_2-1,\ldots,x_{\pi(k)}-y_k-1)\]
    terminates in $V'_{1,0}$, leaving $(m-1)(k+1)+1$ entries of size 1.
    \item If $x_{\pi(i)}-y_i = 0$ for some $i$, then either there exists $i_0$
    such that $y_{i_0} \geq 2$ or there exist $i_0$ and $j_0$ such that $y_{i_0} \geq 1$ and $y_{j_0} \geq 1$.
    In fact, if otherwise, $x \in V'_{1,0}$, giving a contradiction. In either of cases, we can choose $(y'_1, y'_2, \ldots, y'_k)$ such that $0 \leq y'_i \leq y_i$ and $y'_1+y'_2+ \cdots + y'_k = y_1+y_2 + \cdots + y_k -1$. Then the corresponding move
        \[(M_3) \quad (x_{\pi(1)},x_{\pi(2)},\ldots,x_{\pi(k)}) \rightarrow (x_{\pi(1)}-y'_1,x_{\pi(2)}-y'_2,\ldots,x_{\pi(k)}-y'_k)\]
        terminates in $V'_{1,0}$, leaving $m(k+1)+1$ entries of size 1.
    \end{enumerate}
\item If the move $(M_1)$ terminates in $V'_{1,0}$, then it leaves $m(k+1)+1$ entries of size 1.
    \begin{enumerate} \itemsep0em
    %\item If $y_{i_0} = 0$ for some $i_0$, we can choose some pile $x_{j_0}$ of size 1 which is not touched in the move $(M_1)$. Then the move
    %    \[(M_4): \quad (x_{j_0}, x_{\pi(1)},x_{\pi(2)},\ldots,x_{\pi(k)}) \rightarrow (0, x_{\pi(1)}-y_1,x_{\pi(2)}-y_2,\ldots,x_{\pi(k)}-y_k)\]
    %    terminates in $V'_{0,1}$. Note that  $(M_4)$ imitates $(M_1)$ before removing the pile $x_{j_0}$, resulting in $m(k+1)$ entries of size 1.
    \item If $x_{\pi(i_0)} > y_{i_0}$ for some $i_0$, then we define $y'_i = y_i$ for all $i$, except for $y'_{i_0} = x_{\pi(i_0)}$. The move
        \[(M_4) \quad (x_{\pi(1)},x_{\pi(2)},\ldots,x_{\pi(k)}) \rightarrow (x_{\pi(1)}-y'_1,x_{\pi(2)}-y'_2,\ldots,x_{\pi(k)}-y'_k)\]
        terminates in $V'_{0,1}$, leaves $m(k+1)$ entries of size 1. Here $(M_4)$ imitates $(M_1)$ before removing the whole pile $x_{\pi(i_0)}$.
    \item If $x_{\pi(i)} = y_i$ for all $i$, we consider two cases.
        \begin{enumerate}
        \item If $y_{i_0} = 0$ for some $i_0$, we can choose some pile $x_{j_0} \notin \{x_{\pi(1)},x_{\pi(2)},\ldots,x_{\pi(k)}\}$ of size 1 which is not touched in the move $(M_1)$. Then the move
            \[(M_5): \quad (x_{j_0}, x_{\pi(1)},x_{\pi(2)},\ldots,x_{\pi(k)}) \rightarrow (0, x_{\pi(1)}-y_1,x_{\pi(2)}-y_2,\ldots,x_{\pi(k)}-y_k)\]
            terminates in $V'_{0,1}$. Note that  $(M_5)$ imitates $(M_1)$ before removing the pile $x_{j_0}$, resulting in $m(k+1)$ entries of size 1.
        \item If $x_{\pi(i)} = y_i > 0$ for all $i$, then there exists $i_0$ such that $x_{i_0} \geq 2$. Otherwise, $x \in V'_{0,1}$. Now, we have $y_{i_0} -1 = x_{i_0} - 1 \geq 1$. Then the move
            \begin{align*}
            (M_6): & \quad (x_{\pi(1)},x_{\pi(2)},\ldots,x_{\pi(k)}) \\
                   & \rightarrow (x_{\pi(1)}-(y_1-1),x_{\pi(2)}-(y_2-1),\ldots,x_{\pi(k)}-(y_k-1))
            \end{align*}
            terminates in $V'_{0,1}$, leaving $(m+1)(k+1)$ entries of size 1.
        \end{enumerate}
    \end{enumerate}
\end{enumerate}
\end{proof}

%%%%%%%%%%%%%%%%%%%%%%%%%%%%%%%%%%%%%%%%%%%%%%%%%%%%%%%%%%%%%%%%%%%%%%%%%%%%%%%%%%%%%%%%%

\subsubsection{An extension of {\sc Nim}$_{n,\leq k}$}
We extend {\sc Nim}$_{n,\leq k}$ to a game called {\sc Extended Nim}$_{n,\leq k}$
that has an extra pile with  $x_0$ tokens.
By one move, it is allowed to
reduce  $x_0$  and at most  $k$  of the remaining  $n$  piles.
Note that at least one pile must be reduced strictly;
reducing  $x_0$ is not compulsory and reducing only $x_0$ is legal.
%A move consists of remove an arbitrary number of tokens
% from the pile $x_0$ together with at most $k$ piles from $n$ piles $x_1, \ldots, x_n$.
When $k = n-1$, the game {\sc Extended Nim}$_{n,\leq n-1}$ is called
{\sc Extended Complementary Nim, or Exco-Nim}, for short, \cite{BGHM15}.

\begin{proposition} \label{NIMBE-M}
Let $n \geq 3$  and  $1 \leq k < n$.
Game {\sc Extended Nim}$_{n,\leq k}$ is miserable.  Moreover,
$x = (x_0, x_1, \ldots, x_n)$  is a $(0,1)$- $($resp.~$(1,0))$-position
if and only if  $x_0 = 0$  and   $(x_1, \ldots, x_n)$ is
a $(0,1)$- $($resp.~$(1,0))$-position of {\sc Nim}$_{n,\leq k}$.
\qed
\end{proposition}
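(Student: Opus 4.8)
The plan is to verify the characterization of Theorem~\ref{Mis.ss4} directly, by exhibiting the candidate sets $V'_{0,1}$ and $V'_{1,0}$ and checking conditions (i)--(iv) together with M(v). Guided by Proposition~\ref{NIMB-M}, I would set
\begin{align*}
V'_{0,1} &= \{(x_0,x_1,\ldots,x_n) \mid x_0 = 0,\ x_i \leq 1 \text{ for all } i \geq 1,\ l \equiv 0 \bmod (k+1)\}, \\
V'_{1,0} &= \{(x_0,x_1,\ldots,x_n) \mid x_0 = 0,\ x_i \leq 1 \text{ for all } i \geq 1,\ l \equiv 1 \bmod (k+1)\},
\end{align*}
where $l$ is the number of non-empty piles among $x_1,\ldots,x_n$. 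These are exactly the swap positions of the base game {\sc Nim}$_{n,\leq k}$ embedded at $x_0 = 0$. The containments $V'_{0,1} \subseteq V_{0,1}$ and $V'_{1,0} \subseteq V_{1,0}$ should follow because on the face $x_0 = 0$ the extra pile is already empty, and the only legal moves from such a position must strictly reduce at least one of the $n$ regular piles---so the dynamics restricted to this face coincide with the original game, whose swap structure is given by Proposition~\ref{NIMB-M}.

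The first task is independence (i): no move stays inside $V'_{0,1}$ or inside $V'_{1,0}$, because any move from a position with $x_0 = 0$ keeps $x_0 = 0$ and alters the count $l$ by reducing between $1$ and $k$ of the unit piles, changing $l \bmod (k+1)$ by a nonzero amount---the same argument as in Proposition~\ref{NIMB-M}. Condition (ii) is immediate since the global terminal $(0,0,\ldots,0)$ lies in $V'_{0,1}$. For (iii) and (iv), from a non-terminal position in $V'_{0,1}$ I remove exactly $k$ unit piles (landing in $V'_{1,0}$), and from $V'_{1,0}$ I remove exactly one unit pile (landing in $V'_{0,1}$); in both cases $x_0 = 0$ is untouched, so these moves stay on the face and mirror the original game.

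The main work, as in the base case, is condition M(v): every position outside $V'_{0,1} \cup V'_{1,0}$ is either not movable into the swap set at all, or movable to both $V'_{0,1}$ and $V'_{1,0}$. The hard part will be handling the extra pile $x_0$ in this dichotomy, since moves may or may not touch $x_0$, and a single move is allowed to reduce $x_0$ together with up to $k$ regular piles. The strategy is to reduce to the base-game analysis: if a position $x$ with $x_0 > 0$ is movable into the swap set, any such move must set $x_0 = 0$, after which the residual configuration on the regular piles is a move in {\sc Nim}$_{n,\leq k}$ reaching a swap position of that game; then I can invoke the internal case analysis of Proposition~\ref{NIMB-M} (the moves $M_2$ through $M_6$) to produce the complementary move into the other swap class, padding it with the same deletion of $x_0$. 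If $x_0 = 0$ already, the argument is literally that of Proposition~\ref{NIMB-M}. The delicate point is that reducing $x_0$ is optional and reducing $x_0$ alone is legal, so I must confirm that the ``not movable to the swap set'' alternative is not accidentally broken by a move touching only $x_0$; but such a move leaves all regular piles fixed, hence cannot reach a swap position unless $x$ was already on the face, so this case causes no trouble. Once M(v) is established, Theorem~\ref{Mis.ss4} yields that the game is miserable with $V_{0,1} = V'_{0,1}$ and $V_{1,0} = V'_{1,0}$, which is exactly the stated characterization of the $(0,1)$- and $(1,0)$-positions.
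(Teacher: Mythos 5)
Your overall plan --- feeding the face-embedded swap sets of {\sc Nim}$_{n,\leq k}$ into Theorem~\ref{Mis.ss4} --- is the natural reading of the paper's hint, and your verification of conditions (i)--(iv) is correct. But the step you yourself single out as the ``delicate point'' is exactly where the argument fails. Your claim that a move touching only $x_0$ ``cannot reach a swap position unless $x$ was already on the face'' is false: it breaks down whenever the regular part of $x$ is already a swap position of the base game. The extreme case is $x = (x_0, 0, \ldots, 0)$ with $x_0 \geq 1$. Here the regular part $(0,\ldots,0)$ is a base $(0,1)$-position, so the move $x_0 \rightarrow 0$, which touches only $x_0$, lands in your $V'_{0,1}$; on the other hand, every move from $x$ keeps all regular piles empty, so $x$ is movable to no position of your $V'_{1,0}$ (membership there forces $l \equiv 1 \bmod (k+1)$, hence at least one non-empty regular pile). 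Thus $(\mathfrak{a}')$, $(\mathfrak{b}')$, and $(\mathfrak{c}')$ all fail at $x$, condition M(v) does not hold for your sets, and Theorem~\ref{Mis.ss4} cannot be invoked.

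This is not a fixable slip of wording: it reflects the fact that the ``Moreover'' part of the statement is itself false under the paper's definition of the game, which explicitly allows reducing only $x_0$. Indeed, $(1,0,\ldots,0)$ has the terminal position as its unique option, so $\G(1,0,\ldots,0) = 1$ and $\G^-(1,0,\ldots,0) = 0$: it is a $(1,0)$-position with $x_0 \neq 0$. The repair is to enlarge your candidate set by this one exceptional position, setting $V'_{1,0} = \{(0,x_1,\ldots,x_n) \mid x_i \leq 1 \text{ for all } i,\ l \equiv 1 \bmod (k+1)\} \cup \{(1,0,\ldots,0)\}$. Independence survives (the unique option of $(1,0,\ldots,0)$ is the terminal, which lies in $V'_{0,1}$, and no position with $x_0 = 0$ can move to it), condition (iv) survives via $(1,0,\ldots,0) \rightarrow (0,\ldots,0)$, and in M(v) the previously troublesome positions $(x_0,0,\ldots,0)$ with $x_0 \geq 2$ become movable to both classes, via $x_0 \rightarrow 0$ and $x_0 \rightarrow 1$. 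The remaining cases of M(v) then do reduce to the dichotomy established inside Proposition~\ref{NIMB-M}, as you intended, provided you treat separately the case where the regular part $y$ of $x$ is itself a base swap position: there the ``residual move'' is the identity, not a legal base move, and you must instead exhibit an explicit base move from $y$ to the opposite swap class (which exists, since in this case $y \neq (0,\ldots,0)$). With these amendments the game is indeed miserable, but its set of $(1,0)$-positions is strictly larger than the proposition (and your proof) claims.
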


The proof is essentially similar to that of Proposition \ref{NIMB-M} and we leave it to the reader.

%%%%%%%%%%%%%%%%%%%%%%%%%%%%%%%%%%%%%%%%%%%%%%%%%%%%%%%%%%%%%%%%%%%%%%%%%%%%%%%%%%%%%%%%%

\subsubsection{{\sc Exact $k$-Nim}}
Let us consider a modification of {\sc Nim}$_{n,\leq k}$ in which by one move a player must (strictly) reduce exactly $k$ piles.
We denote this game by  {\sc Nim}$_{n, = k}$.
A closed formula for its SG function was obtained in \cite{BGHMM15}
for the case  $n \leq 2k$.

We prove that the game is miserable when  $n \leq 2k$.
We start with the case $n = 2k$.

%%%%%%%%%%%%%%%%%%%%%%
\begin{proposition}
Game {\sc Nim}$_{2k, = k}$ is miserable.
Moreover, $x = (x _1, \ldots, x_n)$ is a
\begin{itemize} \itemsep0em
\item  [\rm{(i)}] $(0,1)$-position if and only if $x_1  = \cdots = x_{k+1} \leq 1$;
\item  [\rm{(ii)}] $(1,0)$-position if and only if $d(x) = 1$.
\end{itemize}
\end{proposition}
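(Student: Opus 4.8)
The plan is to apply Theorem~\ref{Mis.ss4}: I would exhibit two sets $V'_{0,1}$, $V'_{1,0}$, verify its conditions \rm{(i)}--\rm{(iv)} together with the trichotomy $(\mathfrak{a}')$/$(\mathfrak{b}')$/$(\mathfrak{c}')$, and then read off from the theorem both miserability and the equalities $V_{0,1}=V'_{0,1}$, $V_{1,0}=V'_{1,0}$, which are exactly assertions \rm{(i)} and \rm{(ii)}. Guided by the statement I would set $V'_{0,1}$ to be the set of positions whose $k+1$ smallest piles are equal and at most $1$, and $V'_{1,0}=\{x : d(x)=1\}$. To organize the bookkeeping I would describe a position by the triple $(z,q,p)$, where $z,q,p$ count the piles of size $0$, of size $1$, and of size $\geq 2$, so that $z+q+p=2k$. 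A position is terminal exactly when $z\geq k+1$ (fewer than $k$ nonempty piles), and a short computation of the largest number of nonempty piles that can survive a single move gives the working description $d(x)=1\iff p\leq k-1,\ p+q\geq k,\ z\geq p+1$. In these terms $V'_{0,1}=\{z\geq k+1\}\cup\{z=0,\ q\geq k+1\}$ and $V'_{1,0}$ has $1\le z\le k$.

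Conditions \rm{(i)}--\rm{(iv)} should be routine. Condition \rm{(ii)} holds since terminal positions have $z\geq k+1$. Every move from a $d=1$ position lands in a terminal position, which gives at once independence of $V'_{1,0}$ and condition \rm{(iv)}; disjointness of the two sets is clear from the ranges of $z$ ($z=0$ or $z\ge k+1$ in $V'_{0,1}$ versus $1\le z\le k$ in $V'_{1,0}$). For \rm{(iii)}, from a non-terminal $x\in V'_{0,1}$ (so $z=0$, $q\geq k+1$) I would zero out $k$ of its ones, reaching $(z',q',p')=(k,\,q-k,\,p)$, which satisfies the $d=1$ inequalities because $p=2k-q\leq k-1$. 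Independence of $V'_{0,1}$ follows because a move from the part $\{z=0,\,q\geq k+1\}$ creates at most $k$ zeros (so it cannot reach $z'\geq k+1$) and, having only $p\leq k-1<k$ piles of size $\geq 2$, must create at least one zero (so it cannot stay in $\{z=0\}$).

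The real work is the trichotomy for $x\notin V'_{0,1}\cup V'_{1,0}$, which forces $x$ nonterminal with $z\leq k$ and, since $x\notin V'_{1,0}$, with $p\geq z$. If $z=q=0$ (all piles $\geq 2$), then every move keeps $\geq k$ piles of size $\geq 2$ and produces at most $k$ ones, so $x$ reaches neither set and $(\mathfrak{b}')$ holds. Otherwise I would establish $(\mathfrak{c}')$. Reaching $V'_{0,1}$ is easy: if $z\geq 1$, zeroing $k$ nonempty piles leaves $k-z\leq k-1$ nonempty piles, a terminal position; if $z=0$ and $q\geq 1$ (which forces $p\geq k$), demoting $k$ big piles to $1$ yields $q+k\geq k+1$ ones with no zeros, a position of the second part of $V'_{0,1}$. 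Reaching $V'_{1,0}$ is the crux: I would seek a move zeroing $a$ big piles, demoting $b$ big piles to $1$, and zeroing $c$ one-piles with $a+b+c=k$. The target triple $(z+a+c,\ q+b-c,\ p-a-b)$ meets the three $d=1$ inequalities iff $b\geq z$, $a+b\geq p-k+1$, and $a\geq p-k+1-z$. Setting $b=z$ collapses these to the single bound $a\geq p-k+1-z$, and one checks that the admissible range $\max(0,\,p-k+1-z,\,k-z-q)\le a\le \min(k-z,\,p-z)$ is nonempty precisely because $p\geq z$, $p+q\geq k$, $z\le k$, and $p\le 2k-1$ (the last holding since $z\geq 1$ or $q\geq 1$). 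This solvability of the inequality system is the step I expect to be the main obstacle; everything else is the kind of configuration count already seen in the proof of Proposition~\ref{NIMB-M}.

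Granting these verifications, Theorem~\ref{Mis.ss4} shows that {\sc Nim}$_{2k,=k}$ is miserable with $V_{0,1}=V'_{0,1}$ and $V_{1,0}=V'_{1,0}$; that is, $x$ is a $(0,1)$-position iff its $k+1$ smallest piles are equal and at most $1$, and a $(1,0)$-position iff $d(x)=1$, which establishes \rm{(i)} and \rm{(ii)}.
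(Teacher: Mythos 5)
Your proposal is correct and takes essentially the same approach as the paper: the paper's own proof exhibits exactly the same two sets, $V'_{0,1}=\{x \mid x_1=\cdots=x_{k+1}\leq 1\}$ and $V'_{1,0}=\{x \mid d(x)=1\}$, invokes Theorem~\ref{Mis.ss4}, and leaves all verification to the reader. Your $(z,q,p)$ bookkeeping, the explicit description of $d(x)=1$, and the solvability check for the system $b\geq z$, $a+b\geq p-k+1$, $a\geq p-k+1-z$ are a correct and complete version of precisely the checking the paper omits.
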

Recall that $d(x)$ denotes the greatest number of successive moves from  $x$  to the terminal position.
\begin{proof}
We leave to the reader to check that two sets
 $V'_{0,1} = \{x = (x_1, \ldots, x_n) \mid x_1 =  \cdots = x_{k+1} \leq 1\}$ and
 $V'_{1,0} = \{x \mid d(x) = 1\}$ satisfy conditions in Theorem \ref{Mis.ss4};
 hence the game is miserable with
 $V_{0,1} = V'_{0,1}$ and $V_{1,0} = V'_{1,0}$.
\end{proof}

Recall that $d(x)$ is the largest number of moves from $x$ to the terminal position.

%%%%%%%%%%%%%%%%%%%%%%%%%%%%%%%%%%%%%%%%%%%%%%%%%%%%%%%%%%%%%%%%%%%%%%%%%%%%%%%%%%%%%%%%%
\begin{proposition}
Game {\sc Nim}$_{n, = k}$ with $n < 2k$ is strongly miserable.
\end{proposition}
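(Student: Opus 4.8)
The plan is to reduce everything to Theorem~\ref{SM=P}: it suffices to show that {\sc Nim}$_{n,=k}$ with $k \le n < 2k$ has no $(0,0)$-position, since then the implication $\rm{(iii)}\Rightarrow\rm{(i)}$ of that theorem gives strong miserability immediately. So I would not even try to describe $V_{0,1}$ and $V_{1,0}$ explicitly (as in the $n=2k$ case); the hypothesis $n<2k$ lets one bypass that.

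The key step, and the only place where $n<2k$ is genuinely used, is the observation that \emph{every non-terminal position is movable to a terminal position in a single move}. First note that a position $x=(x_1,\dots,x_n)$ is terminal exactly when it has fewer than $k$ non-empty piles, because a legal move must strictly reduce exactly $k$ piles, all of which must therefore be non-empty. Now let $x$ be non-terminal and let $l\ge k$ be its number of non-empty piles. I would simply choose any $k$ of these non-empty piles and empty them completely; this is a legal move. The resulting position has exactly $l-k$ non-empty piles, and since $l\le n\le 2k-1$ we get $l-k\le k-1<k$, so it is terminal.

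From this the conclusion is immediate and requires no induction. A terminal position has Grundy value $0$, so any non-terminal $x$ has an option of Grundy value $0$; hence $\G(x)=\operatorname{mex}(\cdots)\ge 1$, as $\operatorname{mex}$ of a set containing $0$ is positive (Lemma~\ref{SG}). Therefore the set of $0$-positions coincides with $V_T$, and every terminal position is a $(0,1)$-position, its mis\`ere value being $1$. In particular the game has no $(0,0)$-position, and by Theorem~\ref{SM=P} it is strongly miserable.

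I do not expect a real obstacle here; the only point demanding care is the boundary count $l-k\le k-1$, which is precisely where $n<2k$ (rather than $n=2k$) is invoked. It is worth contrasting this with the case $n=2k$ treated above, where non-terminal $0$-positions do exist (for instance all piles equal to $1$ once there are at least $2k$ of them), which is exactly why that case needed the finer analysis through Theorem~\ref{Mis.ss4}.
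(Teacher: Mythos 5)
Your proof is correct and takes essentially the same route as the paper's: both show that any non-terminal position can reach a terminal one by emptying $k$ non-empty piles (leaving at most $n-k\le k-1<k$ non-empty piles), conclude that the $0$-positions are exactly the terminal positions, hence there is no $(0,0)$-position, and then invoke Theorem~\ref{SM=P}. If anything, your version states the key counting step more carefully than the paper, which loosely writes $n-k=k-1$ and ``the terminal position'' where an inequality and ``a terminal position'' are meant.
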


\begin{proof}
Note that if $x = (x_1, \ldots, x_n)$ is a $\P$-position then $x$ is terminal.
Indeed, every non-terminal position is movable to the terminal position by eliminating
the piles $x_1, \ldots, x_k$ and, thus, leaving at most $n-k = k-1$ nonempty piles.
By definition, a positions with at most $k-1$ nonempty piles is terminal.

In other words, $\G(x) = 0$ if and only if $x$ is the terminal position,
which is also a $(0,1)$-position.
In particular, there are no $(0,0)$-position
and,  by Theorem \ref{SM=P}, the game is strongly miserable.
\end{proof}

Many games {\sc Nim}$_{n, = k}$ with $k < n/2$ are not even domestic.
For example, our computations show that
$(1,2,3,3,3)$  is a $(0,2)$-position of {\sc Nim}$_{5, = 2}$.

%%%%%%%%%%%%%%%%%%%%%%%%%%%%%%%%%%%%%%%%%%%%%%%%%%%%%%%%%%%%%%%%%%%%%%%%%%%%%

\subsubsection{{\sc Slow $k$-Nim}}
Let us now consider a modification of {\sc Nim}$_{n, \leq k}$ in which
a move consists of choosing at least one and at most $k$ from $n$ piles
and removing {\em exactly one} token from each of them.
The obtained game is denoted by  {\sc Nim}$^1_{n, \leq k}$;
% We denote this game by {\sc Nim}$^1_{n, \leq k}$.
it was analyzed in \cite{GurHo15-slow}.

Relations between the normal and mis\`ere versions are summarized by the
following statement.

\begin{proposition}
For $k \geq n-1$, the game of {\sc Slow $k$-Nim} is miserable. Moreover,
\begin{enumerate} \itemsep0em
\item [\rm{(i)}] if $k = n$, $V_{0,1} = \{(0,0,\ldots,0,2j) \mid j \in \Z_{\geq}\}$ and $V_{1,0} = \{(0,0,\ldots,0,2j+1) \mid j \in \Z_{\geq}\}$;
\item [\rm{(ii)}] if $k = n-1$, $V_{0,1} = \{(i,i,\ldots,i,i+2j) \mid i, j \in \Z_{\geq}\}$ and $V_{1,0} = \{(i,i,\ldots,i,i+2j+1) \mid i, j \in \Z_{\geq}\}$.
\end{enumerate}
\end{proposition}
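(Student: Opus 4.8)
The plan is to apply the characterization of miserable games given by Theorem \ref{Mis.ss4}, following the template of Propositions \ref{Euclid-M}, \ref{Wyt-M}, and \ref{NIMB-M}. In each of the two cases I would take $V'_{0,1}$ and $V'_{1,0}$ to be exactly the two sets named in the statement (for $k=n$, the positions with a single nonempty pile of even, resp.\ odd, size; for $k=n-1$, the positions whose smallest $n-1$ piles are equal and whose largest pile exceeds them by an even, resp.\ odd, amount), and then verify conditions (i)--(iv) together with M(v). Once these are checked, Theorem \ref{Mis.ss4} simultaneously gives that {\sc Slow $k$-Nim} is miserable and that $V_{0,1}=V'_{0,1}$, $V_{1,0}=V'_{1,0}$, which is precisely the explicit description claimed in (i) and (ii). Throughout I regard positions as sorted tuples, since the game is symmetric in the piles, and I keep track of the constraint that a legal move removes one token from a set $S$ of piles with $1\le|S|\le k$.

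Conditions (i)--(iv) are routine and I would dispose of them first. Condition (ii) is immediate, the terminal position $(0,\dots,0)$ being the element with $i=j=0$ of $V'_{0,1}$. For independence (i) and for (iii), (iv) the relevant moves are essentially forced: from a swap position a single removal either decreases the even/odd offset by one or, from an all-equal position, transfers the unique ``large'' pile from one side to the other, and in every case the move lands in the opposite swap class, never the same one. Concretely, when $k=n$ these are the single-pile moves $(0,\dots,0,t)\to(0,\dots,0,t-1)$, and when $k=n-1$ they are obtained by removing one token from the largest pile (when the offset is positive) or from $n-1$ of the equal piles (when the offset is $0$); in each case one checks that $|S|$ stays in the legal range.

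The substance of the proof, and the step I expect to be the main obstacle, is condition M(v): for every $x\notin V'_{0,1}\cup V'_{1,0}$ I must show that either $x$ cannot move into the swap set at all (property $(\mathfrak{b}')$) or it can move into \emph{both} $V'_{0,1}$ and $V'_{1,0}$ (property $(\mathfrak{c}')$); the situation to exclude is a position reaching exactly one of the two parities. The underlying mechanism is a parity flip: if some removal set $S$ sends $x$ into the swap set, then adjoining to $S$, or deleting from $S$, one carefully chosen pile produces a swap position whose offset has the opposite parity. For $k=n$ this is transparent. A position with two or more piles of size $\ge 2$ satisfies $(\mathfrak{b}')$, since no single move can vacate two such piles; otherwise $x$ has at most one large pile, and by choosing whether or not to also touch that large pile while emptying all the size-$1$ piles one reaches single-pile positions of two consecutive sizes, hence of both parities, giving $(\mathfrak{c}')$.

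For $k=n-1$ the same idea applies, but now the bound $|S|\le n-1$ forbids touching all $n$ piles at once, which is exactly the delicate point. The saving feature is that this bound also guarantees that every move spares at least one pile. I would argue that if $x\notin V'$ is movable into the swap set then---reading the landing position back through the single move---all pile sizes of $x$ lie in two consecutive values $\{v,v+1\}$ except for one exceptional pile of size $w$ or $w+1$ with $w\ge v$, and that the spared pile can be chosen so that additionally touching the exceptional pile (and, at the boundary $|S|=n-1$, reassigning which equal pile is treated as exceptional) flips the parity of the offset while keeping $|S|\le n-1$. The hypothesis $x\notin V'$ is what forces at least one of the equal piles to carry the smaller value $v$, which is precisely what makes the two reroutings land in genuinely different swap classes rather than coinciding. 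Assembling (i)--(iv) and M(v), Theorem \ref{Mis.ss4} then yields both assertions of the proposition.
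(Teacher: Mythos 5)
Your proposal is correct and takes exactly the paper's route: the paper also sets $V'_{0,1}$ and $V'_{1,0}$ to be precisely the two sets named in the statement and invokes Theorem \ref{Mis.ss4} to get both miserability and the identifications $V_{0,1}=V'_{0,1}$, $V_{1,0}=V'_{1,0}$. In fact the paper leaves the verification of conditions (i)--(iv) and M(v) entirely to the reader, so your parity-flip argument (modifying the removal set $S$ by adjoining or deleting one pile, within the bound $|S|\le k$) supplies more detail than the published proof does.
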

\begin{proof}
For $k = n$ and For $k = n-1$ let us respectively set

\medskip

$V'_{0,1} = \{(0,0,\ldots,0,2j) \mid j \in \Z_{\geq}\}$ and
$V'_{1,0} = \{(0,0,\ldots,0,2j+1) \mid j \in \Z_{\geq}\}$.

\medskip

$V'_{0,1} = \{(i,i,\ldots,i,i+2j) \mid i, j \in \Z_{\geq}\}$ and
$V'_{1,0} = \{(i,i,\ldots,i,i+2j+1) \mid  i, j \in \Z_{\geq}\}$.

\medskip

We leave to the reader to verify that
these two sets $V'_{0,1}$ and $V'_{1,0}$ satisfy all conditions of Theorem \ref{Mis.ss4}
and, hence, the game is miserable with $V_{0,1} = V'_{0,1}$ and $V_{1,0} = V'_{1,0}$.
\end{proof}

Our computations show that game  {\sc Nim}$^1_{4, \leq 2}$
is not domestic; for example,  $(1,1,2,3)$ is a $(4,0)$-positions.
Thus, case  $k = n - 2$  differs a lot from the case $k = n - 1$
corresponding to the Complementary {\sc Nim}.

%%%%%%%%%%%%%%%%%%%%%%%%%%%%%%%%%%%%%%%%%%%%%%%%%%%%%%%%%%%%%%%%%%%%%%%%%%%%%

\subsection{Heap overlapping {\sc Nim}}
\label{SS-HO-NIM}
The following generalization of {\sc Nim}
was introduced in \cite{BGHM15} and called {\sc HO-Nim},
where HO stands for ``Heap Overlapping".
Given a ground set $V$, a position of this game involves
a family of its subsets $\cH = \{H_1, \ldots, H_n\}$.
Furthermore, a move from this position consists of choosing
%$i \in [n] = \{1, \ldots, n\}$,
a non-empty subset  $S$  of some set  $H_i$,
deleting  $S \cap H_j$  from each  $H_j$, and getting thus
a new position  $\{H_1 \setminus S, \ldots, H_n \setminus S\}$.
Note that {\sc HO-Nim} ($\cH$) is the classic {\sc Nim}
whenever the subsets $H_i$ are pairwise disjoint.

In this subsection we construct examples
of domestic but not tame {\sc HO-Nim} games.
% that are not tame and some examples of non-domestic games.

%%%%%%%%%%%%%%%%%%%%%%%%%%%%%%%%%%%%%%%%%%%%%%%%%
%%%%%%%%%%%%%%%%%%%%%%%%%%%%%%%%%%%%%%%%%%%%%%%%%
%%%%%%%%%%%%%%%%%%%%%%%%%%%%%%%%%%%%%%%%%%%%%%%%%
%% C_n

\begin{definition} \label{Cn}
Given a ground set  $V$  partitioned by $n \geq 3$
pairwise disjoint subsets $V_1, \ldots, V_n$, let us set
%\begin{align*}
%\begin{cases}
$H_1 = V_1 \cup V_2, %\\
H_2 = V_2 \cup V_3, %\\
\ldots,
H_{n-1} = V_{n-1} \cup V_{n}, %\\
H_n = V_n \cup V_1,$
%\end{cases}
%\end{align*}
and $\cH = \{H_1, H_2, \ldots, H_n\}$.
% \end{definition}
We denote the corresponding position by
$(|V_1|, |V_2|, \ldots, |V_n|)$ and game by $\cH(C_n)$. \qed
\end{definition}

%For convenience, we denote positions of $\cH$
%based on $V_0, V_1, V_2, V_3, V_4, V_5$ using the form
%$(|V_0|; |V_1|, |V_2|, |V_3|, |V_4|, |V_5|)$ in which we separate $|V_0|$ by a semicolon.

\begin{proposition}
{\sc HO-Nim} $\cH(C_4)$ is miserable and forced.
{\sc HO-Nim} $\cH(C_5)$ is domestic but not tame.
{\sc HO-Nim} $\cH(C_6)$ is not domestic.
\end{proposition}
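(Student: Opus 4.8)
First I would translate a move of $\cH(C_n)$ into the language of the position vector $(|V_1|,\dots,|V_n|)$. Choosing $S\subseteq H_i=V_i\cup V_{i+1}$ and deleting it removes $a$ tokens from coordinate $i$ and $b$ tokens from coordinate $i+1$ (indices taken mod $n$), where $a,b\ge 0$ and $a+b\ge 1$; so a legal move decreases two \emph{cyclically adjacent} coordinates. The recurring tool is a pairing (mirror) strategy: two coordinates that are non-adjacent on $C_n$ lie in no common $H_i$, hence no single move touches both, so a position supported on such a pair with equal entries is a $\P$-position. More strongly, on an even cycle one can answer a move on an edge by the mirror move on the antipodal edge, which keeps all ``opposite-vertex'' coordinates equal.

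For $\cH(C_4)$ the antipodal-edge mirror gives the $\P$-positions exactly as $\{(x_1,x_2,x_3,x_4): x_1=x_3,\ x_2=x_4\}$ (the mirror preserves this invariant, and from any position violating it a single move restores it). I would then invoke Theorem~\ref{Mis.ss4} with the finite sets
\[
V'_{0,1}=\{(0,0,0,0),(1,0,1,0),(0,1,0,1)\},\qquad
V'_{1,0}=\{(1,0,0,0),(0,1,0,0),(0,0,1,0),(0,0,0,1)\}.
\]
Conditions (i)--(iv) are immediate; the content is condition M(v). Here one checks that any position able to reach $V'_{0,1}\cup V'_{1,0}$ in one move can reach \emph{both} sets: a position supported on a single edge reaches $(0,0,0,0)$ and some single-token position, while a position one move from $(1,0,1,0)$ or $(0,1,0,1)$ reaches that swap position and, by zeroing the appropriate edge, a single-token position. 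Theorem~\ref{Mis.ss4} then yields miserability with $V_{0,1}=V'_{0,1}$ and $V_{1,0}=V'_{1,0}$, and \emph{forcedness} is read off directly, since the only moves from $(1,0,1,0),(0,1,0,1)$ land in $V'_{1,0}$ and the only move from each single-token position lands in $V'_{0,1}$.

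For $\cH(C_5)$, non-tameness is the easy half: I would exhibit, via the recursive computation of $\G$ and $\G^-$, one explicit position that is a $(1,2)$- or $(2,1)$-position, which is forbidden in a tame game. Domesticity is the hard half and I would route it through Theorem~\ref{Dos.ss4}, producing sets $V'_{0,1},V'_{1,0},V'_{0,0}$ and verifying (i)--(vi) and D(vii). The obstacle is that $C_5$ is an odd cycle with no antipodal matching, so the $\P$-positions are \emph{not} merely equal non-adjacent pairs: for instance $(2,0,1,1,0)$ is already a $\P$-position. Thus the set $V'_{0,0}$ of $(0,0)$-positions and the set $V'_{1,0}$ of misère $\P$-positions must be captured by a genuine invariant, and checking D(vii) — equivalently, that no normal $\P$-position has misère value $\ge 2$ and no misère $\P$-position has normal value $\ge 2$ — is where essentially all the effort concentrates; this is the main difficulty of the whole proposition.

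Finally, for $\cH(C_6)$ I would certify non-domesticity by a single witness: a normal $\P$-position whose misère value is at least $2$ (a $(0,k)$-position with $k\ge 2$), located and verified by the recursive computation, exactly as ``$8$ is a $(0,2)$-position'' certifies that {\sc Mark} is not domestic. Note that the even cycle $C_6$ also has $\P$-positions beyond the opposite-vertex-equal ones (e.g.\ distance-$2$ equal pairs such as $(1,0,1,0,0,0)$), so the witness is most safely produced computationally rather than from a closed form. The common hard point across $C_5$ and $C_6$ is the same: for $n\ge 5$ the cycle game lacks a simple closed description of its $\P$-positions and misère $\P$-positions, so the structural claims must rest on a verified invariant or on a finite computation, whereas the clean antipodal structure of $C_4$ is what makes that case fully explicit.
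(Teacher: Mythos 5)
Your $\cH(C_4)$ argument is correct and is essentially the paper's own proof: you choose exactly the same sets, $V'_{0,1}=\{(0,0,0,0)\}\cup[(0,1,0,1)]$ and $V'_{1,0}=[(0,0,0,1)]$ (where $[\cdot]$ denotes the set of cyclic shifts), run them through Theorem~\ref{Mis.ss4}, and read off forcedness from the observation that every move from a position of $V'_{0,1}$ lands in $V'_{1,0}$ and every move from $V'_{1,0}$ lands on the terminal.

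The problem is with the other two claims, where your text is a plan rather than a proof. For $\cH(C_5)$ you correctly identify that domesticity must go through Theorem~\ref{Dos.ss4} and that the whole difficulty is to describe $V'_{0,0}$, $V'_{0,1}$, $V'_{1,0}$ by an explicit invariant --- but you then stop, saying such an invariant ``must be captured'' without producing it. That missing step \emph{is} the content of this part of the statement; without it nothing has been proved. The paper supplies it concretely: $V_{0,0}$ is the union of the cyclic classes $[(a,\,c+a,\,b+a,\,a,\,c+b+a)]$, $[(a,\,c+a,\,a,\,b+a,\,c+b+a)]$, $[(a,\,a,\,c+a,\,b+a,\,c+b+a)]$ over $a,b,c\in\Z_{\geq 0}$, together with $V'_{0,1}=\{(0,0,0,0,0),(1,1,1,1,1)\}\cup[(0,0,1,0,1)]$ and $V'_{1,0}=[(0,0,0,0,1)]\cup[(0,1,1,1,1)]$, and one then checks conditions (i)--(vi) and D(vii) of Theorem~\ref{Dos.ss4}. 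Likewise, both of your negative claims rest on witnesses you never exhibit: a concrete position of $\cH(C_5)$ violating tameness (the paper's is $(2,0,1,1,1)$, a $(5,1)$-position) and a concrete $(0,k)$- or $(k,0)$-position with $k\geq 2$ in $\cH(C_6)$ (the paper's is $(1,1,1,1,1,1)$, a $(0,2)$-position). Saying a witness ``would be located and verified by the recursive computation'' merely asserts the existence of exactly what the proposition claims; until the positions are named and their SG values checked, two of the three assertions remain unproved, so your proposal establishes only the $\cH(C_4)$ third of the proposition.
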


\begin{proof}
By symmetry, the positions
$(x_1, x_2, \ldots, x_n)$ and  $(x_2, x_3, \ldots, x_n, x_1)$ are equivalent.
We denote by $[(x_1, x_2, \ldots, x_n)]$  the set of positions
equivalent with  $(x_1, x_2, \ldots, x_n)$.

For $\cH(C_4)$, set $V'_{0,1} = \{(0,0,0,0)\} \cup [(0,1,0,1)]$ and  $V'_{1,0} = [(0,0,0,1)]$.
By Theorem \ref{Mis.ss4}, the game is miserable; moreover, $V'_{0,1} = V_{0,1}$ and $V'_{1,0} = V_{1,0}$.
Furthermore, every move from a position in $V_{1,0}$ ends in $(0,0,0,0)$, which is
the (unique) terminal, while every move from a position of $V_{0,1}$ terminates in $V_{1,0}$.
Hence, the game is forced.
Note that the $(0,0)$-positions of this game are
$\{(a,b,a,b) \mid a,b \in \Z_{\geq 0}, a+b \geq 2\}$.

For $\cH(C_5)$, direct computation shows that $x = (2,0,1,1,1)$  is a $(5,1)$-position.
Therefore, {\sc HO-Nim} $\cH(C_5)$ is not tame.
Let us show that $\cH(C_5)$ is domestic.

It can be easily verified that the set of $(0,0)$-positions is
\begin{align*}\label{eq-C5}
V_{0,0} =& [a, c+a, b+a, a, c+b+a] \cup [a, c+a, a, b+a, c+b+a] \\
         &\cup [a, a, c+a, b+a, c+b+a] \quad \text{ with } c,b,a \in \Z_{\geq 0}. % \notcg \\
%    &= \{[(0; 0, c, b, 0, c+b) + (0; a,a,a,a,a)]  \mid  c,b,a \in \Z_{\geq 0}\}.
\end{align*}
Let us set \begin{align*}
%\begin{cases}
V'_{0,0} & = V_{0,0}, \\
V'_{0,1} & = \{(0,0,0,0,0), (1,1,1,1,1)\} \cup [(0,0,1,0,1)], \\
V'_{1,0} &= [(0,0,0,0,1)] \cup [(0,1,1,1,1)].
%\end{cases}
\end{align*}
It is easily seen that three sets
$V'_{0,0}, V'_{0,1}$, and $V'_{1,0}$
satisfy conditions of Theorem \ref{Dos.ss4} and, thus, the game is domestic.

Game $\cH(C_6)$ is not domestic, since $(1,1,1,1,1,1)$ is a $(0,2)$-position in it.
\end{proof}

%%%%%%%%%%%%%%%%%%%%%%%%%%%%%%%%%%%%%%%%%%%%%%%%%
%%%%%%%%%%%%%%%%%%%%%%%%%%%%%%%%%%%%%%%%%%%%%%%%%
%%%%%%%%%%%%%%%%%%%%%%%%%%%%%%%%%%%%%%%%%%%%%%%%%
%% P_n
\begin{definition} \label{Kn}
Given a ground set  $V$  partitioned by $n \geq 3$ pairwise disjoint subsets $V_1, \ldots, V_n$, let us set
$H_i = V_{i} \cup V_{i+1}$ for $1 \leq i \leq n-1$ and
$\cH = \{H_i \mid 1 \leq i \leq n-1\}$.
% \end{definition}
We denote the corresponding position by
$(|V_1|, |V_2|, \ldots, |V_n|)$  and
the game by {\sc Ho-Nim} $\cH(P_n)$. \qed
\end{definition}

\begin{proposition}
{\sc Ho-Nim} $\cH(P_3)$ is miserable.
{\sc Ho-Nim} $\cH(P_4)$ and $\cH(P_5)$ are domestic but not tame.
{\sc Ho-Nim} $\cH(P_6)$  is not domestic.
\end{proposition}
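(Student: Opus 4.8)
The plan is to reduce all four assertions to finite combinatorial checks against the constructive characterizations of Theorems~\ref{Mis.ss4} and~\ref{Dos.ss4}, just as was done for the cycle games $\cH(C_n)$. First I would restate the moves of $\cH(P_n)$ in terms of the pile vector $(x_1,\ldots,x_n)$. Since $H_i=V_i\cup V_{i+1}$ and two distinct sets $H_i,H_j$ overlap only when $j=i+1$ (in $V_{i+1}$), deleting a nonempty $S\subseteq H_i$ subtracts $|S\cap V_i|$ from $x_i$ and $|S\cap V_{i+1}|$ from $x_{i+1}$. Hence a legal move amounts to choosing an index $i\in\{1,\ldots,n-1\}$ and decreasing the two consecutive piles $x_i,x_{i+1}$ by arbitrary amounts with positive total; in particular a move decreases either a single pile or two consecutive piles, and two piles that are not adjacent on the path can never be reduced simultaneously.

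For $\cH(P_3)$ I would apply Theorem~\ref{Mis.ss4} with the finite sets $V'_{0,1}=\{(0,0,0),(1,0,1)\}$ and $V'_{1,0}=\{(1,0,0),(0,1,0),(0,0,1)\}$. Conditions (i)--(iv) are immediate: neither set admits an internal move (passing from $(1,0,1)$ to $(0,0,0)$ would reduce the nonadjacent piles $1$ and $3$ at once, which is illegal), $V'_{0,1}$ contains the unique terminal $(0,0,0)$, and each listed position is visibly movable to the opposite set. The core is property M(v): for each $x$ I would show that if $x$ can reach $V'_{0,1}\cup V'_{1,0}$ at all then it reaches both $V'_{0,1}$ and $V'_{1,0}$. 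Since every target position has at most two nonzero piles, confined to a single edge, reachability of the target set forces $x$ into one of finitely many shapes, for each of which one checks directly that a second move lands in the other set; this is a short case split on which coordinates of $x$ already vanish.

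For $\cH(P_4)$ and $\cH(P_5)$ I would invoke Theorem~\ref{Dos.ss4}, supplying three disjoint sets $V'_{0,1},V'_{1,0},V'_{0,0}$, where $V'_{0,0}$ is an infinite parametrized family of ``balanced'' positions (analogous to the explicit $V_{0,0}$ obtained for $\cH(C_5)$) and $V'_{0,1},V'_{1,0}$ are small families of positions with entries in $\{0,1\}$. After verifying independence and the movability constraints (i)--(vi), the domesticity condition D(vii) is again a finite case analysis showing that every position satisfies one of $(\mathfrak{a}'),(\mathfrak{b}'),(\mathfrak{c}'),(\mathfrak{c_0}'),(\mathfrak{c_1}')$. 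The ``not tame'' half is settled by exhibiting, in each of the two games, a single witness position whose pair $(\G,\G^-)$ equals $(k,l)$ with $k\neq l$ and $\min(k,l)\ge 1$ (such as a $(1,2)$-position), verified by a direct finite computation of the normal and mis\`ere SG functions on the corresponding subgame $G_{[x]}$.

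Finally, for $\cH(P_6)$ it suffices to exhibit one position $x$ that is a $(0,k)$- or $(k,0)$-position with $k\ge 2$; by the $\cH(C_6)$ precedent a highly symmetric candidate such as $(1,1,1,1,1,1)$ is the natural first guess, and $\G(x),\G^-(x)$ are checked directly on $G_{[x]}$. This single counterexample defeats domesticity. The main obstacle throughout is guessing the correct abstract sets, above all the infinite family $V'_{0,0}$ for $P_4$ and $P_5$: once these are pinned down the characterization theorems reduce everything to bookkeeping, but identifying them requires understanding exactly which balanced configurations are simultaneously normal- and mis\`ere-$\P$-positions under the consecutive-pile move rule. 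The witness computations for the not-tame and not-domestic claims are routine but must be carried out explicitly, since no closed formula for $\G$ of these games is available.
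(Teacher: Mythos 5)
Your plan follows exactly the paper's route: certify miserability of $\cH(P_3)$ via Theorem~\ref{Mis.ss4}, certify domesticity of $\cH(P_4)$ and $\cH(P_5)$ via Theorem~\ref{Dos.ss4}, and refute tameness/domesticity by exhibiting witness positions with forbidden $(\G,\G^-)$ pairs. Your reformulation of the move rule (decrease two consecutive piles by arbitrary amounts with positive total) is correct, and your $P_3$ part is essentially complete: the sets $V'_{0,1}=\{(0,0,0),(1,0,1)\}$ and $V'_{1,0}=\{(1,0,0),(0,1,0),(0,0,1)\}$ agree with the paper's (whose displayed $V_{1,0}$ lists $(1,0,1)$ a second time, an evident typo for $(1,0,0)$).

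For the remaining three games, however, there is a genuine gap: you never produce the objects on which the entire argument rests, and you yourself flag this as ``the main obstacle.'' For $P_4$ and $P_5$ the whole mathematical content of the domesticity claim is the explicit identification of $V'_{0,0}$, $V'_{0,1}$, $V'_{1,0}$; the paper supplies them, e.g.\ for $P_4$: $V_{0,0}=\{[(a,b,0,a+b)]\mid a+b\geq 2\}$, $V_{0,1}=\{(0,0,0,0),(0,1,0,1),(1,0,1,0),(1,0,0,1)\}$, $V_{1,0}=\{(0,0,0,1),(0,0,1,0),(0,1,0,0),(1,0,0,0),(1,1,1,1)\}$, and for $P_5$ a union of three parametrized families for $V_{0,0}$ with side conditions $a+b=d+e$ and $f<h$. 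Describing $V'_{0,0}$ as ``an infinite family of balanced positions analogous to the $C_5$ case'' proves nothing until the family is written down and the conditions of Theorem~\ref{Dos.ss4} are checked against it. The same holds for the negative claims: the paper's witnesses are $(1,1,1,2)$, a $(5,1)$-position of $\cH(P_4)$; $(1,1,1,2,0)$, a $(5,1)$-position of $\cH(P_5)$; and $(1,0,1,1,1,2)$, a $(4,0)$-position of $\cH(P_6)$. You exhibit none of these, and your one concrete candidate for $P_6$, the symmetric position $(1,1,1,1,1,1)$, is the witness for the \emph{cycle} game $\cH(C_6)$; for the path game the paper resorts to the asymmetric $(1,0,1,1,1,2)$, which strongly indicates your first guess does not have the required SG pair, so as written that step would fail and the search would have to continue. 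In short: correct plan, same route as the paper, but the certificates and witnesses that constitute the proof are missing.
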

\begin{proof}

By Theorem \ref{Mis.ss4}, it can be checked that $\cH(P_3)$ is miserable with
\begin{align*}
%\begin{cases}
V_{0,1} & = \{(0,0,0), (1,0,1)\} , \\
V_{1,0} & = \{(0,0,1), (0,1,0), (1,0,1)\}.
%\end{cases}
\end{align*}
Moreover, $0,0$-positions form the set $\{(a,0,a) \mid a \in \Z^+, a \geq 2\}$.

By Theorem \ref{Dos.ss4} it can be checked that $\cH(P_4)$ is domestic with
\begin{align*}
%\begin{cases}
V_{0,0} & = \{[(a,b,0,a+b)] \mid a, b \in \Z^+, a + b \geq 2\}, \\
V_{0,1} & = \{(0,0,0,0), (0,1,0,1), (1,0,1,0), (1,0,0,1)\} , \\
V_{1,0} & = \{(0,0,0,1), (0,0,1,0), (0,1,0,0), (1,0,0,0), (1,1,1,1)\}.
\end{align*}
Yet, $\cH(P_4)$ is not tame, since $(1,1,1,2)$ is a $(5,1)$-position

Similarly, $\cH(P_5)$ is domestic with
\begin{align*}
%\begin{cases}
V_{0,0} & = [(a,b,c,0,a+b+c)] \cup [(a,b,0,d,e) \cup [(f,0,h,f,h)] \\
        & \text{ with } a+b = d+e, f < h, \\
V_{0,1} & = \{(0,0,0,0,0), [(1,0,1,0,0)]\} , \\
V_{1,0} & = \{(0,0,0,0,1),  (0,1,1,1,1)\}.
\end{align*}
Yet,  $\cH(P_5)$ is not tame, since $(1,1,1,2,0)$ is a $(5,1)$-position.

Finally, $\cH(P_6)$ is not domestic, since $(1, 0, 1, 1, 1, 2)$ is a $(4,0)$-position.
\end{proof}

Based on our calculations, we conjecture that
the family of domestic but not tame games is large;
for example, it contains the next two subfamilies.

% We introduce two more games that seem to be domestic but not tame based on our direct calculation.

\begin{definition} \label{Dom.Conj2}
Given a ground set  $V$  partitioned by four pairwise disjoint subsets $V_1, V_2, V_3, V_4$, let us set
$H_1 = V_1 \cup V_4, %\\
H_2 = V_2 \cup V_4, %\\
H_3 = V_3 \cup V_4, %\\
H_4 = V_1 \cup V_2 \cup V_3$, %\\
%\end{cases}
%\end{align*}
and $\cH = \{H_1, H_2, H_3, H_4\}$.
% \end{definition}
We denote the corresponding position by
$(|V_1|, |V_2|, |V_3|, |V_4|)$. \qed
\end{definition}

The game in Definition \ref{Dom.Conj2} is not tame: $(1,2,2,2)$ is a $(7,1)$-position.

\begin{definition} \label{Dom.Conj1}
Given a ground set  $V$  partitioned by five pairwise disjoint subsets $V_1, V_2, V_3, V_4$, let us set
$H_1 = V_1 \cup V_2 \cup V_5, %\\
H_2 =  V_3 \cup V_4 \cup V_5, %\\
H_3 =  V_1 \cup V_3 \cup V_5, %\\
H_4 =  V_2 \cup V_4$, %\\
%\end{cases}
%\end{align*}
and $\cH = \{H_1, H_2, H_3, H_4\}$.
% \end{definition}
We denote the corresponding position by
$(|V_1|, |V_2|, |V_3|, |V_4|, |V_5|)$. \qed
\end{definition}

The game in Definition \ref{Dom.Conj2} is not tame: $(1,1,1,1,1)$ is a $(1,5)$-position.

\section{Closing Remarks}
After mis\`{e}re play was considered by Grundy and Smith \cite{GS56} in 1956,
it is a commonplace that the SG theory for the  mis\`{e}re  play is much more  difficult
than for the normal play. The reason is as follows.
Although, by Remark \ref{normal-misere}, a simple transformation
of the digraph of a game allows to convert
the mis\`{e}re play in  $G$  to the normal play in  $G^-$
yet, a problem appears for the sums.
The mis\`{e}re play of a sum
$G^- = (G_1 + \cdots + G_n)^-$  differs
from the sum of the corresponding  mis\`{e}re games
$G' = G_1^- + \cdots + G_n^-$.
Indeed, by Remark \ref{normal-misere}, in the first case
we add one new terminal, and an extra move, to the whole sum, while
in the second case we add them to each game-summand.
Thus, in general, the SG functions  $\G$ and $\G^-$  may differ a lot.

The main goal of this paper is to outline cases when the above two functions are similar.
Although the SG theory is not directly applicable
to the  mis\`{e}re playing sums, in general, but
it is applicable, in case when each summand is pet, or miserable and forced, or
(a weaker requirement) tame and returnable.

This idea should be attributed to Bouton, who applied it
to {\sc Nim} as early as in 1901, long before the SG theory was developed.
The classical {\sc Nim} is the sum of  $n$  games, each of which
(the one-pile {\sc Nim}) is trivial. It is pet and forced.
For a pile of  $k$  tokens the normal SG function $\G(k) = k$, while
the  mis\`{e}re one  $\G^-(k) = k$  for  $k \geq 2$, but $\G^-(0) = 1$  and  $\G^-(1) = 0$.
Thus, there are only two swap positions:
$k=0$ is the $(0,1)$-position, and  $k=1$ is the $(1,0)$-position.
Each of them can be reached by one move from any non-swap, $(k,k)$,
position with  $k \geq 2$.
% Moreover, $k=0$ is the (only) terminal position, and
% from  $k=1$  there exists only one move and it leads to the  terminal  $k=0$.

{\sc Nim} is the sum of  $n$  such games and it has similar properties.
Namely, $x = (x_1, \ldots, x_n)$  is a swap swap position of {\sc Nim} if and only if
$x_i$  is  $0$  or  $1$  for every  $i \in [n] = \{1, \ldots, n\}$.
Furthermore,  $x$  is a $(0,1)$-position when the number of ones in  $x$  is even, and
$x$  is a $(1,0)$-position when this number is odd.

Given a non-swap position $x = (x_1, \ldots, x_n)$, obviously, a
swap position can be reached from  $x$  by one move if and only if
$x_i > 1$  for exactly one  $i \in [n]$.
But in this case, obviously, there is a move from  $x$  to
a $(0,1)$-  as well as another move to a  $(1,0)$-position.
Thus, {\sc Nim} is miserable
(and hence, tame) but it is not pet.
In a pet game a $(0,1)$-  as well as a  $(1,0)$-position is
reachable in one move from every non-swap position.

Moreover, {\sc Nim} is forced, since after a swap position is reached,
the  $(0,1)$-  and  $(1,0)$-positions alternate in any play,
since the number of piles containing one token will decrease one by one.
From these observations Bouton concluded that
the normal and mis\`{e}re plays of {\sc Nim} are similar:
the winning moves, if any, coincide in each position, unless
a swap position can be reached by one move.
Only in such (critical) position the player
should inquire which version, normal or mis\`{e}re, is actually played,
and then make a move to the swap position of the corresponding parity.

In fact, the same properties hold whenever
each game-summand is tame (not necessarily pet or miserable) and
returnable (not necessarily forced).
Surprisingly many games have these properties.
Let us recall, for example, the game Euclid.
%% see Section \ref{S.App}.
%%%, which is much more sophisticated than the one-pile {\sc Nim}; nevertheless, it
%%% is also miserable and forced \cite{Gur07}.
Its swap positions are the Fibonacci pairs
$(F_j, F_{j+1})$, which are $(0,1)$-  or  $(1,0)$-positions
if and only if  $j$  is even or odd, respectively.
There is only  one move from  $(F_j, F_{j+1})$  and  it leads to  $(F_j, F_{j-1})$.
Moreover, for every non-swap position either there is no move
to a swap one, or there is a move to an even Fibonacci pair,
as well as some other move to an odd one \cite{Gur07}.
Thus, the game Euclid is miserable and forced.

\smallskip

Every subtraction game is pet, as it was shown by Ferguson \cite{Fer74} in 1974;
all considered versions of Wythoff's games are miserable;
both are returnable but not forced; see Section \ref{S.App}.

\smallskip

Thus, the mis\`{e}re play of any (possibly, mixed) sum of the games mentioned above,
{\sc Nim}, Eucllid, or Wythoff,  is not more difficult than the normal play.

Let us note however that both may be difficult.
For example, no closed formula is known for the SG-function
of the standard Wythoff game or any of its versions considered in Section \ref{S.App},
but if such a formula, for the normal play, would be discovered,
it will immediately allow us to solve both the normal and mis\`{e}re play of a sum
that may include Wythoff-summands among others.

\smallskip

The sum is tame (resp., miserable, miserable and forced, miserable and returnable) whenever every summand is,
%%% tame (resp. miserable and forced).
in which case  $\G^-$  is simply equal to  $\G$  in all positions but swap ones.
Thus, the winning player makes a move to a $(0,0)$-position
from every positions, except a critical one, in which case
(s)he makes a move to a $(0,1)$ position of the sum.

\medskip

At the end of 19th century students usually played
the mis\`{e}re version of {\sc Nim}, which was considered standard.
So, this game was the goal of Bouton.
Yet, a nicer formula, so called {\sc Nim-sum},
describes the SG function of the {\em normal} version.
For this  reason, Bouton solved it first and then noticed
that solution of the standard (that is, mis\`{e}re) version
can be easily obtained from it, since the game of {\sc Nim} is miserable and forced.
Thus, in \cite{Bou901} Bouton introduced,
for the special case of {\sc Nim}, five fundamental
concepts of game and graph theories that appears in general only much later:
(i) the $\P$-positions, or in other words,
the {\em kernel} of an acyclic directed graph,
(ii) the SG function, (iii) the mis\`{e}re play, (iv, v) miserable and forced games.

%\bigskip
%{\bf References}

\end{document}